\DeclareMathAlphabet{\mathpzc}{OT1}{pzc}{m}{it} 
\newcommand\mathscr[1]{\scalebox{1.1}{$\mathpzc{#1}$}}
\newcommand{\underoverset}[3]{\underset{#1}{\overset{#2}{#3}}}
\newcommand{\gt}{>}
\newcommand{\boldpi}{\mbox{$\pi$\hspace{-6.3pt}$\pi$}}
\definecolor{darkblue}{rgb}{0.05,0.25,0.65}
\definecolor{greenii}{RGB}{20,140,10}
\definecolor{lightgray}{rgb}{0.9,0.9,0.9}
\definecolor{orangeii}{RGB}{200,100,5}
\newif\if@sup
\newtoks\@sups
\def\append@sup#1{\edef\act{\noexpand\@sups={\the\@sups #1}}\act}%
\def\reset@sup{\@supfalse\@sups={}}%
\def\mk@scripts#1#2{\if #2/ \if@sup ^{\the\@sups}\fi \else%
  \ifx #1_ \if@sup ^{\the\@sups}\reset@sup \fi {}_{#2}%
  \else \append@sup#2 \@suptrue \fi%
  \expandafter\mk@scripts\fi}
\def\tensor#1#2{\reset@sup#1\mk@scripts#2_/}
\def\multiscripts#1#2#3{\reset@sup{}\mk@scripts#1_/#2%
  \reset@sup\mk@scripts#3_/}
\newbox\slashbox \setbox\slashbox=\hbox{$/$}
\def\itex@pslash#1{\setbox\@tempboxa=\hbox{$#1$}
  \@tempdima=0.5\wd\slashbox \advance\@tempdima 0.5\wd\@tempboxa
  \copy\slashbox \kern-\@tempdima \box\@tempboxa}
\def\slash{\protect\itex@pslash}
\def\clap#1{\hbox to 0pt{\hss#1\hss}}
\def\mathllap{\mathpalette\mathllapinternal}
\def\mathrlap{\mathpalette\mathrlapinternal}
\def\mathclap{\mathpalette\mathclapinternal}
\def\mathllapinternal#1#2{\llap{$\mathsurround=0pt#1{#2}$}}
\def\mathrlapinternal#1#2{\rlap{$\mathsurround=0pt#1{#2}$}}
\def\mathclapinternal#1#2{\clap{$\mathsurround=0pt#1{#2}$}}
\let\oldroot\root
\def\root#1#2{\oldroot #1 \of{#2}}
\renewcommand{\sqrt}[2][]{\oldroot #1 \of{#2}}
\DeclareSymbolFont{symbolsC}{U}{txsyc}{m}{n}
\DeclareSymbolFont{stmry}{U}{stmry}{m}{n}
\DeclareFontFamily{OMX}{MnSymbolE}{}
\DeclareSymbolFont{mnomx}{OMX}{MnSymbolE}{m}{n}
\DeclareFontShape{OMX}{MnSymbolE}{m}{n}{
    <-6>  MnSymbolE5
   <6-7>  MnSymbolE6
   <7-8>  MnSymbolE7
   <8-9>  MnSymbolE8
   <9-10> MnSymbolE9
  <10-12> MnSymbolE10
  <12->   MnSymbolE12}{}
\theoremstyle{italics}
\newtheorem{theorem}{Theorem}[section]
\newtheorem{lemma}[theorem]{Lemma}
\newtheorem{prop}[theorem]{Proposition}
\newtheorem{cor}[theorem]{Corollary}
\newtheorem{question}[theorem]{Question}
\theoremstyle{definition}
\newtheorem{defn}[theorem]{Definition}
\newtheorem{notation}[theorem]{Notation}
\newtheorem{example}[theorem]{Example}
\newtheorem{remark}[theorem]{Remark}
\renewcommand{\emph}{\textit}
\begin{document}

\title{
  Fundamental weight systems are quantum states
}

\author{
  David Corfield, \quad Hisham Sati, \quad Urs Schreiber
}

\maketitle

\begin{abstract}
  Weight systems on chord diagrams play a central role
  in knot theory and Chern-Simons theory; and
  more recently in stringy quantum gravity.
  We highlight that the noncommutative algebra of horizontal chord diagrams is
  canonically a star-algebra, and ask which weight systems are
  positive with respect to this structure; hence we ask:
  Which weight systems are quantum states, if horizontal chord diagrams are
  quantum observables?
  We observe that the fundamental $\mathfrak{gl}(n)$-weight systems
  on horizontal chord diagrams with $N$ strands
  may be identified with the Cayley distance kernel
  at inverse temperature $\beta = \mathrm{ln}(n)$
  on the symmetric group on $N$ elements. In contrast to
  related kernels like the Mallows kernel, the positivity of the
  Cayley distance kernel had remained open.
  We characterize its phases of indefinite, semi-definite and definite
  positivity, in dependence of the inverse temperature $\beta$; and we prove
  that the Cayley distance kernel is positive (semi-)definite
  at $\beta = \mathrm{ln}(n)$ for all $n = 1,2,3, \cdots$.
  In particular, this proves that
  all fundamental $\mathfrak{gl}(n)$-weight systems
  are quantum states, and hence so are all their convex combinations.
  We close with briefly recalling
  how,
  under our
  ``Hypothesis H'',
  this result impacts on the identification of bound states of multiple M5-branes.
\end{abstract}

\medskip

\tableofcontents

\vfill

\newpage

\section{Introduction}

In investigations of problems in
string/M-theory (\cite{SS19b}, surveyed below in \cref{PhysicsInterpretation}),
we encountered the following
question at the interface of
quantum topology and
quantum probability theory:

\begin{question}
  \label{TheQuestion}
  Which weight systems on horizontal chord diagrams
  (Def. \ref{AlgebraOfAndWeightSystemsOnHorizontalChordDiagrams})
  are quantum states (Def. \ref{NotionOfWeightSystemsThatAreQuantumStates})
  with respect to the star-operation of reversal of strands
  (Prop. \ref{StarStructureOnHorizontalChordDiagrams})?
\end{question}

It is known
that all weight systems are {\it generated},
in a sense (\cite[Cor. 2.6]{BarNatan96}, review in \cite[\S 3.4]{SS19b}),
from
{\it Lie algebra weight systems}
$w_{(\mathfrak{g},\mathbf{V})}$
induced by a metric Lie module $(\mathfrak{g}, \mathbf{V})$
(\cite[\S 2.4]{BarNatan95}, review in \cite[\S 6]{CDM11}\cite[\S 3.3]{SS19b}),
and in fact from just those with $\mathfrak{g} := \mathfrak{gl}(n)$ and
$\mathbf{V} := \mathbf{n}$ the fundamental representation
(in the sense of \cite[Fact 7]{BarNatan96}).
Here we prove that all these {\it fundamental weight systems}
(Def. \ref{FundamentalWeightSystemOfgl2})
are quantum states:

\begin{theorem}
  \label{FundamentalglNWeightSystemsAreQuantumStates}
The fundamental $\mathfrak{gl}(n)$-weight systems $w_{(\mathfrak{gl}(n), \mathbf{n})}$
for $n \in \mathbb{N}_+$
are quantum states on horizontal chord diagrams on $N$ strands,
for all $N \in \mathbb{N}_+$; hence so are the
mixtures (Ex. \ref{Mixture}) of their operator images (Ex. \ref{OperatorStateCorrespondence}).

\end{theorem}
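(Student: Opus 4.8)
The plan is to reduce the claim, via the identification supplied earlier in the paper, to a single positivity statement about the Cayley distance kernel on the symmetric group, and then to recognize that kernel as the character of an \emph{honest} unitary representation. Since the set of quantum states is convex (a convex combination of normalized positive functionals is again normalized and positive), it suffices to treat one fundamental weight system $w_{(\mathfrak{gl}(n),\mathbf n)}$ at a time. By the definition of a quantum state (Def.~\ref{NotionOfWeightSystemsThatAreQuantumStates}) with respect to the strand-reversal star-operation (Prop.~\ref{StarStructureOnHorizontalChordDiagrams}), positivity of $w_{(\mathfrak{gl}(n),\mathbf n)}$ is equivalent to positive semi-definiteness of its moment matrix $\big[\,w_{(\mathfrak{gl}(n),\mathbf n)}(D_i^\ast D_j)\,\big]$ over a spanning set of horizontal chord diagrams. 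Since strand reversal corresponds group-theoretically to inversion, the identification of these moment values with the Cayley distance kernel $K_\beta(\sigma,\tau)=\exp(-\beta\, d_C(\sigma,\tau))$ at $\beta=\ln n$ collapses the whole theorem to the assertion that
\begin{equation*}
  \text{the } N!\times N! \text{ matrix } \big[\,n^{-d_C(\sigma,\tau)}\,\big]_{\sigma,\tau\in S_N} \text{ is positive semi-definite.}
\end{equation*}

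Next I would rewrite the kernel. The Cayley distance is bi-invariant and obeys $d_C(\sigma,\tau)=N-c(\sigma^{-1}\tau)$, where $c(\pi)$ denotes the number of cycles of $\pi$ (the minimal transposition length of $\pi$ being $N-c(\pi)$). Hence
\begin{equation*}
  n^{-d_C(\sigma,\tau)} \;=\; n^{-N}\, n^{\,c(\sigma^{-1}\tau)},
\end{equation*}
and as $n^{-N}>0$ it is enough to show that the class function $\pi\mapsto n^{c(\pi)}$ is a positive-definite function on $S_N$.

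The key observation is that $n^{c(\pi)}$ is exactly the trace of $\pi$ acting on $(\mathbb{C}^n)^{\otimes N}$ by permutation of tensor factors: in the standard product basis $e_{i_1}\otimes\cdots\otimes e_{i_N}$, a basis vector is fixed by $\pi$ precisely when its index tuple is constant along the cycles of $\pi$, and there are $n^{c(\pi)}$ such tuples. Thus $n^{c(\pi)}=\chi(\pi)$ is the character of the unitary representation $S_N \acts (\mathbb{C}^n)^{\otimes N}$, unitary for the tensor power of the standard Hermitian inner product since $S_N$ permutes an orthonormal basis. Characters of unitary representations are positive-definite: writing $\chi=\sum_k\langle e_k,\rho(-)e_k\rangle$ and using $\rho(\sigma)^\ast=\rho(\sigma^{-1})$, for any coefficients $c_\sigma\in\mathbb{C}$ one computes
\begin{equation*}
  \sum_{\sigma,\tau} \overline{c_\sigma}\,c_\tau\,\chi(\sigma^{-1}\tau)
  \;=\; \sum_k \Big\| \sum_\sigma c_\sigma\,\rho(\sigma)\,e_k \Big\|^2 \;\ge\; 0,
\end{equation*}
which is precisely the desired positive semi-definiteness.

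I expect the only genuinely substantive step to be the trace identity $n^{c(\pi)}=\mathrm{tr}_{(\mathbb{C}^n)^{\otimes N}}\rho(\pi)$ together with its repackaging as positive-definiteness — this is what turns an a~priori opaque combinatorial kernel into a manifestly Gram-type matrix, and it is the step where the special value $\beta=\ln n$ (making $e^{-\beta}=1/n$ an \emph{integer} reciprocal, i.e.\ a genuine dimension) is essential. Granting the identification with the Cayley kernel, everything else is routine: the bi-invariance and cycle formula for $d_C$, and the convexity of the state space. I would finally remark that the refinement between semi-definite and strictly definite positivity is governed by which irreducibles of $S_N$ occur in $(\mathbb{C}^n)^{\otimes N}$, i.e.\ by Schur--Weyl duality (only Young diagrams with at most $n$ rows appear), but this refinement is not needed for the quantum-state property itself.
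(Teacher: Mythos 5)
Your argument is correct, and its first half coincides with the paper's: the reduction of the quantum-state property to positive semi-definiteness of the Cayley distance kernel at $\beta=\mathrm{ln}(n)$ is exactly Prop.~\ref{FundamentalWeightSystemExpressedViaCayleyGraphDistance}, resting on Cor.~\ref{FundamentalWeightSystemInTermsOfCycles}, the star-compatibility of $\mathrm{perm}$ (Prop.~\ref{AssigningPermutationsToChordDiagramsIsStarHomomorphism}) and Cayley's formula (Lemma~\ref{CayleyDistanceFunctionInTermsOfCycles}); normalization is immediate since the empty diagram has $N$ cycles, giving $n^{-N}\cdot n^{N}=1$. Where you genuinely diverge is in the positivity step. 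The paper computes the full spectrum of the kernel via the character formula for Cayley-graph kernels (Prop.~\ref{CharacterFormulaForSpectrumOfCayleyDistanceKernel}): the eigenvalues are indexed by partitions $\lambda$ of $N$, and at $e^{\beta}=n$ each is exhibited, via the Frobenius character formula (Prop.~\ref{FrobeniusFormula}), as a positive multiple of the coefficient of $x^{N}$ in $s_{\lambda}(x,\dots,x,0,\dots,0)$ with $n$ nonzero arguments, hence non-negative by the semistandard-tableau definition of Schur polynomials (Lemma~\ref{NonNegativeEigenvaluesOfCayleyDistanceKernelAtLowIntegers}). You bypass the spectrum entirely: recognizing $n^{\#\mathrm{cycles}(\pi)}$ as the character of the unitary permutation action of $\mathrm{Sym}(N)$ on $(\mathbb{C}^{n})^{\otimes N}$ makes the kernel a Gram matrix of the vectors $\rho(\sigma)e_{k}$, and positivity is one line. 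The two arguments are two faces of the same representation-theoretic fact (Schur--Weyl duality decomposes $(\mathbb{C}^{n})^{\otimes N}$ into exactly the Specht modules whose eigenvalues the paper shows to be nonzero), but yours is shorter and needs no Schur-polynomial machinery. What the paper's route buys is the finer quantitative information it needs for its other results: the exact eigenvalues as functions of $\beta$, hence the distinction between the semi-definite phase at $e^{\beta}\in\{1,\dots,N-1\}$ (where the sign-representation eigenvalue vanishes, cf.~\eqref{SignOfEigenvalueOfSignatureDistribution}), the definite phase at $e^{\beta}\geq N$, and the indefinite intervals in between --- none of which is visible from the Gram-matrix argument alone, as you correctly note at the end.
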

\noindent This turns out to be a consequence of the following more general theorem in
geometric group theory:

\medskip

It is well-known that the value of a fundamental Lie algebra weight system
$w_{(\mathfrak{gl}(n), \mathbf{n})}$ depends only on the
permutation induced \eqref{MonoidHomomorphismFromHorizontalChordsToPermutations} by
a horizontal chord diagram
(\cite[Prop. 2.1]{BarNatan96}, review in \cref{WeightSystemsOnChordDiagrams} below).
More concretely, we highlight (Prop. \ref{FundamentalWeightSystemExpressedViaCayleyGraphDistance})
that
this is the special value
at inverse temperature $\beta = \mathrm{ln}(n)$
of the
{\it Cayley distance kernel} (Def. \ref{CayleyDistanceKernel})
on permutations of $N$ elements:
\vspace{-1mm}
$$
  \begin{tikzcd}
    \underset{
      \mathclap{
      \raisebox{-6pt}{
        \tiny
        \color{darkblue}
        \bf
        \begin{tabular}{c}
          fundamental
          \\
          weight system
        \end{tabular}
      }
      }
    }{
      w_{(\mathfrak{gl}(n), \mathbf{n})}
    }
    \quad
    \ar[
      rr,
      <->,
      "{
        \mbox{\tiny Prop. \ref{FundamentalWeightSystemExpressedViaCayleyGraphDistance}}
      }"
    ]
    &&
    \quad
    \underset{
      \mathclap{
      \raisebox{-3pt}{
        \tiny
        \color{darkblue}
        \bf
        \begin{tabular}{c}
          Cayley distance kernel at
          \\
          log-integral inverse temperature
        \end{tabular}
      }
      }
    }{
      \big[
        e^{- \beta \cdot d_C}
      \big]
      \;
      \scalebox{.8}{for $\beta = \mathrm{ln}(n)$}
    }
  \end{tikzcd}
$$

\vspace{-2mm}
\noindent
Positive (semi-)definite {\it kernels} are of interest notably
in geometric group theory (e.g. \cite[\S 2.11]{DrutuKapovich18})
and in machine learning (e.g. \cite{HSS07}\cite{MairalVert17}).
While related kernels have recently been proven to be
positive-definite \cite{JiaoVert18}, the archetypical Cayley distance kernel
was known to become indefinite at sufficiently low inverse temperature $\beta$,
while its general behavior with $\beta$ had remained unknown (Rem. \ref{ReferencesOnCayleyDistanceKernel}).
Here we prove:
\begin{theorem}[Phases of the Cayley distance kernel]
\label{PhasesOfTheCayleyDistanceKernel}
The Cayley distance kernel $e^{- \beta \cdot d_C}$
on the symmetric group on $N$ elements is:
$$
  \left\{
  \def\arraystretch{1.4}
  \begin{array}{lclc}
    \mbox{indefinite}
      & \mbox{for} &
    e^{\beta} \in [1,N-1]
      \setminus
    \mathrlap{
      \{1, 2, \cdots, N-1\}
    }
    \\
    \mbox{positive semi-definite}
      &\mbox{for}&
    e^\beta \in \{1, 2, \cdots, N-1\}
    &
    \multirow{2}{*}{$
      \left.
        \mathclap{
        \begin{array}{c}
          {\phantom{A}}
          \\
          {\phantom{A}}
        \end{array}
        }
      \right\}
      \;\;\Rightarrow\!\!\!
      \mbox{
        \def\arraystretch{.8}
        {\begin{tabular}{l}
          fundamental weight systems
          \\
          $w_{(\mathfrak{gl}(n), \mathbf{n})}$ are quantum states
        \end{tabular}}
      }
    $}
    \\
    \multirow{2}{*}{
      \mbox{positive definite}
    }
    &
    \multirow{2}{*}{
      \mbox{for}
      $
      \mathrlap{
      \left\{
         \begin{array}{c}
           \mathclap{\phantom{A}}
           \\
           \mathclap{\phantom{A}}
         \end{array}
      \right.
      }
      $
    }
    & e^\beta \in \{N, N+1, N+2, \cdots\}
    \\
    &&
    e^{\beta} \gt N - 1
    \,.
  \end{array}
  \right.
$$
\end{theorem}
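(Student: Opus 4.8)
\medskip
\noindent\textbf{Proof proposal.}
The plan is to diagonalize the kernel by harmonic analysis on the symmetric group $S_N$ and then read the three phases off an explicit product formula for its eigenvalues. Since the Cayley distance satisfies $d_C(\sigma,\tau) = N - c(\sigma^{-1}\tau)$, where $c(\pi)$ is the number of cycles of $\pi$, the matrix entry is $K_{\sigma\tau} = f(\sigma^{-1}\tau)$ for the class function $f(\pi) = e^{-\beta N}\,(e^\beta)^{c(\pi)}$. A real class-function convolution matrix $\big[f(\sigma^{-1}\tau)\big]$ on a finite group is simultaneously block-diagonalized by the irreducible representations (finite-group Bochner/Peter--Weyl), and by Schur's lemma the block indexed by the irrep $\lambda$ is the scalar $\mu_\lambda = \tfrac{1}{d_\lambda}\sum_{\pi} f(\pi)\,\chi_\lambda(\pi)$ times the identity. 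Hence $K$ is positive (semi-)definite precisely when every $\mu_\lambda$ is positive (nonnegative), and the problem reduces to the signs of the $\mu_\lambda$.

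First I would compute these eigenvalues via the classical content-product identity
\[
  \sum_{\pi \in S_N} x^{\,c(\pi)}\,\chi_\lambda(\pi)
  \;=\;
  d_\lambda \prod_{(i,j)\in\lambda}\big(x + j - i\big),
\]
which follows, e.g., from the Jucys--Murphy elements acting on the irrep $\lambda$ with the cell contents $j-i$ as eigenvalues. Setting $x = e^\beta =: q$ gives $\mu_\lambda = e^{-\beta N}\prod_{(i,j)\in\lambda}(q + j - i)$, so that up to the positive factor $e^{-\beta N}$ the sign of each eigenvalue is the sign of a product whose factors vanish or turn negative exactly as $q$ drops below the negated contents, which range over $\{-(N-1),\ldots,N-1\}$. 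I expect this identity to be the main technical input; everything after it is elementary sign bookkeeping.

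For the \emph{definite} regime $q > N-1$, every factor obeys $q + (j-i) \ge q - (N-1) > 0$, so all $\mu_\lambda > 0$ and $K$ is positive definite; in particular integer $q = n \ge N$ is covered. (At integer $q=n$ this is conceptually the Gram matrix of the tensor-permutation operators $P_\sigma$ on $(\mathbb{C}^n)^{\otimes N}$ under the Hilbert--Schmidt product, since $\mathrm{tr}(P_{\sigma^{-1}\tau}) = n^{c(\sigma^{-1}\tau)}$; positivity is then automatic and strictness is equivalent to linear independence of the $P_\sigma$, i.e.\ to $n \ge N$ by Schur--Weyl.) For \emph{integer} $q \in \{1,\ldots,N-1\}$ a factor can be negative only at a cell of content $\le -q-1$, but in a Young diagram the cell directly above it has content one larger, so inductively the partition must also contain a cell of content exactly $-q$, forcing a zero factor; hence no product is negative, while the single column $(1^N)$ does contain content $-q$ and so vanishes: $K$ is positive semi-definite but not definite. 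For \emph{non-integer} $q \in (1,N-1)$ no factor can vanish, and the hook $\lambda = (N-\lceil q\rceil,\,1^{\lceil q\rceil})$ has exactly one negative factor (at content $-\lceil q\rceil$) whereas the row $(N)$ has an all-positive product, so $K$ has eigenvalues of both signs: indefinite.

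Finally, the explicit threshold $q > \tfrac{N-1}{\sqrt[N]{2}-1}$ is equivalent to $\big(1+\tfrac{N-1}{q}\big)^N < 2$, which I would establish independently by Gershgorin's circle theorem: the diagonal entries are $1$ while each off-diagonal row sum equals $\prod_{k=0}^{N-1}\!\big(1+\tfrac{k}{q}\big) - 1 \le \big(1+\tfrac{N-1}{q}\big)^N - 1 < 1$, giving strict diagonal dominance and hence positive definiteness with no character theory at all --- a robust check valid for every real $\beta$ in that range. Thus the only genuine obstacle is the content-product eigenvalue identity; granted it, all three phases, and the weight-system corollary at $q=n$, follow from the sign analysis above.
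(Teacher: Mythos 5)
Your proposal is correct, and for the core of the theorem it takes a genuinely different --- and in fact stronger --- route than the paper. Both arguments begin from the same block-diagonalization of the class-function kernel by irreducible characters (Prop.~\ref{CharacterFormulaForSpectrumOfCayleyDistanceKernel}), and both obtain the explicit threshold $e^\beta > (N-1)/(\sqrt[N]{2}-1)$ by literally the same Gershgorin-radius computation. The divergence is in how the signs of the eigenvalues $\mu_\lambda$ are controlled. The paper only ever evaluates a product formula for the two one-dimensional characters (Lemmas~\ref{CuriousLemma} and~\ref{CuriousLemmaSigned}, giving $\prod_k(e^\beta+k)$ and $\prod_k(e^\beta-k)$); the sign character supplies a negative eigenvalue on alternating intervals, which is propagated to all intervals by Cauchy interlacing between $\mathrm{Sym}(N)$ and $\mathrm{Sym}(N-1)$ (Lemma~\ref{CauchyInterlace}, Prop.~\ref{CayleyDistanceKernelIndefiniteAtNonIntegerExpInverseTemperatureBelowNMinusOne}); non-negativity at integer $e^\beta=n$ is instead read off from Frobenius' formula, which exhibits $\mu_\lambda$ as a manifestly non-negative coefficient of the Schur polynomial $s_\lambda(1^n,0^{N-n})$ (Lemma~\ref{NonNegativeEigenvaluesOfCayleyDistanceKernelAtLowIntegers}), with definiteness at $n\geq N$ again pulled down by interlacing. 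Your single input, the Jucys--Murphy content-product identity $\mu_\lambda \propto \prod_{\square\in\lambda}\big(e^\beta + c(\square)\big)$, subsumes both of the paper's lemmas as the special cases $\lambda=(N)$ and $\lambda=(1^N)$, and is the hook-content/principal-specialization formula continued to all real $e^\beta$; it is classical and your one-line derivation of it is sound. Granted it, your sign bookkeeping is complete: the column-convexity argument correctly forces a vanishing factor whenever a negative one occurs at integer $n\leq N-1$, and the hook $(N-\lceil q\rceil,1^{\lceil q\rceil})$ is a valid explicit witness of a negative eigenvalue at non-integer $q\in(1,N-1)$. What your route buys is uniformity and sharpness: it dispenses with both interlacing steps, and it proves positive definiteness for \emph{all} $e^\beta>N-1$, not merely for integers $\geq N$ and for $e^\beta$ above the loose Gershgorin bound --- thereby settling the sharper phase boundary that the paper only conjectures from computer experiments in the remark following Prop.~\ref{LowerBoundEnsuringPositivity}. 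What the paper's route buys is that it needs only textbook symmetric-function facts and a tableau-counting interpretation of the integer-point eigenvalues, avoiding the Jucys--Murphy machinery; your parenthetical Gram-matrix/Schur--Weyl observation at integer $n$ is likewise a valid independent proof of positive semi-definiteness there.
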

\begin{proof}
  This is the content of
  Prop. \ref{CayleyDistanceKernelIndefiniteAtNonIntegerExpInverseTemperatureBelowNMinusOne},
  Prop. \ref{CayleyDistanceKernelPositiveSemiDefiniteAtLowIntegers},
  and
  Prop. \ref{CayleyDistanceKernelPositiveDefiniteForSufficientlyLargeBeta} below.
\end{proof}

\hypertarget{Figure1}{}
\begin{tabular}{cc}
\hspace{-.9cm}
\begin{minipage}[left]{7.2cm}
For illustration, the blue graph in
\hyperlink{Figure1}{Figure 1}
shows, vertically, the value of the
smallest eigenvalue
(rescaled by $e^{3 \beta}$, for visibility)
of the Cayley distance kernel on the symmetric group
$\mathrm{Sym}(4)$,
in dependence of the exponentiated
inverse temperature $e^\beta$ (running horizontally).
This means: where the graph is negative/zero/positive, the Cayley distance kernel
is indefinite/positive semi-definite/positive definite, respectively.
See also \cite{BarNatan21} for
more such computer algebra analysis of the situation.
\end{minipage}
&
\hspace{-.3cm}
\raisebox{0pt}{
\begin{tabular}{l}
\begin{tikzpicture}[scale=(1.2)]
\draw (0,0) node {
  \includegraphics[width=.48\textwidth]{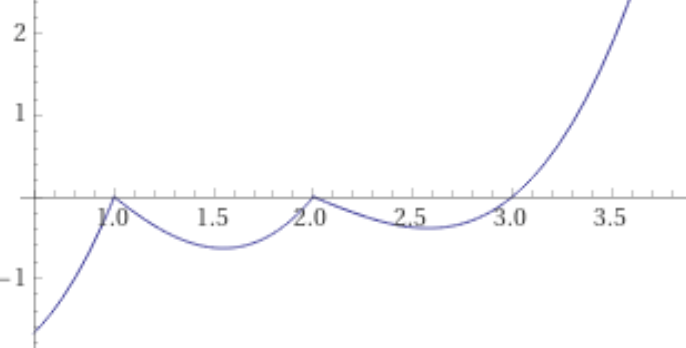}
};
\draw (-3,1.6) node
  {\scalebox{.8}{
  $\mathrlap{
     \mathrm{min}\big(\mathrm{EigVals}[e^{- \beta \cdot d_C}]\big)
     \cdot e^{3\beta}
   }
  $}};
\draw (3.8,-.24) node {\scalebox{.8}{
  $
    e^\beta
  $}};
\draw (-.315,.08) to (-.315,-.17);
\node at (-.315, .3) [rectangle,draw] {\scalebox{.6}{
  $\!\!\!\!w_{(\mathfrak{gl}(2), \mathbf{2} )}\!\!\!\!$
}};

\begin{scope}[shift={(-2.02,0)}]
\draw (-.315,.08) to (-.315,-.17);
\node at (-.315, .3) [rectangle,draw] {\scalebox{.6}{
  $\!\!\!\!w_{(\mathfrak{gl}(1), \mathbf{1} )}\!\!\!\!$
}};
\end{scope}

\begin{scope}[shift={(2.05,0)}]
\draw (-.315,.08) to (-.315,-.17);
\node at (-.315-.2, .3) [rectangle,draw] {\scalebox{.6}{
  $\!\!\!\!w_{(\mathfrak{gl}(3), \mathbf{3} )}\!\!\!\!$
}};
\end{scope}
\end{tikzpicture}
\\
\footnotesize
{\bf Figure 1.} Smallest eigenvalue of the Cayley distance kernel
on $\mathrm{Sym}(4)$.
\end{tabular}
}
\end{tabular}

\bigskip

\noindent {\bf Acknowledgements.}
We thank
Abdelmalek Abdesselam,
Dror Bar-Natan,
Carlo Collari,
Jean-Philippe Vert
and
David Speyer
for comments and discussion.

\medskip

\newpage

\section{Weights, states and kernels}
\label{Definitions}

We briefly recall relevant definitions and facts,
and then point out some close relations between the following topics:

- \cref{WeightSystemsOnChordDiagrams}  -- Weight systems on chord diagrams.

- \cref{QuantumStatesOnQuantumObservableAlgebras} -- Quantum states on quantum observable algebras.

- \cref{GeometricGroupTheory} -- Cayley distance kernels on symmetric groups.

\medskip

\noindent
\begin{notation}
  \label{TheParameters}
  Throughout we consider the following parameters:

  \vspace{-.1cm}
  \begin{enumerate}[{\bf (i)}]
  \vspace{-.2cm}
  \item
  $N \in \mathbb{N}_+$
  the number of strands in horizontal chord diagrams (Def. \ref{AlgebraOfAndWeightSystemsOnHorizontalChordDiagrams}), equivalently:
  \\
  \phantom{$N \in \mathbb{N}_+$}
  the number of elements on which permutations act;

  \vspace{-.2cm}
  \item
  $n \in \mathbb{N}_+\;$
  labels
  the fundamental weight systems $w_{(\mathfrak{gl}(n), \mathbf{n})}$ (Def. \ref{FundamentalWeightSystemOfgl2});

  \vspace{-.2cm}
  \item
  $\beta \in \mathbb{R}_{\geq 0}$
  an inverse temperature parameter (Def. \ref{CayleyDistanceKernel}),
  \\
  \phantom{$\beta \in \mathbb{R}_{\geq 0}$}
  often specialized to $\beta  = \mathrm{ln}(n)$ (Prop. \ref{FundamentalWeightSystemExpressedViaCayleyGraphDistance}).
\end{enumerate}
\end{notation}
The ground field is the complex numbers $\mathbb{C}$, in which
the complex conjugate of $z$ is denoted $\bar z$.

\subsection{Weight systems on chord diagrams}
\label{WeightSystemsOnChordDiagrams}

\begin{defn}[Horizontal chord diagrams and weight systems
({\cite{BarNatan96}\cite[\S 5.11]{CDM11}\cite[\S 3.1]{SS19b}})]
\label{AlgebraOfAndWeightSystemsOnHorizontalChordDiagrams}
$\,$

\noindent
{\bf (i)}
The {\it monoid of horizontal chord diagrams} is the free monoid
on the set of pairs of distinct strands
\begin{equation}
  \label{SetOfHorizontalChordDiagrams}
  \mathcal{D}^{{}^{\mathrm{pb}}}_{\!N}
  \;\coloneqq\;
  \mathrm{FreeMonoid}
  \Big(
    \big\{
      (ij)
      \,\vert\,
      1 \leq i < j \leq N
    \big\}
  \Big)
  \,,
\end{equation}
where the generator $(i j)$ is called the
{\it chord connecting the $i$th and $j$th strand}.
Hence a general horizontal chord diagram is a finite list of chords
\begin{equation}
  \label{HorizontalChordDiagramAsListOfChords}
  D
  \;=\;
  (i_1 j_1)
  (i_2 j_2)
  \cdots
  (i_d j_d)
  \,,
\end{equation}
possibly empty, and the product operation ``$\cdot$" on horizontal chord diagrams
is concatenation of these lists, the neutral element being given by the empty list.
For example:

\begin{equation}
  \label{MonoidOfHorizontalChordDiagrams}
 (ik) \cdot (i j)
 \;=\;
 (ik)(ij)
\phantom{AAAAA}
\begin{array}{c}
\left[
\!\!
\mbox{
\raisebox{-21pt}{
\begin{tikzpicture}

 \draw (-.23, .6) node {\tiny $i$};
 \draw (-.23+1.2, .6) node {\tiny $j$};
 \draw (-.23+2.4, .6) node {\tiny $k$};

 \clip (-1,+.5+1.5) rectangle (3.5,+.5);

 \draw[thick, orangeii]
   (0,+1)
   to
   node{\colorbox{white}{\hspace{-2pt}}}
   (2.4,+1);
 \begin{scope}[shift={(0,+1)}]
   \clip (0,-.1) rectangle (.1,+.1);
   \draw[draw=orangeii, fill=orangeii] (0,0) circle (.07);
 \end{scope}
 \begin{scope}[shift={(2.4,+1)}]
   \clip (-.1,-.1) rectangle (0,+.1);
   \draw[draw=orangeii, fill=orangeii] (0,0) circle (.07);
 \end{scope}

 \draw[thick, orangeii]
   (0,0)
   to
   (1.2,0);
 \begin{scope}[shift={(0,0)}]
   \clip (0,-.1) rectangle (.1,+.1);
   \draw[draw=orangeii, fill=orangeii] (0,0) circle (.07);
 \end{scope}
 \begin{scope}[shift={(1.2,0)}]
   \clip (-.1,-.1) rectangle (0,+.1);
   \draw[draw=orangeii, fill=orangeii] (0,0) circle (.07);
 \end{scope}

 \draw[ultra thick, darkblue] (0,2) to (0,-1);
 \draw[ultra thick, darkblue] (1.2,2) to (1.2,-1);
 \draw[ultra thick, darkblue] (2.4,2) to (2.4,-1);

 \draw (-.6,1.5) node {$\cdots$};
 \draw (0.6,1.5) node {$\cdots$};
 \draw (1.2+0.6,1.5) node {$\cdots$};
 \draw (2.4+0.6,1.5) node {$\cdots$};

 \draw (-.6,-.5) node {$\cdots$};
 \draw (0.6,-.5) node {$\cdots$};
 \draw (1.2+0.6,-.5) node {$\cdots$};
 \draw (2.4+0.6,-.5) node {$\cdots$};

\end{tikzpicture}
}
}
\!\!
\right]
\\
\cdot
\\
\left[
\!\!
\mbox{
\raisebox{-18pt}{
\begin{tikzpicture}

 \draw (-.23, 0.35) node {\tiny $i$};
 \draw (-.23+1.2, 0.35) node {\tiny $j$};
 \draw (-.23+2.4, 0.35) node {\tiny $k$};

 \clip (-1,+.5-1.5) rectangle (3.5,+.5);

 \draw[thick, orangeii]
   (0,+1)
   to
   node{\colorbox{white}{\hspace{-2pt}}}
   (2.4,+1);
 \begin{scope}[shift={(0,+1)}]
   \clip (0,-.1) rectangle (.1,+.1);
   \draw[draw=orangeii, fill=orangeii] (0,0) circle (.07);
 \end{scope}
 \begin{scope}[shift={(2.4,+1)}]
   \clip (-.1,-.1) rectangle (0,+.1);
   \draw[draw=orangeii, fill=orangeii] (0,0) circle (.07);
 \end{scope}

 \draw[thick, orangeii]
   (0,0)
   to
   (1.2,0);
 \begin{scope}[shift={(0,0)}]
   \clip (0,-.1) rectangle (.1,+.1);
   \draw[draw=orangeii, fill=orangeii] (0,0) circle (.07);
 \end{scope}
 \begin{scope}[shift={(1.2,0)}]
   \clip (-.1,-.1) rectangle (0,+.1);
   \draw[draw=orangeii, fill=orangeii] (0,0) circle (.07);
 \end{scope}

 \draw[ultra thick, darkblue] (0,2) to (0,-1);
 \draw[ultra thick, darkblue] (1.2,2) to (1.2,-1);
 \draw[ultra thick, darkblue] (2.4,2) to (2.4,-1);

 \draw (-.6,1.5) node {$\cdots$};
 \draw (0.6,1.5) node {$\cdots$};
 \draw (1.2+0.6,1.5) node {$\cdots$};
 \draw (2.4+0.6,1.5) node {$\cdots$};

 \draw (-.6,-.5) node {$\cdots$};
 \draw (0.6,-.5) node {$\cdots$};
 \draw (1.2+0.6,-.5) node {$\cdots$};
 \draw (2.4+0.6,-.5) node {$\cdots$};

\end{tikzpicture}
}
}
\!\!
\right]
\end{array}
\;\;\;=\;\;\;
\left[
\!\!
\mbox{
\raisebox{-40pt}{
\begin{tikzpicture}

 \draw (-.23, 1.9) node {\tiny $i$};
 \draw (-.23+1.2, 1.9) node {\tiny $j$};
 \draw (-.23+2.4, 1.9) node {\tiny $k$};

 \draw[thick, orangeii]
   (0,+1)
   to
   node{\colorbox{white}{\hspace{-2pt}}}
   (2.4,+1);
 \begin{scope}[shift={(0,+1)}]
   \clip (0,-.1) rectangle (.1,+.1);
   \draw[draw=orangeii, fill=orangeii] (0,0) circle (.07);
 \end{scope}
 \begin{scope}[shift={(2.4,+1)}]
   \clip (-.1,-.1) rectangle (0,+.1);
   \draw[draw=orangeii, fill=orangeii] (0,0) circle (.07);
 \end{scope}

 \draw[thick, orangeii]
   (0,0)
   to
   (1.2,0);
 \begin{scope}[shift={(0,0)}]
   \clip (0,-.1) rectangle (.1,+.1);
   \draw[draw=orangeii, fill=orangeii] (0,0) circle (.07);
 \end{scope}
 \begin{scope}[shift={(1.2,0)}]
   \clip (-.1,-.1) rectangle (0,+.1);
   \draw[draw=orangeii, fill=orangeii] (0,0) circle (.07);
 \end{scope}

 \draw[ultra thick, darkblue] (0,2) to (0,-1);
 \draw[ultra thick, darkblue] (1.2,2) to (1.2,-1);
 \draw[ultra thick, darkblue] (2.4,2) to (2.4,-1);

 \draw (-.6,1.5) node {$\cdots$};
 \draw (0.6,1.5) node {$\cdots$};
 \draw (1.2+0.6,1.5) node {$\cdots$};
 \draw (2.4+0.6,1.5) node {$\cdots$};

 \draw (-.6,-.5) node {$\cdots$};
 \draw (0.6,-.5) node {$\cdots$};
 \draw (1.2+0.6,-.5) node {$\cdots$};
 \draw (2.4+0.6,-.5) node {$\cdots$};

\end{tikzpicture}
}}
\!\!
\right]
\end{equation}

\noindent
{\bf (ii)}
The map that sends the chord
$(i j)$
to the permutation transposing the $i$th and $j$th of $N$
ordered elements
\begin{equation}
  \label{ATransposition}
  t_{i j}
  \;\coloneqq\;
  \left(
  \begin{array}{ccccccccc}
    1 & 2 & \cdots & i & \cdots & j & \cdots & N - 1 & N
    \\
    1 & 2 & \cdots & j & \cdots & i & \cdots & N -1  & N
  \end{array}
  \right)
\end{equation}
extends uniquely
to a monoid homomorphism from horizontal chord diagrams
\eqref{SetOfHorizontalChordDiagrams} to the symmetric group on $N$
elements:
\begin{equation}
  \label{MonoidHomomorphismFromHorizontalChordsToPermutations}
  \begin{tikzcd}[row sep=0pt]
    \mathcal{D}^{{}^{\mathrm{pb}}}_{\!N}
    \ar[
      rr,
      "\mathrm{perm}"
    ]
    &&
    \mathrm{Sym}(N)
    \\
    (i_1 j_1)
    \cdots
    (i_d j_d)
    &
      \longmapsto
    &
    t_{i_1 j_1}
    \circ
      \cdots
    \circ
    t_{i_d j_d}
  \end{tikzcd}
\end{equation}
A crucial role in the following discussion is played by the number of cycles
in the permutation underlying a chord diagram:
\vspace{-2mm}
\begin{equation}
  \label{CyclesInPermutationCorrespondingToChordDiagram}
    \xymatrix@C=50pt{
      \overset{
        \mathclap{
        \mbox{
          \tiny
          \color{darkblue}
          \bf
          \begin{tabular}{c}
            set of
            \\
            horizontal
            chord diagrams
            \\
            with $N$ strands
            \\
            $\phantom{a}$
          \end{tabular}
        }
        }
      }{
        \mathcal{D}^{{}^{\mathrm{pb}}}_{N}
      }
      \ar[rr]^-{ \mathrm{perm} }_-{
        \mathclap{
        \mbox{
          \tiny
          \color{greenii}
          \bf
          \begin{tabular}{c}
            take chord $t_{i j}$
            \\
            to transposition $i \leftrightarrow j$
            \\
            and consecutive chords to
            \\
            composition of transpositions
          \end{tabular}
        }
        }
      }
      &&
      \overset{
        \mathclap{
        \mbox{
          \tiny
          \color{darkblue}
          \bf
          \begin{tabular}{c}
            set of
            \\
            permutations of
            \\
            $N$ elements
            \\
            $\phantom{a}$
          \end{tabular}
        }
        }
      }{
        \mathrm{Sym}(N)
      }
      \ar[rr]^-{ \#\mathrm{cycles} }_-{
        \mbox{
          \tiny
          \color{greenii}
          \bf
          \begin{tabular}{c}
            number of
            \\
            cycles (orbits)
            \\
            of a permutation
          \end{tabular}
        }
      }
      &&
      \{1,\cdots, N\}
      \subset
      \mathbb{N}
    }
\end{equation}

\noindent
{\bf (iii)}
The {\it algebra of horizontal chord diagrams}
\begin{equation}
  \label{AlgebraOfHorizontalChordDiagrams}
  \mathcal{A}^{{}^{\mathrm{pb}}}_{N}
  \;:=\;
  \mathbb{C}[\mathcal{D}^{\mathrm{pb}}_{\!N}]/(\mathrm{2T}, \mathrm{4T})
\end{equation}
is the associative unital algebra, graded by number of chords,
which is spanned by the monoid of horizontal chord diagrams \eqref{SetOfHorizontalChordDiagrams}
and then quotiented by the ideal generated by:

\noindent {\bf (a)} the {\it 2T relations}:

\vspace{-2mm}
\begin{equation}
\label{2TRelationsOnHorizontalChordDiagrams}
\scalebox{.9}{
\begin{tabular}{ccc}
$
\left[
\raisebox{-54pt}{
\begin{tikzpicture}

 \draw[thick, orangeii] (0,1) to (1.2,1);
 \begin{scope}[shift={(0,1)}]
   \clip (0,-.1) rectangle (.1,+.1);
   \draw[draw=orangeii, fill=orangeii] (0,0) circle (.07);
 \end{scope}
 \begin{scope}[shift={(1.2,1)}]
   \clip (-.1,-.1) rectangle (0,+.1);
   \draw[draw=orangeii, fill=orangeii] (0,0) circle (.07);
 \end{scope}

 \draw[thick, orangeii] (2.4,0) to (3.6,0);
 \begin{scope}[shift={(2.4,0)}]
   \clip (0,-.1) rectangle (.1,+.1);
   \draw[draw=orangeii, fill=orangeii] (0,0) circle (.07);
 \end{scope}
 \begin{scope}[shift={(3.6,0)}]
   \clip (-.1,-.1) rectangle (0,+.1);
   \draw[draw=orangeii, fill=orangeii] (0,0) circle (.07);
 \end{scope}

 \draw[ultra thick, darkblue] (0,2) to (0,-1);
 \draw[ultra thick, darkblue] (1.2,2) to (1.2,-1);
 \draw[ultra thick, darkblue] (2.4,2) to (2.4,-1);
 \draw[ultra thick, darkblue] (3.6,2) to (3.6,-1);

 \draw (-.6,1.5) node {$\cdots$};
 \draw (.6,1.5) node {$\cdots$};
 \draw (1.2+.6,1.5) node {$\cdots$};
 \draw (2.4+.6,1.5) node {$\cdots$};
 \draw (3.6+.6,1.5) node {$\cdots$};

 \draw (-.6,-.5) node {$\cdots$};
 \draw (.6,-.5) node {$\cdots$};
 \draw (1.2+.6,-.5) node {$\cdots$};
 \draw (2.4+.6,-.5) node {$\cdots$};
 \draw (3.6+.6,-.5) node {$\cdots$};

 \draw (0,-1.4) node {\small $i$};
 \draw (1.2,-1.4) node {\small $j$};
 \draw (2.4,-1.4) node {\small $k$};
 \draw (3.6,-1.4) node {\small $l$};

\end{tikzpicture}
}
\right]
$
&
 $\sim$
&
$
\left[
\raisebox{-54pt}{
\begin{tikzpicture}

 \draw[thick, orangeii] (0,0) to (1.2,0);
 \begin{scope}[shift={(0,0)}]
   \clip (0,-.1) rectangle (.1,+.1);
   \draw[draw=orangeii, fill=orangeii] (0,0) circle (.07);
 \end{scope}
 \begin{scope}[shift={(1.2,0)}]
   \clip (-.1,-.1) rectangle (0,+.1);
   \draw[draw=orangeii, fill=orangeii] (0,0) circle (.07);
 \end{scope}

 \draw[thick, orangeii] (2.4,1) to (3.6,1);
 \begin{scope}[shift={(2.4,1)}]
   \clip (0,-.1) rectangle (.1,+.1);
   \draw[draw=orangeii, fill=orangeii] (0,0) circle (.07);
 \end{scope}
 \begin{scope}[shift={(3.6,1)}]
   \clip (-.1,-.1) rectangle (0,+.1);
   \draw[draw=orangeii, fill=orangeii] (0,0) circle (.07);
 \end{scope}

 \draw[ultra thick, darkblue] (0,2) to (0,-1);
 \draw[ultra thick, darkblue] (1.2,2) to (1.2,-1);
 \draw[ultra thick, darkblue] (2.4,2) to (2.4,-1);
 \draw[ultra thick, darkblue] (3.6,2) to (3.6,-1);

 \draw (-.6,1.5) node {$\cdots$};
 \draw (.6,1.5) node {$\cdots$};
 \draw (1.2+.6,1.5) node {$\cdots$};
 \draw (2.4+.6,1.5) node {$\cdots$};
 \draw (3.6+.6,1.5) node {$\cdots$};

 \draw (-.6,-.5) node {$\cdots$};
 \draw (.6,-.5) node {$\cdots$};
 \draw (1.2+.6,-.5) node {$\cdots$};
 \draw (2.4+.6,-.5) node {$\cdots$};
 \draw (3.6+.6,-.5) node {$\cdots$};

 \draw (0,-1.4) node {\small $i$};
 \draw (1.2,-1.4) node {\small $j$};
 \draw (2.4,-1.4) node {\small $k$};
 \draw (3.6,-1.4) node {\small $l$};

\end{tikzpicture}
}
\right]
$
\end{tabular}
}
\end{equation}

\noindent {\bf (b)} the {\it 4T relations}:

\vspace{-.3cm}

\begin{equation}
\label{4TRelationOnHorizontalChordDiagrams}
\scalebox{.85}{
\hspace{-1.3cm}
\begin{tabular}{cccccccc}
&
$
\left[\!\!\!\!\!
\raisebox{-54pt}{
\begin{tikzpicture}

 \draw[thick, orangeii]
   (0,+1)
   to
   node{\colorbox{white}{\hspace{-2pt}}}
   (2.4,+1);
 \begin{scope}[shift={(0,+1)}]
   \clip (0,-.1) rectangle (.1,+.1);
   \draw[draw=orangeii, fill=orangeii] (0,0) circle (.07);
 \end{scope}
 \begin{scope}[shift={(2.4,+1)}]
   \clip (-.1,-.1) rectangle (0,+.1);
   \draw[draw=orangeii, fill=orangeii] (0,0) circle (.07);
 \end{scope}

 \draw[thick, orangeii]
   (0,0)
   to
   (1.2,0);
 \begin{scope}[shift={(0,0)}]
   \clip (0,-.1) rectangle (.1,+.1);
   \draw[draw=orangeii, fill=orangeii] (0,0) circle (.07);
 \end{scope}
 \begin{scope}[shift={(1.2,0)}]
   \clip (-.1,-.1) rectangle (0,+.1);
   \draw[draw=orangeii, fill=orangeii] (0,0) circle (.07);
 \end{scope}

 \draw[ultra thick, darkblue] (0,2) to (0,-1);
 \draw[ultra thick, darkblue] (1.2,2) to (1.2,-1);
 \draw[ultra thick, darkblue] (2.4,2) to (2.4,-1);

 \draw (-.6,1.5) node {$\cdots$};
 \draw (0.6,1.5) node {$\cdots$};
 \draw (1.2+0.6,1.5) node {$\cdots$};
 \draw (2.4+0.6,1.5) node {$\cdots$};

 \draw (-.6,-.5) node {$\cdots$};
 \draw (0.6,-.5) node {$\cdots$};
 \draw (1.2+0.6,-.5) node {$\cdots$};
 \draw (2.4+0.6,-.5) node {$\cdots$};

 \draw (0,-1.4) node {$i$};
 \draw (1.2,-1.4) node {$j$};
 \draw (2.4,-1.4) node {$k$};

\end{tikzpicture}
}
\!\!\!\!\! \right]
$
&
\!\!\!\!\!\!\!$+$\!\!\!\!\!\!\!
&
$
\left[ \!\!\!\!\!
\raisebox{-54pt}{
\begin{tikzpicture}

 \draw[thick, orangeii]
   (1.2,+1)
   to
   (2.4,+1);
 \begin{scope}[shift={(1.2,+1)}]
   \clip (0,-.1) rectangle (.1,+.1);
   \draw[draw=orangeii, fill=orangeii] (0,0) circle (.07);
 \end{scope}
 \begin{scope}[shift={(2.4,+1)}]
   \clip (-.1,-.1) rectangle (0,+.1);
   \draw[draw=orangeii, fill=orangeii] (0,0) circle (.07);
 \end{scope}

 \draw[thick, orangeii]
   (0,0)
   to
   (1.2,0);
 \begin{scope}[shift={(0,0)}]
   \clip (0,-.1) rectangle (.1,+.1);
   \draw[draw=orangeii, fill=orangeii] (0,0) circle (.07);
 \end{scope}
 \begin{scope}[shift={(1.2,0)}]
   \clip (-.1,-.1) rectangle (0,+.1);
   \draw[draw=orangeii, fill=orangeii] (0,0) circle (.07);
 \end{scope}

 \draw[ultra thick, darkblue] (0,2) to (0,-1);
 \draw[ultra thick, darkblue] (1.2,2) to (1.2,-1);
 \draw[ultra thick, darkblue] (2.4,2) to (2.4,-1);

 \draw (-.6,1.5) node {$\cdots$};
 \draw (0.6,1.5) node {$\cdots$};
 \draw (1.2+0.6,1.5) node {$\cdots$};
 \draw (2.4+0.6,1.5) node {$\cdots$};

 \draw (-.6,-.5) node {$\cdots$};
 \draw (0.6,-.5) node {$\cdots$};
 \draw (1.2+0.6,-.5) node {$\cdots$};
 \draw (2.4+0.6,-.5) node {$\cdots$};

 \draw (0,-1.4) node {$i$};
 \draw (1.2,-1.4) node {$j$};
 \draw (2.4,-1.4) node {$k$};

\end{tikzpicture}
}
\!\!\!\!\! \right]
$
&
\!\!\!\!\!\!\! $\sim$ \!\!\!\!\!\!\!
&
$
\left[ \!\!\!\!\!
\raisebox{-54pt}{
\begin{tikzpicture}

 \draw[thick, orangeii]
   (0,0)
   to
   node{\colorbox{white}{\hspace{-2pt}}}
   (2.4,0);
 \begin{scope}[shift={(0,0)}]
   \clip (0,-.1) rectangle (.1,+.1);
   \draw[draw=orangeii, fill=orangeii] (0,0) circle (.07);
 \end{scope}
 \begin{scope}[shift={(2.4,0)}]
   \clip (-.1,-.1) rectangle (0,+.1);
   \draw[draw=orangeii, fill=orangeii] (0,0) circle (.07);
 \end{scope}

 \draw[thick, orangeii]
   (0,1)
   to
   (1.2,1);
 \begin{scope}[shift={(0,1)}]
   \clip (0,-.1) rectangle (.1,+.1);
   \draw[draw=orangeii, fill=orangeii] (0,0) circle (.07);
 \end{scope}
 \begin{scope}[shift={(1.2,1)}]
   \clip (-.1,-.1) rectangle (0,+.1);
   \draw[draw=orangeii, fill=orangeii] (0,0) circle (.07);
 \end{scope}

 \draw[ultra thick, darkblue] (0,2) to (0,-1);
 \draw[ultra thick, darkblue] (1.2,2) to (1.2,-1);
 \draw[ultra thick, darkblue] (2.4,2) to (2.4,-1);

 \draw (-.6,1.5) node {$\cdots$};
 \draw (0.6,1.5) node {$\cdots$};
 \draw (1.2+0.6,1.5) node {$\cdots$};
 \draw (2.4+0.6,1.5) node {$\cdots$};

 \draw (-.6,-.5) node {$\cdots$};
 \draw (0.6,-.5) node {$\cdots$};
 \draw (1.2+0.6,-.5) node {$\cdots$};
 \draw (2.4+0.6,-.5) node {$\cdots$};

 \draw (0,-1.4) node {$i$};
 \draw (1.2,-1.4) node {$j$};
 \draw (2.4,-1.4) node {$k$};

\end{tikzpicture}
}
\!\!\!\!\! \right]
$
&
\!\!\!\!\!\!\!$+$\!\!\!\!\!\!\!
&
$
\left[\!\!\!\!\!
\raisebox{-55pt}{
\begin{tikzpicture}

 \draw[thick, orangeii]
   (1.2,0)
   to
   (2.4,0);
 \begin{scope}[shift={(1.2,0)}]
   \clip (0,-.1) rectangle (.1,+.1);
   \draw[draw=orangeii, fill=orangeii] (0,0) circle (.07);
 \end{scope}
 \begin{scope}[shift={(2.4,0)}]
   \clip (-.1,-.1) rectangle (0,+.1);
   \draw[draw=orangeii, fill=orangeii] (0,0) circle (.07);
 \end{scope}

 \draw[thick, orangeii]
   (0,1)
   to
   (1.2,1);
 \begin{scope}[shift={(0,1)}]
   \clip (0,-.1) rectangle (.1,+.1);
   \draw[draw=orangeii, fill=orangeii] (0,0) circle (.07);
 \end{scope}
 \begin{scope}[shift={(1.2,1)}]
   \clip (-.1,-.1) rectangle (0,+.1);
   \draw[draw=orangeii, fill=orangeii] (0,0) circle (.07);
 \end{scope}

 \draw[ultra thick, darkblue] (0,2) to (0,-1);
 \draw[ultra thick, darkblue] (1.2,2) to (1.2,-1);
 \draw[ultra thick, darkblue] (2.4,2) to (2.4,-1);

 \draw (-.6,1.5) node {$\cdots$};
 \draw (0.6,1.5) node {$\cdots$};
 \draw (1.2+0.6,1.5) node {$\cdots$};
 \draw (2.4+0.6,1.5) node {$\cdots$};

 \draw (-.6,-.5) node {$\cdots$};
 \draw (0.6,-.5) node {$\cdots$};
 \draw (1.2+0.6,-.5) node {$\cdots$};
 \draw (2.4+0.6,-.5) node {$\cdots$};

 \draw (0,-1.4) node {$i$};
 \draw (1.2,-1.4) node {$j$};
 \draw (2.4,-1.4) node {$k$};

\end{tikzpicture}
}
\!\!\!\!\!\right]
$
\end{tabular}
}
\end{equation}

\noindent
{\bf (iv)} The complex vector
space of {\it weight systems on horizontal chord diagrams}
is the graded linear dual space
$$
  \mathcal{W}^{{}^{\mathrm{pb}}}_{N}
  \;:=\;
  \big(
    \mathcal{A}^{{}^{\mathrm{pb}}}_{N}
  \big)^\ast
$$
to \eqref{AlgebraOfHorizontalChordDiagrams};
hence a weight system is a complex-linear map
  \begin{equation}
    \label{AWeightSystem}
    w \;:\;
    \mathcal{A}^{{}^{\mathrm{pb}}}_{N}
    \longrightarrow
    \mathbb{C}
  \end{equation}
and is of degree $- d$ if it is supported on chord diagrams of degree $d$.
\end{defn}

\begin{remark}[Dependence on algebra structure]
  The definition of weight systems \eqref{AWeightSystem}
  according to Def. \ref{AlgebraOfAndWeightSystemsOnHorizontalChordDiagrams}
  does not depend on the algebra structure on the
  space of chord diagrams \eqref{AlgebraOfHorizontalChordDiagrams},
  but the specialization of weight systems to
  quantum states (Def. \ref{States}) does.
\end{remark}

\medskip

\noindent
{\bf Fundamental Lie algebra weight systems.}
Recall from \cite{BarNatan96} (reviewed in \cite[\S 3.4]{SS19b}) that
the main source of weight systems on horizontal chord diagrams
(Def. \ref{AlgebraOfAndWeightSystemsOnHorizontalChordDiagrams})
are metric Lie representations
$\rho : \mathfrak{g}\otimes \mathbf{V} \xrightarrow{\;} \mathbf{V}$
of metric Lie algebras $\mathfrak{g}$,
\begin{equation}
  \label{LieAlgebraWeightSystems}
  \xymatrix{
    \mathrm{MetricLieModules}
    \ar[rr]^-{w_{(-)}}
    &&
    \mathcal{W}^{{}^{\mathrm{pb}}}_{N}
    \,,
  }
\end{equation}
where the {\it Lie algebra weight system} $w_{(\mathfrak{g}, \mathbf{V})}$ sends a chord diagram
$D \in \mathcal{D}^{{}^{\mathrm{pb}}}_{N}$ to
(see \eqref{EvaluationOfAFundamentalWeightSystem} for illustration):
the number
obtained by labelling all strands by $V$, all vertices by $\rho$,
all chords by $\mathfrak{g}$, then closing all strands to circles using the metric,
regarding the result as Penrose notation 
(review in \cite{Selinger09})
for a rank-0 tensor in the category
of finite-dimensional complex vector spaces,
and evaluating it as such to a complex number.


\begin{defn}[Fundamental $\mathfrak{gl}(n)$-weight system]
  \label{FundamentalWeightSystemOfgl2}
  For $n \in \mathbb{N}_+$, we write
  \begin{equation}
    \label{gl2FundamentalWeightSystem}
    \mathrm{w}_{(\mathfrak{gl}(n),\mathbf{n})}
    \;:\;
    \mathcal{A}^{{}^{\mathrm{pb}}}_{N}
    \longrightarrow
    \mathbb{C}
  \end{equation}
  for the normalized
  Lie algebra weight system \eqref{LieAlgebraWeightSystems} induced by
  the defining complex $n$-dimensional Lie representation $\mathbf{n}$
  of the general complex-linear Lie algebra $\mathfrak{gl}(n)$
  equipped with the metric given by the trace in $\mathbf{n} \simeq_{\mathbb{C}} \mathbb{C}^n$.
\end{defn}

\begin{example}[Fundamental metric on $\mathfrak{gl}(2)$]
  \label{FundamentalMetricOngl2}
  For $n = 2$
  and with respect to the complex linear basis
  of $\mathfrak{gl}(2)$ given by
  \begin{equation}
    \label{LinearBasisForgl2}
    \left\{
    x_0
    :=
    \left[\!\!
      \begin{array}{cc}
        1 & 0
        \\
        0 & 1
      \end{array}
    \!\!\right]
    ,\;
    x_1
    :=
    \left[\!\!
      \begin{array}{cc}
        1 & 0
        \\
        0 & -1
      \end{array}
   \!\! \right]
    ,\;
    x_+
    :=
    \left[\!\!
      \begin{array}{cc}
        0 & 0
        \\
        1 & 0
      \end{array}
    \!\!\right]
    ,\;
    x_-
    :=
    \left[\!\!
      \begin{array}{cc}
        0 & 1
        \\
        0 & 0
      \end{array}
   \!\! \right]
    \right\}
  \end{equation}
  the metric on $\mathfrak{gl}(2)$ according to Def. \ref{FundamentalWeightSystemOfgl2}
  has components
  \vspace{-2mm}
  \begin{equation}
    \label{gl2MetricComponents}
    \big(
      g_{i j}
      :=
      g(x_i, x_j)
    \big)_{i,j \in \{0,1,+,-\}}
    \;=\;
    \left[
    \begin{array}{cccc}
      2 & 0 & 0 & 0
      \\
      0 & 2 & 0 & 0
      \\
      0 & 0 & 0 & 1
      \\
      0 & 0 & 1 & 0
    \end{array}
    \right]
    \,.
  \end{equation}
\end{example}

\begin{lemma}[Fundamental weights and braiding {\cite[Fact 6]{BarNatan96}}]
  \label{gl2ValueOfFundamentalWeightSystem}
  The fundamental metric on $\mathfrak{gl}(2)$ (Ex. \ref{FundamentalMetricOngl2})
  has the special property that
  it makes the value of the fundamental $\mathfrak{gl}(2)$-weight system \eqref{gl2FundamentalWeightSystem}
  on a single chord
  be the braiding operation:
\begin{equation}
  \label{gl2ChordIsBraiding}
\mbox{
\begin{tabular}{|c|l|}
  \hline
  \raisebox{19pt}{
  \begin{tabular}{c}
    \bf Metric
    \\
    \bf Lie algebra
    \\
    \raisebox{-6pt}{
      $(\mathfrak{g}, g)$
    }
  \end{tabular}
  }
  &
  $
  \overset{
    \scalebox{1}{
      \begin{tabular}{c}
        \bf Metric contraction of fundamental action tensors
      \end{tabular}
    }
  }{
  \mbox{
  \hspace{-5.2cm}
  \begin{rotate}{90}
    $\mathclap{
      \scalebox{.5}{
        \color{darkblue}
        \begin{tabular}{c}
          fundamental
          \\
          representation
        \end{tabular}
      }
    }$
  \end{rotate}
  \hspace{2pt}
  \raisebox{-26pt}{
 \begin{tikzpicture}
 \draw (-1,.75) node {\scalebox{.8}{$V$}};
 \draw (+1,.75) node {\scalebox{.8}{$V$}};
 \draw (-1,-.75) node {\scalebox{.8}{$V$}};
 \draw (+1,-.75) node {\scalebox{.8}{$V$}};
 \begin{scope}
   \clip (-1,.5) rectangle (1,-.5);
   \draw[draw=orangeii, fill=orangeii] (-1,0) circle (.07);
   \draw[draw=orangeii, fill=orangeii] (+1,0) circle (.07);
 \end{scope}
 \draw[thick, orangeii]
   (-1,0)
     to
     node[above]
     {
       \scalebox{.5}{
         \color{darkblue}
         virtual gluon line
       }
     }
   (1,0);
 \draw[ultra thick, darkblue] (-1,.5) to (-1,-.5);
 \draw[ultra thick, darkblue] (+1,.5)
   to
     node[very near start]
       { $
         \mathrlap{
         \;
         \scalebox{.5}{
           \color{darkblue} quark line
         }
         }
         $
       }
   (+1,-.5);
 \begin{scope}[shift={(-1,0)}]
 \draw[draw=darkblue, thick, fill=white]
   (-.15,-.13)
   rectangle node{\tiny $\rho$}
   (.15,.17);
 \end{scope}
 \begin{scope}[shift={(+1,0)}]
 \draw[draw=darkblue, thick, fill=white]
   (-.15,-.13)
   rectangle node{\tiny $\rho$}
   (.15,.17);
 \end{scope}
\end{tikzpicture}
  }
  }
  \mathrlap{
  \;\;\;\;\;\;\;
  =
  g^{i j} x_i \otimes x_j
  \;\;\in\; \mathrm{End}(V \otimes V)
  }
  }
  $
  \\
  \hline
  \hline
  $\big( \mathfrak{gl}(2), \mathrm{tr}_{\mathbf{2}}(-\cdot -)\big)$
  &
  \hspace{1.2cm}
\raisebox{-25pt}{
\begin{tikzpicture}

  \draw[ultra thick, darkblue]
    (-.9,-.4) .. controls (-.4,-.4) and (+.4,+.4) .. (+.9,+.4);

  \begin{scope}[shift={(.03,.03)}]
  \draw[ultra thick, white]
    (-.9,.4) .. controls (-.4,.4) and (+.4,-.4) .. (+.9,-.4);
  \end{scope}

  \begin{scope}[shift={(-.03,-.03)}]
  \draw[ultra thick, white]
    (-.9,.4) .. controls (-.4,.4) and (+.4,-.4) .. (+.9,-.4);
  \end{scope}
  \draw[ultra thick, darkblue]
    (-.9,.4) .. controls (-.4,.4) and (+.4,-.4) .. (+.9,-.4);

  \draw[ultra thick, darkblue]
    (-1,.5) ..  controls (-1,.4) and (-1,.4) .. (-.9,.4);
  \draw[ultra thick, darkblue]
    (+1,.5) ..  controls (+1,.4) and (+1,.4) .. (+.9,.4);
  \draw[ultra thick, darkblue]
    (-1,-.5) ..  controls (-1,-.4) and (-1,-.4) .. (-.9,-.4);
  \draw[ultra thick, darkblue]
    (+1,-.5) ..  controls (+1,-.4) and (+1,-.4) .. (+.9,-.4);

 \draw (-1,.75) node {\scalebox{.8}{$\mathclap{\phantom{\vert^{\vert^\vert}}}V$}};
 \draw (+1,.75) node {\scalebox{.8}{$\mathclap{\phantom{\vert_{\vert_\vert}}}V$}};
 \draw (-1,-.75) node {\scalebox{.8}{$V$}};
 \draw (+1,-.75) node {\scalebox{.8}{$V$}};

\end{tikzpicture}
}
$
  =\;
  \begin{aligned}
    & \phantom{=} \phantom{+}
    \tfrac{1}{2} x_0 \otimes x_0
    +
    \tfrac{1}{2} x_1 \otimes x_1
    \\
    &
    \phantom{=}
    +
    x_+ \otimes x_-
    +
    x_- \otimes x_+
  \end{aligned}
$
\\
\hline
\end{tabular}
}
\end{equation}
\end{lemma}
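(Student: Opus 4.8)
The plan is to verify the claimed identity by a short finite computation, exploiting that the contraction $g^{ij}\,x_i \otimes x_j$ is precisely the image under $\rho \otimes \rho$ of the canonical metric copairing (split Casimir) of $\mathfrak{gl}(2)$, and is therefore independent of the chosen linear basis. First I would read off the inverse metric from \eqref{gl2MetricComponents}: the matrix is block-diagonal, with blocks $\mathrm{diag}(2,2)$ on $\mathrm{span}\{x_0,x_1\}$ and $\bigl(\begin{smallmatrix}0&1\\1&0\end{smallmatrix}\bigr)$ on $\mathrm{span}\{x_+,x_-\}$, so its inverse has blocks $\mathrm{diag}(\tfrac12,\tfrac12)$ and the same off-diagonal block. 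Hence $g^{ij}\,x_i \otimes x_j = \tfrac12\,x_0 \otimes x_0 + \tfrac12\,x_1 \otimes x_1 + x_+ \otimes x_- + x_- \otimes x_+$, which is exactly the element displayed on the right-hand side of \eqref{gl2ChordIsBraiding}.

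The cleanest way to recognize this element as the braiding is to pass to the matrix-unit basis $\{E_{ab}\}_{a,b\in\{1,2\}}$ of $\mathfrak{gl}(2)$. There the fundamental trace form reads $g(E_{ab},E_{cd}) = \mathrm{tr}(E_{ab}E_{cd}) = \delta_{ad}\delta_{bc}$, so $E_{ab}$ is metric-dual to $E_{ba}$ and the basis-independent copairing becomes $\sum_{a,b} E_{ab}\otimes E_{ba}$. Evaluating this endomorphism of $\mathbb{C}^2\otimes\mathbb{C}^2$ on a basis vector $e_c\otimes e_d$, using $E_{ab}\,e_c = \delta_{bc}\,e_a$, gives $\sum_{a,b}\delta_{bc}\delta_{ad}\,e_a\otimes e_b = e_d\otimes e_c$; that is, the contraction acts as $v\otimes w \mapsto w\otimes v$, which is precisely the braiding drawn on the right of \eqref{gl2ChordIsBraiding}. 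To keep the displayed basis $\{x_0,x_1,x_+,x_-\}$ self-contained, one can equally act with each of the four summands $\tfrac12 x_0\otimes x_0$, $\tfrac12 x_1\otimes x_1$, $x_+\otimes x_-$, $x_-\otimes x_+$ on the four vectors $e_c\otimes e_d$ directly: the diagonal terms contribute the identity on $e_1\otimes e_1,\,e_2\otimes e_2$ and cancel on the off-diagonal vectors, while $x_+\otimes x_- + x_-\otimes x_+$ supplies exactly the cross terms exchanging $e_1\otimes e_2$ and $e_2\otimes e_1$; the sum reproduces the swap on all four basis vectors.

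There is no genuine obstacle here, since the statement reduces to a finite linear-algebra identity; the only points needing care are bookkeeping ones. One must fix the convention for which tensor factor carries the metric-dual index, which is harmless because $g$ is symmetric ($g^{ij}=g^{ji}$), so the copairing is symmetric and the left/right placement is immaterial. One must also invoke basis-independence of the copairing to legitimize the matrix-unit computation in place of the basis $\{x_0,x_1,x_+,x_-\}$; this is immediate from $g$ being symmetric and nondegenerate (equivalently, from the two cross-checks above agreeing). I would therefore present the matrix-unit evaluation as the conceptual proof and relegate the direct basis check to a one-line verification.
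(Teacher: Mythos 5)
Your proposal is correct, and its computational core coincides with the paper's proof, which simply says ``by explicit computation in the canonical linear basis with its metric components'' and displays the single check $g^{ij}x_i\otimes x_j(e_1\otimes e_2)=e_2\otimes e_1$. Where you go beyond the paper is the matrix-unit reformulation: observing that $\mathrm{tr}(E_{ab}E_{cd})=\delta_{ad}\delta_{bc}$ makes the copairing $\sum_{a,b}E_{ab}\otimes E_{ba}$, which visibly acts as $e_c\otimes e_d\mapsto e_d\otimes e_c$. This is a genuinely cleaner route: it removes the case-by-case bookkeeping in the basis $\{x_0,x_1,x_+,x_-\}$, it makes the basis-independence of the split Casimir explicit (which the paper leaves implicit), and it shows at no extra cost that the same identity holds for $\mathfrak{gl}(n)$ with the fundamental trace form for every $n$ --- a fact the paper needs anyway for Cor.~\ref{FundamentalWeightSystemInTermsOfCycles} but only cites from \cite{BarNatan96}. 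The only thing the paper's version buys in exchange is that it stays entirely inside the basis fixed in Ex.~\ref{FundamentalMetricOngl2}, so no change-of-basis remark is needed; you handle that point correctly by noting the symmetry and nondegeneracy of $g$.
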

\begin{proof}
  By explicit computation
  in the canonical linear basis \eqref{LinearBasisForgl2}
  with its metric components \eqref{gl2MetricComponents}.
  For example:
  $$
    \begin{aligned}
    g^{i j} x_i \otimes x_j
    \left(
    \left[\!\!
    \begin{array}{c}
      1
      \\
      0
    \end{array}
   \!\! \right]
    \otimes
    \left[\!\!
    \begin{array}{c}
      0
      \\
      1
    \end{array}
   \!\! \right]
    \right)
    & =
    \underset{= 0}{
    \underbrace{
    \tfrac{1}{2}
    \left[\!\!
    \begin{array}{c}
      1
      \\
      0
    \end{array}
    \!\! \right]
    \otimes
    \left[\!\!
    \begin{array}{c}
      0
      \\
      1
    \end{array}
   \!\! \right]
    \;+\;
    \tfrac{1}{2}
    \left[ \!\!
    \begin{array}{c}
      1
      \\
      0
    \end{array}
    \!\! \right]
    \otimes
    \left[\!\!
    \begin{array}{c}
      0
      \\
      -1
    \end{array}
    \!\! \right]
    }}
    \;+\;
    \left[\!\!
    \begin{array}{c}
      0
      \\
      1
    \end{array}
    \!\! \right]
    \otimes
    \left[\!\!
    \begin{array}{c}
      1
      \\
      0
    \end{array}
    \!\! \right]
    +
    \left[\!\!
    \begin{array}{c}
      0
      \\
      0
    \end{array}
    \!\!\right]
    \otimes
    \left[\!\!
    \begin{array}{c}
      0
      \\
      0
    \end{array}
    \!\! \right]
    \mathrlap{\,.}
    \end{aligned}
  $$
\vspace{-.9cm}

\end{proof}

\begin{cor}[Fundamental weight systems in terms of permutation cycles
({\cite[Prop. 2.1]{BarNatan96}})]
  \label{FundamentalWeightSystemInTermsOfCycles}
  The value of the fundamental $\mathfrak{gl}(2)$-weight system
  $w_{(\mathfrak{gl}(2),\mathbf{2})}$ \eqref{gl2FundamentalWeightSystem} on
  a horizontal chord diagram $D \in \mathcal{A}^{{}^{\mathrm{pb}}}_{N}$
  equals, up to normalization,
  2 taken to the power of the number
  of cycles \eqref{CyclesInPermutationCorrespondingToChordDiagram}
  in the permutation $\mathrm{perm}(D)$
  \eqref{MonoidHomomorphismFromHorizontalChordsToPermutations}
  corresponding to the chord diagram:
  \begin{equation}
    \label{ValueOfFundamentalgl2WeightSystemOnChordDiagram}
    \begin{aligned}
      w_{(\mathfrak{gl}(2), \mathbf{2})}([D])
      &
      \;=\;
      \overset{
       \mathclap{
        \mbox{
          \tiny
          \color{darkblue}
          \bf
          normalization
        }
        }
      }{
      \overbrace{
        2^{-N}
      }
      }
      \cdot 2^{\#\mathrm{cycles}(\mathrm{perm}(D))}
      \\
      & =
      e^{
        \mathrm{ln}(2)
          \cdot
        \big(
          \#\mathrm{cycles}(\mathrm{perm}(D))
          -
          N
        \big)
      }.
    \end{aligned}
  \end{equation}

  \noindent
  Generally, the analogous statement is true for the fundamental
  $\mathfrak{gl}(n)$-weight system (Def. \ref{FundamentalWeightSystemOfgl2})
  for all $n \in \mathbb{N}$, $n \geq 2$:
  \begin{equation}
    \label{ValueOfFundamentalgl2WeightSystemOnChordDiagram}
    w_{(\mathfrak{gl}(n),\mathbf{n})}([D])
    \;=\;
    e^{
      \mathrm{ln}(n)
        \cdot
      \big(
        \#\mathrm{cycles}(\mathrm{perm}(D))
        -
        N
      \big)
    }
    \,.
  \end{equation}

\end{cor}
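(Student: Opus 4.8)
The plan is to reduce the evaluation of $w_{(\mathfrak{gl}(n),\mathbf{n})}$ on a chord diagram to the trace of a tensor-factor permutation operator on $V^{\otimes N}$, with $V \simeq_{\mathbb{C}} \mathbb{C}^n$, and then invoke the classical fact that such a trace equals $n$ raised to the number of cycles.

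\textbf{Single chord as a flip.} First I would generalize Lemma \ref{gl2ValueOfFundamentalWeightSystem} from $n = 2$ to arbitrary $n$ by working in the elementary-matrix basis $\{E_{ab}\}_{1 \leq a,b \leq n}$ of $\mathfrak{gl}(n)$. With respect to the trace metric one has $g(E_{ab}, E_{cd}) = \mathrm{tr}(E_{ab}E_{cd}) = \delta_{ad}\,\delta_{bc}$, so the metric-dual of $E_{ab}$ is $E_{ba}$, and the metric-contracted action tensor is $\sum_{a,b} E_{ab}\otimes E_{ba} \in \mathrm{End}(V\otimes V)$. Evaluating this on a basis vector $e_c \otimes e_d$ gives $\sum_{a,b}\delta_{bc}\,\delta_{ad}\, e_a \otimes e_b = e_d \otimes e_c$; hence a single chord contributes exactly the flip (transposition) operator on the two strands it connects.

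\textbf{A chord diagram as a permutation operator.} Next, following the Penrose/tensor-network prescription recalled after \eqref{LieAlgebraWeightSystems}, the value of $w_{(\mathfrak{gl}(n),\mathbf{n})}$ on $D = (i_1 j_1)\cdots(i_d j_d)$ is computed by placing the flip operator on the pair $(i_k, j_k)$ of strands for each chord, composing these along the $N$ strands, and finally closing all strands into loops via the metric. By the first step each chord $(i_k j_k)$ acts as the transposition operator $t_{i_k j_k}$ on $V^{\otimes N}$, so their composite is precisely $\rho\big(\mathrm{perm}(D)\big)$, the operator permuting tensor factors according to the permutation $\mathrm{perm}(D) = t_{i_1 j_1}\circ\cdots\circ t_{i_d j_d}$ of \eqref{MonoidHomomorphismFromHorizontalChordsToPermutations}. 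Closing up all strands into circles is exactly the trace over $V^{\otimes N}$, so the unnormalized value is $\mathrm{tr}_{V^{\otimes N}}\!\big(\rho(\mathrm{perm}(D))\big)$.

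\textbf{Trace of a permutation operator and normalization.} Finally I would compute this trace. In the standard tensor basis $e_{a_1}\otimes\cdots\otimes e_{a_N}$ of $V^{\otimes N}$, the operator $\rho(\sigma)$ has a nonzero diagonal entry precisely when the multi-index $(a_1,\dots,a_N)$ is constant along each cycle of $\sigma$; the number of such multi-indices is $n^{\#\mathrm{cycles}(\sigma)}$, so $\mathrm{tr}_{V^{\otimes N}}(\rho(\sigma)) = n^{\#\mathrm{cycles}(\sigma)}$. Combined with the normalization factor $n^{-N}$ of Def. \ref{FundamentalWeightSystemOfgl2} (which makes the empty diagram, whose permutation has $N$ cycles, evaluate to $1$), this yields
\[
  w_{(\mathfrak{gl}(n),\mathbf{n})}([D])
  = n^{-N}\cdot n^{\#\mathrm{cycles}(\mathrm{perm}(D))}
  = e^{\mathrm{ln}(n)\,(\#\mathrm{cycles}(\mathrm{perm}(D)) - N)}.
\]
The only non-routine point is the bookkeeping in the second step: matching the diagrammatic composition of flip operators with the monoid homomorphism \eqref{MonoidHomomorphismFromHorizontalChordsToPermutations}, i.e.\ checking that composing transpositions on the correct tensor factors reproduces $\rho(\mathrm{perm}(D))$ with consistent conventions. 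Once the flip identification and this matching are in place, the single-chord computation and the cycle-counting trace are elementary linear algebra. As a bonus, since the value depends only on $\mathrm{perm}(D)\in\mathrm{Sym}(N)$, it automatically respects the 2T and 4T relations and hence descends to the quotient algebra $\mathcal{A}^{{}^{\mathrm{pb}}}_N$.
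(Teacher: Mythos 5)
Your proposal is correct and follows essentially the same route the paper indicates: Lemma \ref{gl2ValueOfFundamentalWeightSystem} identifies a single metric-contracted chord with the flip operator (which you rightly extend to general $n$ via the basis $E_{ab}$ with $\mathrm{tr}(E_{ab}E_{cd})=\delta_{ad}\delta_{bc}$), so the closed diagram evaluates to $\mathrm{tr}_{V^{\otimes N}}(\rho(\mathrm{perm}(D))) = n^{\#\mathrm{cycles}(\mathrm{perm}(D))}$, exactly as illustrated in \eqref{EvaluationOfAFundamentalWeightSystem}, with the $n^{-N}$ normalization of Def. \ref{FundamentalWeightSystemOfgl2}. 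The paper leaves this as a citation to Bar-Natan rather than writing it out, but your argument is the intended one and is complete.
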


For example:
\vspace{-3mm}
\begin{equation}
  \label{EvaluationOfAFundamentalWeightSystem}
  \mbox{
   \hspace{-1cm}
  \begin{tabular}{ccc}
  \raisebox{-45pt}{
    \begin{tikzpicture}

 \draw[draw=white, fill=white] (-4.5,3) rectangle (.15+.5,-1.8);

 \draw[thick, orangeii] (-4,1) to (-2,1);
 \draw[thick, orangeii] (-2,0) to (0,0);
 \draw[thick, orangeii] (-4,-1) to (0,-1);

 \draw[thick, white] (-2-.055,-1) to (-2+.055,-1);

 \begin{scope}[shift={(-4,1)}]
   \clip (0,-.1) rectangle (0+.1,+.1);
   \draw[draw=orangeii, fill=orangeii] (0,0) circle (.07);
 \end{scope}

 \begin{scope}[shift={(-2,1)}]
   \clip (-.1,-.1) rectangle (0,+.1);
   \draw[draw=orangeii, fill=orangeii] (0,0) circle (.07);
 \end{scope}

 \begin{scope}[shift={(-4,-1)}]
   \clip (0,-.1) rectangle (0+.1,+.1);
   \draw[draw=orangeii, fill=orangeii] (0,0) circle (.07);
 \end{scope}

 \begin{scope}[shift={(0,-1)}]
   \clip (-.1,-.1) rectangle (0,+.1);
   \draw[draw=orangeii, fill=orangeii] (0,0) circle (.07);
 \end{scope}

 \begin{scope}[shift={(-2,0)}]
   \clip (0,-.1) rectangle (0+.1,+.1);
   \draw[draw=orangeii, fill=orangeii] (0,0) circle (.07);
 \end{scope}

 \begin{scope}[shift={(0,0)}]
   \clip (-.1,-.1) rectangle (0,+.1);
   \draw[draw=orangeii, fill=orangeii] (0,0) circle (.07);
 \end{scope}

 \draw[darkblue, ultra thick] (-4,1.8) to (-4,-1.8);
 \draw[darkblue, ultra thick] (-2,1.8) to (-2,-1.8);
 \draw[darkblue, ultra thick] (0,1.8) to (0,-1.8);

 \draw (-4+.2,-1.5) node {\tiny $1$};
 \draw (-2+.2,-1.5) node {\tiny $2$};
 \draw (-0+.2,-1.5) node {\tiny $3$};

 \draw[->] (-4.3, -.8+.4) to (-4.3, -1.8+.4);

\end{tikzpicture}
}
  &
  \hspace{-1.4cm}
  $
    \underset{
      \mathclap{
      \mbox{
        \tiny
        \color{darkblue}
        \begin{tabular}{c}
          fundamental \\
          $\mathfrak{gl}(n)$-weight
          \\
          system
        \end{tabular}
      }
      }
    }{
      \overset{
        n^3
        \cdot
        w_{(\mathfrak{gl}(n),\mathbf{n})}(-)
      }{
        \xmapsto{\;\;\;\;\;\;\;\;\;\;\;\;\;\;\;\;\;\;\;\;\;\;\;\;\;\;}
      }
    }
  $
  &
  \raisebox{-90pt}{
\begin{tikzpicture}

 \draw[black, thick, dashed] (.5+.67,.65) ellipse (.8 and 3.2);

 \draw[draw=white, fill=white] (-4.5,3) rectangle (.15+.5,-1.8);

 \draw[thick, white] (-2-.055,-1) to (-2+.055,-1);

 \draw (-4-.2, {3-(3+1.8)/3}) node {\scalebox{.6}{$\mathbf{n}$}};
 \draw (-2-.2, {3-(3+1.8)/3}) node {\scalebox{.6}{$\mathbf{n}$}};
 \draw (-0-.2, {3-(3+1.8)/3}) node {\scalebox{.6}{$\mathbf{n}$}};

 \draw[orangeii, thick] (-4,{3-.5*(3+1.8)/3}) to (-2,{3-.5*(3+1.8)/3});

 \draw[orangeii, thick] (-2,{3-1.5*(3+1.8)/3}) to (-0,{3-1.5*(3+1.8)/3});

 \draw[orangeii, thick] (-4,{3-2.5*(3+1.8)/3}) to (-0,{3-2.5*(3+1.8)/3});

 \draw[white, line width=3pt]
   (-2,3) to (-2,-1.8);

 \draw[darkblue, ultra thick]
   (-4,3) to (-4,-1.8);
 \draw[darkblue, ultra thick]
   (-2,3) to (-2,-1.8);
 \draw[darkblue, ultra thick]
   (-0,3) to (-0,-1.8);

  \begin{scope}[shift={(-4,{3-0.5*(3+1.8)/3})}]
    \draw[draw=orangeii, fill=orangeii] (0,0) circle (.07);
   \draw[draw=darkblue, thick, fill=white]
     (-.15,-.13)
     rectangle node{\tiny $\rho$}
     (.15,.17);
  \end{scope}

  \begin{scope}[shift={(-2,{3-0.5*(3+1.8)/3})}]
    \draw[draw=orangeii, fill=orangeii] (0,0) circle (.07);
   \draw[draw=darkblue, thick, fill=white]
     (-.15,-.13)
     rectangle node{\tiny $\rho$}
     (.15,.17);
  \end{scope}

  \begin{scope}[shift={(-2,{3-1.5*(3+1.8)/3})}]
    \draw[draw=orangeii, fill=orangeii] (0,0) circle (.07);
   \draw[draw=darkblue, thick, fill=white]
     (-.15,-.13)
     rectangle node{\tiny $\rho$}
     (.15,.17);
  \end{scope}

  \begin{scope}[shift={(-0,{3-1.5*(3+1.8)/3})}]
    \draw[draw=orangeii, fill=orangeii] (0,0) circle (.07);
   \draw[draw=darkblue, thick, fill=white]
     (-.15,-.13)
     rectangle node{\tiny $\rho$}
     (.15,.17);
  \end{scope}

  \begin{scope}[shift={(-4,{3-2.5*(3+1.8)/3})}]
    \draw[draw=orangeii, fill=orangeii] (0,0) circle (.07);
   \draw[draw=darkblue, thick, fill=white]
     (-.15,-.13)
     rectangle node{\tiny $\rho$}
     (.15,.17);
  \end{scope}

  \begin{scope}[shift={(-0,{3-2.5*(3+1.8)/3})}]
    \draw[draw=orangeii, fill=orangeii] (0,0) circle (.07);
   \draw[draw=darkblue, thick, fill=white]
     (-.15,-.13)
     rectangle node{\tiny $\rho$}
     (.15,.17);
  \end{scope}

 \draw (1.8,3.7) node
   {
     \tiny
     \color{darkblue}
     \begin{tabular}{c}
       close
     \end{tabular}
   };

 \draw[gray, dashed,  thick] (-4.5,3) to (.15+.5,3);
 \draw[gray, dashed, thick] (-4.5,-1.8) to (.15+.5,-1.8);

 \begin{scope}[shift={(.15,.44)}]
   \draw[gray, dashed,  thick] (-4.5,3) to (.15+.5,3);
 \end{scope}
 \begin{scope}[shift={(.15,-3-1.8-.44)}]
   \draw[gray, dashed,  thick] (-4.5,3) to (.15+.5,3);
 \end{scope}

 \begin{scope}[shift={(.33,.84)}]
   \draw[gray, dashed,  thick] (-4.5,3) to (.15+.5,3);
 \end{scope}
 \begin{scope}[shift={(.45,-3-1.8-.755)}]
   \draw[gray, dashed,  thick] (-4.5,3) to (.15+.5,3);
 \end{scope}

\end{tikzpicture}
  }
  \\
  $\;\;\;\;=$
  \raisebox{-90pt}{

\begin{tikzpicture}

 \draw[black, thick, dashed] (.5+.67,.65) ellipse (.8 and 3.2);

 \draw[draw=white, fill=white] (-4.5,3) rectangle (.15+.5,-1.8);

 \draw[thick, white] (-2-.055,-1) to (-2+.055,-1);

 \draw (-4-.2, {3-(3+1.8)/3}) node {\scalebox{.6}{$\mathbf{n}$}};
 \draw (-2-.2, {3-(3+1.8)/3}) node {\scalebox{.6}{$\mathbf{n}$}};
 \draw (-0-.2, {3-(3+1.8)/3}) node {\scalebox{.6}{$\mathbf{n}$}};

 \draw[darkblue, ultra thick]
   (-4,3) .. controls (-4,3-1.2) and (-2,{3-(3+1.8)/3+1.2}) .. (-2,{3-(3+1.8)/3});
 \draw[white, line width=3pt]
   (-2,3) .. controls (-2,3-1.2) and (-4,{3-(3+1.8)/3+1.2}) .. (-4,{3-(3+1.8)/3});
 \draw[darkblue, ultra thick]
   (-2,3) .. controls (-2,3-1.2) and (-4,{3-(3+1.8)/3+1.2}) .. (-4,{3-(3+1.8)/3});

 \draw[darkblue, ultra thick]
   (0,3) to (0,{3-(3+1.8)/3});

 \begin{scope}[shift={(2,{-(3+1.8)/3})}]
 \draw[darkblue, ultra thick]
   (-4,3) .. controls (-4,3-1.2) and (-2,{3-(3+1.8)/3+1.2}) .. (-2,{3-(3+1.8)/3});
 \draw[white, line width=3pt]
   (-2,3) .. controls (-2,3-1.2) and (-4,{3-(3+1.8)/3+1.2}) .. (-4,{3-(3+1.8)/3});
 \draw[darkblue, ultra thick]
   (-2,3) .. controls (-2,3-1.2) and (-4,{3-(3+1.8)/3+1.2}) .. (-4,{3-(3+1.8)/3});
 \end{scope}

 \draw[darkblue, ultra thick]
   (-4,{3-1*(3+1.8)/3}) to (-4,{3-2*(3+1.8)/3});

 \draw[darkblue, ultra thick]
   (-2,{3-2*(3+1.8)/3}) to (-2,{3-3*(3+1.8)/3});

 \draw[white, line width=3pt]
   (-4,{3-2*(3+1.8)/3})
     .. controls
     (-4,{3-2*(3+1.8)/3-1.2+.2})
     and
     (0,{3-3*(3+1.8)/3+1.2+.2})
     ..
     (0,{3-3*(3+1.8)/3});

 \draw[darkblue, ultra thick]
   (-4,{3-2*(3+1.8)/3})
     .. controls
     (-4,{3-2*(3+1.8)/3-1.2+.2})
     and
     (0,{3-3*(3+1.8)/3+1.2+.2})
     ..
     (0,{3-3*(3+1.8)/3});

 \draw[white, line width=3pt]
   (0,{3-2*(3+1.8)/3})
     .. controls
     (0,{3-2*(3+1.8)/3-1.2-.2})
     and
     (-4,{3-3*(3+1.8)/3+1.2-.2})
     ..
     (-4,{3-3*(3+1.8)/3});

 \draw[darkblue, ultra thick]
   (0,{3-2*(3+1.8)/3})
     .. controls
     (0,{3-2*(3+1.8)/3-1.2-.2})
     and
     (-4,{3-3*(3+1.8)/3+1.2-.2})
     ..
     (-4,{3-3*(3+1.8)/3});

 \draw (1.8,3.7) node
   {
     \tiny
     \color{darkblue}
     \begin{tabular}{c}
       close
     \end{tabular}
   };

 \draw[gray, dashed,  thick] (-4.5,3) to (.15+.5,3);
 \draw[gray, dashed, thick] (-4.5,-1.8) to (.15+.5,-1.8);

 \begin{scope}[shift={(.15,.44)}]
   \draw[gray, dashed,  thick] (-4.5,3) to (.15+.5,3);
 \end{scope}
 \begin{scope}[shift={(.15,-3-1.8-.44)}]
   \draw[gray, dashed,  thick] (-4.5,3) to (.15+.5,3);
 \end{scope}

 \begin{scope}[shift={(.33,.84)}]
   \draw[gray, dashed,  thick] (-4.5,3) to (.15+.5,3);
 \end{scope}
 \begin{scope}[shift={(.45,-3-1.8-.755)}]
   \draw[gray, dashed,  thick] (-4.5,3) to (.15+.5,3);
 \end{scope}

\end{tikzpicture}
  }
  & $=$
  &
  \hspace{-.4cm}
  \raisebox{-30pt}{
\begin{tikzpicture}

 \draw[darkblue, ultra thick] (0,0) circle (1);
 \draw[darkblue, ultra thick] (2.5,0) circle (1);

 \draw (130:1.2) node {\scalebox{.6}{$\mathbf{n}$}};

 \begin{scope}[shift={(2.5,0)}]
   \draw (130:1.2) node {\scalebox{.6}{$\mathbf{n}$}};
 \end{scope}

\end{tikzpicture}
  }
  $\;\;\; = \;\;\; n^2$
  \end{tabular}
  }
\end{equation}

\medskip

\subsection{Quantum states on quantum observable algebras}
\label{QuantumStatesOnQuantumObservableAlgebras}

\noindent {\bf Quantum observables.}
The following Definition \ref{StarAlgebra}
is often considered for Banach algebras, where it
yields the concept of {\it $C^*$-algebras} (e.g. \cite[\S A4]{Meyer95}\cite[Def. C.1]{Landsman17}).
We need the simple specialization to finite-dimensional star-algebras (e.g. \cite[\S II]{BGQR13}),
or rather the evident mild generalization of that to degreewise
finite-dimensional graded algebras:
\begin{defn}[Star-algebra]
  \label{StarAlgebra}
  A \emph{star-algebra}, for the present purpose, is a
  degreewise finite-dimensional $\mathbb{Z}$-graded associative algebra
  $\mathcal{A}$ over the complex numbers, equipped with an
  involutive anti-linear anti-homomorphism $(-)^\ast$
  (the \emph{star-operation}), hence with a
  function
  \vspace{-2mm}
  $$
    \xymatrix{
      \mathcal{A}
      \ar[rr]^-{(-)^\ast}
      &&
      \mathcal{A}
    }
  $$

    \vspace{-2mm}
\noindent   which satisfies:\\

  \begin{tabular}{llll}
  {\bf (0)} & (degree):
    & $\mathrm{deg}(A) = \mathrm{deg}(A^\ast)$
    &
    \multirow{1}{*}{
      for all homogeneous $A \in \mathcal{A}$
    }
  \\
  {\bf (1)} &  (anti-linearity):
    &
    $\big( a_1 A_1 + a_2 A_2\big)^\ast = \bar a_1 A_1^\ast + \bar a_2 A_2^\ast $
    &
    \multirow{3}{*}{
      for all $a_i \in \mathbb{C}$, $A_i \in \mathcal{A}$
    }
  \\
  {\bf (2)} & (anti-homomorphism):
    &
    $\big( A_1 A_2\big)^\ast = A_2^\ast A_1^\ast$
  \\
  {\bf (3)} & (involution):
    & $\left( (A)^\ast \right)^\ast = A$,
  \end{tabular}\\

  \noindent where $\bar a_i$ denotes the complex conjugate of
  $a_i$.
\end{defn}

We highlight the following example:
\begin{prop}[Star-structure on horizontal chord diagrams]
  \label{StarStructureOnHorizontalChordDiagrams}
  The algebra of horizontal chord diagrams \eqref{AlgebraOfHorizontalChordDiagrams}
  becomes a complex star-algebra (Def. \ref{StarAlgebra})
  via the star-operation
  \begin{equation}
    \label{StarOperationOnLinearCombinationsOfHorizontalChordDiagrams}
    \xymatrix@R=0pt{
      \mathcal{A}^{{}^{\mathrm{pb}}}_{N}
      \ar[rr]^-{ (-)^\ast}
      &&
      \mathcal{A}^{{}^{\mathrm{pb}}}_{N}
      \\
      a_1 \cdot D_1
      +
      a_2 \cdot D_2
      \ar@{}[rr]|-{\longmapsto}
      &&
      \bar a_1 \cdot D_1^\ast
      +
      \bar a_2 \cdot D_2^\ast
      \,,
    }
  \end{equation}
  where
  \begin{equation}
    \label{StarOperationOnHorizontalChordDiagrams}
    \xymatrix{
      \mathcal{D}^{{}^{\mathrm{pb}}}_{\!N}
      \ar[rr]^-{ (-)^\ast}
      &&
      \mathcal{D}^{{}^{\mathrm{pb}}}_{\!N}
    }
  \end{equation}
  is the operation that
  reverses the orientation of strands in a chord diagram \eqref{SetOfHorizontalChordDiagrams},
  hence which reverses the ordering of the corresponding lists
  \eqref{HorizontalChordDiagramAsListOfChords}.
\end{prop}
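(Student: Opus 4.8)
The plan is to verify the four axioms of Definition \ref{StarAlgebra} in two stages: first establish them for list-reversal on the free monoid and its complex span, and then show that the defining ideal is stable under the resulting operation, so that the star-structure descends to the quotient \eqref{AlgebraOfHorizontalChordDiagrams}.

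First I would record the elementary behaviour of the reversal operation \eqref{StarOperationOnHorizontalChordDiagrams} on the free monoid $\mathcal{D}^{{}^{\mathrm{pb}}}_{\!N}$ \eqref{SetOfHorizontalChordDiagrams}. Writing a diagram as a list $D = (i_1 j_1)\cdots(i_d j_d)$ as in \eqref{HorizontalChordDiagramAsListOfChords}, its reversal is $D^\ast = (i_d j_d)\cdots(i_1 j_1)$. Three facts are then immediate: (a) reversal leaves the number of chords unchanged, hence preserves the grading, which is axiom \textbf{(0)}; (b) reversing a list twice returns the original list, so reversal is an involution on the monoid; and (c) reversing a concatenation reverses the order of the two sublists, so $(D_1 \cdot D_2)^\ast = D_2^\ast \cdot D_1^\ast$, i.e. reversal is an anti-homomorphism of the monoid. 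In particular a single chord is a one-element list and is therefore fixed, $(ij)^\ast = (ij)$. Extending anti-linearly over $\mathbb{C}$ as prescribed by \eqref{StarOperationOnLinearCombinationsOfHorizontalChordDiagrams} to the monoid algebra $\mathbb{C}[\mathcal{D}^{\mathrm{pb}}_{\!N}]$, axiom \textbf{(1)} holds by construction, axiom \textbf{(3)} follows by combining the monoid-level involution with the involutivity of complex conjugation, and axiom \textbf{(2)} follows from fact (c) on basis monomials together with anti-linearity. Thus $(-)^\ast$ is already a star-structure on the free algebra.

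The one substantive point, and the step I expect to be the main obstacle, is well-definedness on the quotient \eqref{AlgebraOfHorizontalChordDiagrams}: I must show that the two-sided ideal $I$ generated by the 2T-relations \eqref{2TRelationsOnHorizontalChordDiagrams} and the 4T-relations \eqref{4TRelationOnHorizontalChordDiagrams} satisfies $I^\ast \subseteq I$. Since $(-)^\ast$ is a bijective anti-homomorphism of the free algebra, the star of the two-sided ideal generated by a relation $r$ is the two-sided ideal generated by $r^\ast$ (conjugating $a\, r\, b \mapsto b^\ast r^\ast a^\ast$, with $a^\ast, b^\ast$ ranging over all of $\mathbb{C}[\mathcal{D}^{\mathrm{pb}}_{\!N}]$); hence it suffices to check that the star of each generating relation again lies in $I$. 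For a 2T-relation, the generator (for disjoint index pairs) is $(ij)(kl) - (kl)(ij)$, and since the chords are star-fixed its star is $(kl)(ij) - (ij)(kl)$, the negative of the same generator. For a 4T-relation, the generator read off from \eqref{4TRelationOnHorizontalChordDiagrams} has the four-term form $(ik)(ij) + (jk)(ij) - (ij)(ik) - (ij)(jk)$ (together with its variants), and applying the anti-homomorphism property with star-fixed chords reverses each length-two product, sending this generator likewise to its own negative.

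In both cases $r^\ast = -r \in I$, whence $I^\ast \subseteq I$, and by involutivity in fact $I^\ast = I$. Therefore $(-)^\ast$ passes to the quotient $\mathcal{A}^{{}^{\mathrm{pb}}}_{\!N}$, where it inherits axioms \textbf{(0)}--\textbf{(3)} from the free algebra, which is exactly the asserted complex star-structure. The only place any care is required is matching the conventions of the 4T diagrams in \eqref{4TRelationOnHorizontalChordDiagrams} to the correct algebraic generator; once that identification is fixed, the stability computation is immediate from the anti-homomorphism property and the star-invariance of single chords.
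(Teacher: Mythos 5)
Your proposal is correct and follows essentially the same route as the paper's own (very terse) proof: verify the star-axioms on the free monoid algebra, observe that reversal preserves the 2T- and 4T-relations (hence the ideal they generate), and descend to the quotient. Your version merely spells out the details — in particular the explicit check that each generating relation is sent to its own negative — which the paper dismisses as ``manifest.''
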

For example:
$$
\left(
a
\cdot
\left[
\scalebox{.8}{
\raisebox{-105pt}{
\begin{tikzpicture}
 \draw[thick, orangeii] (0,0) to node{$\phantom{AA}$} (1,0);
 \begin{scope}
   \clip (0,-.1) rectangle (.1,+.1);
   \draw[draw=orangeii, fill=orangeii] (0,0) circle (.07);
 \end{scope}
 \begin{scope}[shift={(1,0)}]
   \clip (-.1,-.1) rectangle (0,+.1);
   \draw[draw=orangeii, fill=orangeii] (0,0) circle (.07);
 \end{scope}
 \draw[thick, orangeii] (1,1) to node{$\phantom{AA}$} (2,1);
 \begin{scope}[shift={(1,1)}]
   \clip (0,-.1) rectangle (.1,+.1);
   \draw[draw=orangeii, fill=orangeii] (0,0) circle (.07);
 \end{scope}
 \begin{scope}[shift={(2,1)}]
   \clip (-.1,-.1) rectangle (0,+.1);
   \draw[draw=orangeii, fill=orangeii] (0,0) circle (.07);
 \end{scope}
 \draw[thick, orangeii] (2,-3) to (3,-3);
 \begin{scope}[shift={(2,-3)}]
   \clip (0,-.1) rectangle (.1,+.1);
   \draw[draw=orangeii, fill=orangeii] (0,0) circle (.07);
 \end{scope}
 \begin{scope}[shift={(3,-3)}]
   \clip (-.1,-.1) rectangle (0,+.1);
   \draw[draw=orangeii, fill=orangeii] (0,0) circle (.07);
 \end{scope}
 \draw[thick, orangeii]
   (1,-1)
   to
   node{\colorbox{white}{\hspace{-2pt}}}
   (3,-1);
 \begin{scope}[shift={(1,-1)}]
   \clip (0,-.1) rectangle (.1,+.1);
   \draw[draw=orangeii, fill=orangeii] (0,0) circle (.07);
 \end{scope}
 \begin{scope}[shift={(3,-1)}]
   \clip (-.1,-.1) rectangle (0,+.1);
   \draw[draw=orangeii, fill=orangeii] (0,0) circle (.07);
 \end{scope}
 \draw[thick, orangeii]
   (1,-2) to (4,-2);
 \draw (2,-2) node {\colorbox{white}{\hspace{-2pt}}};
 \draw (3,-2) node {\colorbox{white}{\hspace{-2pt}}};
 \begin{scope}[shift={(1,-2)}]
   \clip (0,-.1) rectangle (.1,+.1);
   \draw[draw=orangeii, fill=orangeii] (0,0) circle (.07);
 \end{scope}
 \begin{scope}[shift={(4,-2)}]
   \clip (-.1,-.1) rectangle (0,+.1);
   \draw[draw=orangeii, fill=orangeii] (0,0) circle (.07);
 \end{scope}
 \draw[thick, orangeii]
   (0,2) to (3,2);
 \draw (1,2) node {\colorbox{white}{\hspace{-2pt}}};
 \draw (2,2) node {\colorbox{white}{\hspace{-2pt}}};
 \begin{scope}[shift={(0,2)}]
   \clip (0,-.1) rectangle (.1,+.1);
   \draw[draw=orangeii, fill=orangeii] (0,0) circle (.07);
 \end{scope}
 \begin{scope}[shift={(3,2)}]
   \clip (-.1,-.1) rectangle (0,+.1);
   \draw[draw=orangeii, fill=orangeii] (0,0) circle (.07);
 \end{scope}
 \draw[ultra thick, darkblue] (0,3) to  (0,-4);
 \draw[ultra thick, darkblue] (1,3) to  (1,-4);
 \draw[ultra thick, darkblue] (2,3) to  (2,-4);
 \draw[ultra thick, darkblue] (3,3) to  (3,-4);
 \draw[ultra thick, darkblue] (4,3) to  (4,-4);
 \draw[->] (4+.3,-3) to
   (4+.3,-4);
%
 \draw (0,-4.3) node {\scalebox{.7}{$1$}};
 \draw (1,-4.3) node {\scalebox{.7}{$2$}};
 \draw (2,-4.3) node {\scalebox{.7}{$3$}};
 \draw (3,-4.3) node {\scalebox{.7}{$4$}};
 \draw (4,-4.3) node {\scalebox{.7}{$5$}};
\end{tikzpicture}
}
}
\right]
\right)^\ast
\;\;=\;\;
\bar a
\cdot
\left[
\scalebox{.8}{
\raisebox{-105pt}{
\begin{tikzpicture}
 \begin{scope}[shift={(0,-1)}]
 \begin{scope}[yscale=-1]
 \draw[thick, orangeii] (0,0) to node{$\phantom{AA}$} (1,0);
 \begin{scope}
   \clip (0,-.1) rectangle (.1,+.1);
   \draw[draw=orangeii, fill=orangeii] (0,0) circle (.07);
 \end{scope}
 \begin{scope}[shift={(1,0)}]
   \clip (-.1,-.1) rectangle (0,+.1);
   \draw[draw=orangeii, fill=orangeii] (0,0) circle (.07);
 \end{scope}
 \draw[thick, orangeii] (1,1) to node{$\phantom{AA}$} (2,1);
 \begin{scope}[shift={(1,1)}]
   \clip (0,-.1) rectangle (.1,+.1);
   \draw[draw=orangeii, fill=orangeii] (0,0) circle (.07);
 \end{scope}
 \begin{scope}[shift={(2,1)}]
   \clip (-.1,-.1) rectangle (0,+.1);
   \draw[draw=orangeii, fill=orangeii] (0,0) circle (.07);
 \end{scope}
 \draw[thick, orangeii] (2,-3) to (3,-3);
 \begin{scope}[shift={(2,-3)}]
   \clip (0,-.1) rectangle (.1,+.1);
   \draw[draw=orangeii, fill=orangeii] (0,0) circle (.07);
 \end{scope}
 \begin{scope}[shift={(3,-3)}]
   \clip (-.1,-.1) rectangle (0,+.1);
   \draw[draw=orangeii, fill=orangeii] (0,0) circle (.07);
 \end{scope}
 \draw[thick, orangeii]
   (1,-1)
   to
   node{\colorbox{white}{\hspace{-2pt}}}
   (3,-1);
 \begin{scope}[shift={(1,-1)}]
   \clip (0,-.1) rectangle (.1,+.1);
   \draw[draw=orangeii, fill=orangeii] (0,0) circle (.07);
 \end{scope}
 \begin{scope}[shift={(3,-1)}]
   \clip (-.1,-.1) rectangle (0,+.1);
   \draw[draw=orangeii, fill=orangeii] (0,0) circle (.07);
 \end{scope}
 \draw[thick, orangeii]
   (1,-2) to (4,-2);
 \draw (2,-2) node {\colorbox{white}{\hspace{-2pt}}};
 \draw (3,-2) node {\colorbox{white}{\hspace{-2pt}}};
 \begin{scope}[shift={(1,-2)}]
   \clip (0,-.1) rectangle (.1,+.1);
   \draw[draw=orangeii, fill=orangeii] (0,0) circle (.07);
 \end{scope}
 \begin{scope}[shift={(4,-2)}]
   \clip (-.1,-.1) rectangle (0,+.1);
   \draw[draw=orangeii, fill=orangeii] (0,0) circle (.07);
 \end{scope}
 \draw[thick, orangeii]
   (0,2) to (3,2);
 \draw (1,2) node {\colorbox{white}{\hspace{-2pt}}};
 \draw (2,2) node {\colorbox{white}{\hspace{-2pt}}};
 \begin{scope}[shift={(0,2)}]
   \clip (0,-.1) rectangle (.1,+.1);
   \draw[draw=orangeii, fill=orangeii] (0,0) circle (.07);
 \end{scope}
 \begin{scope}[shift={(3,2)}]
   \clip (-.1,-.1) rectangle (0,+.1);
   \draw[draw=orangeii, fill=orangeii] (0,0) circle (.07);
 \end{scope}
 \draw[ultra thick, darkblue] (0,3) to  (0,-4);
 \draw[ultra thick, darkblue] (1,3) to  (1,-4);
 \draw[ultra thick, darkblue] (2,3) to  (2,-4);
 \draw[ultra thick, darkblue] (3,3) to  (3,-4);
 \draw[ultra thick, darkblue] (4,3) to  (4,-4);
 \end{scope}
 \end{scope}
 \draw[->] (4+.3,-3) to
   (4+.3,-4);
%
 \draw (0,-4.3) node {\scalebox{.7}{$1$}};
 \draw (1,-4.3) node {\scalebox{.7}{$2$}};
 \draw (2,-4.3) node {\scalebox{.7}{$3$}};
 \draw (3,-4.3) node {\scalebox{.7}{$4$}};
 \draw (4,-4.3) node {\scalebox{.7}{$5$}};
\end{tikzpicture}
}
}
\right]
$$
\begin{proof}
  The statement evidently holds before quotienting out the
  2T-relations \eqref{2TRelationsOnHorizontalChordDiagrams}
  and 4T-relations \eqref{4TRelationOnHorizontalChordDiagrams}
  in
  \eqref{AlgebraOfHorizontalChordDiagrams}; and the reversal operation manifestly
  preserves these relations, hence preserves the ideals they generate, hence passes to the
  quotient.

  More abstractly, this star-involution is the involutory antipode of the
  Hopf algebra structure on the homology of loop spaces
  (\cite[p. 262]{MilnorMoore65})
  under the identification of
  horizontal chord diagrams with the homology of the loop space of
  configuration spaces of points (\cite[Thm. 4.1]{Kohno02});
  see around \eqref{FromCohomotopyToChordDiagramObservables} in
  \cref{PhysicsInterpretation} below for more on this perspective.
\end{proof}

\begin{prop}[Reversed chord diagrams give inverse permutations]
  \label{AssigningPermutationsToChordDiagramsIsStarHomomorphism}
  The function \eqref{MonoidHomomorphismFromHorizontalChordsToPermutations}
  that sends horizontal chord diagrams to permutations
  sends reversed chord diagrams \eqref{StarOperationOnHorizontalChordDiagrams}
  to inverse permutations:
  \begin{equation}
    \begin{tikzcd}[row sep=small]
      \mathcal{D}^{{}^{\mathrm{pb}}}_{\!N}
      \ar[
        rr,
        "\mathrm{perm}"
      ]
      \ar[
        d,
        "(-)^\ast"{left}
      ]
      &&
      \mathrm{Sym}
      \big(
        N
      \big)
      \ar[
        d,
        "(-)^{-1}"
      ]
      \\
      \mathcal{D}^{{}^{\mathrm{pb}}}_{\!N}
      \ar[
        rr,
        "\mathrm{perm}"
      ]
      &&
      \mathrm{Sym}
      \big(
        N
      \big)
    \end{tikzcd}
    {\phantom{AAA}}
    {\phantom{AAA}}
    \mathrm{perm}(D^\ast)
    \;=\;
    \big( \mathrm{perm}(D)  \big)^{-1}
  \end{equation}
\end{prop}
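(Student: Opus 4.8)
The plan is to verify the identity directly on the list presentation \eqref{HorizontalChordDiagramAsListOfChords} of horizontal chord diagrams, using three elementary facts: that the reversal operation \eqref{StarOperationOnHorizontalChordDiagrams} reverses the order of the chords in a diagram; that inversion in a group is an anti-homomorphism; and that each transposition $t_{ij}$ \eqref{ATransposition} is an involution.

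First I would fix a general chord diagram $D = (i_1 j_1)(i_2 j_2)\cdots(i_d j_d)$. By definition of the star-operation \eqref{StarOperationOnHorizontalChordDiagrams}, its reversal is $D^\ast = (i_d j_d)\cdots(i_2 j_2)(i_1 j_1)$, so applying the monoid homomorphism $\mathrm{perm}$ \eqref{MonoidHomomorphismFromHorizontalChordsToPermutations} yields $\mathrm{perm}(D^\ast) = t_{i_d j_d}\circ\cdots\circ t_{i_1 j_1}$. Next I would expand the right-hand side of the claim: $\big(\mathrm{perm}(D)\big)^{-1} = \big(t_{i_1 j_1}\circ\cdots\circ t_{i_d j_d}\big)^{-1} = t_{i_d j_d}^{-1}\circ\cdots\circ t_{i_1 j_1}^{-1}$, where the second equality is the anti-homomorphism property of group inversion. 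Since every transposition satisfies $t_{ij}^{-1} = t_{ij}$, the right-hand side collapses to $t_{i_d j_d}\circ\cdots\circ t_{i_1 j_1}$, which is precisely the expression for $\mathrm{perm}(D^\ast)$ found above; this establishes the commuting square.

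There is no genuine obstacle here: the whole content is the coincidence of ``reverse the order of the transpositions'' with ``invert the product,'' which holds exactly because the generators are involutions. If one prefers a more structural phrasing that avoids picking an explicit list, one may instead observe that both $\mathrm{perm}\circ(-)^\ast$ and $(-)^{-1}\circ\mathrm{perm}$ are anti-homomorphisms from the free monoid $\mathcal{D}^{{}^{\mathrm{pb}}}_{\!N}$ to $\mathrm{Sym}(N)$ which agree on the generating chords $(ij)$, both sending $(ij)$ to $t_{ij}$; by freeness they then agree on all of $\mathcal{D}^{{}^{\mathrm{pb}}}_{\!N}$. In this version the only point to check is the agreement on generators, which once more reduces to the involutivity $t_{ij}^{-1} = t_{ij}$.
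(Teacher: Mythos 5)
Your argument is correct and coincides with the paper's own (one-line) proof, which likewise reduces the claim to the definitions together with the involutivity $t_{ij}^{-1} = t_{ij}$ of transpositions; you have merely written out the details that the paper leaves implicit.
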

\begin{proof}
  This follows immediately from the definition
  \eqref{HorizontalChordDiagramAsListOfChords}
  and the fact that
  any transposition is its own inverse.
\end{proof}

\begin{example}[Perm is a star-monoid homomorphism]
  \label{PermSendsDStarDToPermDInversePermD}
  Since $\mathrm{perm}$ is a monoid homomorphism by
  construction \eqref{MonoidHomomorphismFromHorizontalChordsToPermutations}
  in Def. \ref{AlgebraOfAndWeightSystemsOnHorizontalChordDiagrams},
  Prop. \ref{AssigningPermutationsToChordDiagramsIsStarHomomorphism}
  may be read as saying that it is in fact a
  {\it star-monoid homomorphism}. In particular, we have:
  $$
    \begin{aligned}
      \mathrm{perm}
      \big(
        D_1^\ast \cdot D_2
      \big)
      \;=\;
      \mathrm{perm}(D_1)^{-1}
      \circ
      \mathrm{perm}(D_2)
      \,.
    \end{aligned}
  $$
\end{example}

\medskip
\noindent {\bf Quantum states.}
The following is the standard mathematical formulation of what are
often called  {\it mixed states} or {\it density matrices} in quantum physics,
subsuming, as a special case, the traditional pure states that may be identified
with elements of a Hilbert space.
\begin{defn}[State on a star-algebra. e.g. {\cite[\S I.1.1]{Meyer95}\cite[Def. 2.4]{Landsman17}}]
  \label{States}
  Given a star-algebra $(\mathcal{A}, (-)^\ast)$
  (Def. \ref{StarAlgebra}), a \emph{state} is a complex-linear function
  \vspace{-2mm}
  \begin{equation}
    \label{LinearFormOnAlgebra}
    \rho
    \;:\;
    \mathcal{A}
    \longrightarrow
    \mathbb{C}
  \end{equation}

  \vspace{-2mm}
\noindent  which satisfies:\\

  \vspace{-2mm}
  \begin{tabular}{llll}
    {\bf (1)} & (positivity):
      & $ \rho\big( A^\ast A \big) \geq 0 \in \mathbb{R} \subset \mathbb{C}$
      & for all $A \in \mathcal{A}$;
    \\
    {\bf (2)} & (normalization):
      & $\rho(\mathbf{1}) = 1$
      & for $\mathbf{1} \in \mathcal{A}$ the algebra unit.
  \end{tabular}\\

\end{defn}

\begin{remark}[S-Matrices -- part of the {\it GNS construction}, e.g. {\cite[Prop. 4.5.1 \& p. 270]{KadisonRingrose97}\cite[(II.5)]{BGQR13}}]
  \label{QuantumStatesAsPositiveSemiDefiniteBilinearForms}
  Given a star-algebra $(\mathcal{A}, (-)^\ast)$ (Def. \ref{StarAlgebra}),
  we may identify any linear form $\rho$ \eqref{LinearFormOnAlgebra}
  on the underlying vector space $\mathcal{A}$ with the following
  sesquilinear form
  \vspace{-2mm}
  \begin{equation}
    \label{InducedBilinearFormFromLinearFormOnStarAlgebra}
    \begin{tikzcd}[row sep=-2pt]
      \mathcal{A} \otimes \mathcal{A}
      \ar[
        rr,
        "\rho\left( (-)^\ast \cdot (-) \right)"
      ]
      &&
      \mathcal{C}
      \\
      (A_1, A_2)
      &\longmapsto&
      \rho
      \big(
        A_1^\ast \cdot A_2
      \big)
      \,.
    \end{tikzcd}
  \end{equation}

  \vspace{-2mm}
\noindent
  Observe that a linear form is a state (Def. \ref{States})
  precisely if its induced sesquilinear form 
  \eqref{InducedBilinearFormFromLinearFormOnStarAlgebra}
  is
  (normalized to $\rho(\mathbf{1}^\ast \cdot \mathbf{1}) = 1$ and)
  positive (semi-)definite:
  $$
    \mbox{$\rho(-)$ is a state}
    \;\;\;\;\;\Leftrightarrow\;\;\;\;\;
    \mbox{
      $
        \rho
        \big(
          (-)^\ast \cdot (-)
        \big)
      $
      is normalized and
      positive (semi-)definite.
    }
  $$
\end{remark}

\begin{remark}[Positivity]
  The point of Def. \ref{States} is the positivity condition
  (which might rather deserve to be called \emph{semi-positivity},
  by Rem. \ref{QuantumStatesAsPositiveSemiDefiniteBilinearForms};
  but \emph{positivity} is the established terminology here)
  while the normalization condition
  is just that: If $\rho$ is a (semi-)positive linear map
  with $\rho \neq 0$ then
  $\frac{1}{\rho(\mathbf{1})} \cdot \rho$ is a state.
\end{remark}

\begin{defn}[Weight systems that are quantum states]
  \label{NotionOfWeightSystemsThatAreQuantumStates}
  Here we say that a weight system on horizontal chord diagrams
  (Def. \ref{AlgebraOfAndWeightSystemsOnHorizontalChordDiagrams})
  {\it is a quantum state} if it is a state (Def. \ref{States})
  with respect to the canonical star-algebra structure on horizontal chord
  diagrams from Prop. \ref{StarStructureOnHorizontalChordDiagrams}.
\end{defn}

We will show that the fundamental weight systems
(Def. \ref{FundamentalWeightSystemOfgl2}) are quantum states (Def. \ref{NotionOfWeightSystemsThatAreQuantumStates})
by regarding them as kernels in geometric group theory, in Prop. \ref{FundamentalWeightSystemExpressedViaCayleyGraphDistance} below.
Notice that from any set of quantum states like this, we obtain
at least a convex hull of all their operator images as further states:

\begin{example}[Convex combinations of quantum states]
  \label{Mixture}
  For $k \in \mathbb{N}_+$
  the {\it mixture}
  of a $k$-tuple
  $\big(
    \rho_i : \mathcal{A} \to \mathbb{C}\big)_{1 \leq i \leq k}$
  of quantum states
  (Def. \ref{NotionOfWeightSystemsThatAreQuantumStates})
  for {\it probability distribution}
  $\big(p_i \in \mathbb{R}_{\geq 0}\big)_{1 \leq i \leq k}$,
  $\underset{i}{\sum}p_i = 1$,
  is the quantum state given by the convex linear combination
  $
    \underset{i}{\sum} p_i \cdot \rho_i
    \;\;\;\in\;
    \mathcal{A}^{\ast}
    \,.
  $
\end{example}
\begin{example}[Operator-state correspondence]
  \label{OperatorStateCorrespondence}
  For $\rho : \mathcal{A} \to \mathbb{C}$ any quantum state (Def. \ref{States}),
  every non-null observable $O \in \mathcal{A}$, $\rho(O^\ast O) \neq 0$
  induces another state $\rho_O$ given by
  $
    \rho_O
    \big(
      A
    \big)
    \;\coloneqq\;
    \tfrac{1}{\rho(O^\ast O)}
    \cdot
    \rho
    \big(
      O^\ast \cdot A \cdot O
    \big)
    \,.
  $
\end{example}

\subsection{Cayley distance kernels on symmetric groups}
\label{GeometricGroupTheory}

\noindent {\bf Cayley distance metric on symmetric groups.}
The following is at the heart of geometric group theory (e.g. \cite{DrutuKapovich18}).

\begin{defn}[Cayley distance (e.g. {\cite[p. 112]{Diaconis88}})]
  \label{CayleyDistance}
  The \emph{Cayley graph} of the symmetric group $\mathrm{Sym}(N)$
  is the undirected graph whose vertices are
  the group elements and which has exactly one edge between
  any pair of group elements if they differ by
  composition with a single transposition
  \eqref{ATransposition} on the right:
  $$
    \begin{tikzcd}[row sep=0pt, column sep=tiny]
      {}
      \ar[dr,-,dotted, very thick]
      & && &
      {}
      \\
      {}
      \ar[r,-,dotted,  very thick]
      &
      \sigma
      \ar[
       rr,-
      ]
      &
      {\phantom{A}}
      &
      \sigma \circ t_{i j}
      \ar[r,-,dotted,  very thick]
      \ar[ur,-,dotted,  very thick]
      \ar[dr,-,dotted,  very thick]
      &
      {}
      \\
      {}
      \ar[ur,-,dotted,  very thick]
      & && &
      {}
    \end{tikzcd}
  $$
 We denote  the corresponding \emph{Cayley graph distance function} by
  $$
    d_C
    \;:\;
    \begin{tikzcd}
      \mathrm{Sym}(N)
      \times
      \mathrm{Sym}(N)
      \ar[r]
      &
      \mathbb{N}
      \,,
    \end{tikzcd}
  $$
  hence:
  \begin{equation}
    \label{CayleyDistanceFunction}
    d_C
    (
      \sigma_1,\,
      \sigma_2
    )
    \;=\;
    d_C
    (
      e,\,
      \sigma_1^{-1} \circ \sigma_2
    )
    \;=\;
    \left\{
    \mbox{
      \begin{tabular}{l}
        minimal number $k$
                of transpositions
        \\
        $\big\{
          t_{i_1 j_1}, \cdots, t_{i_k j_k} \in \mathrm{Sym}(N)
        \big\}$
        \\
        such that
        \\
        $\sigma_1^{-1} \circ \sigma_2
        \,=\, t_{i_1 j_1} \circ \cdots \circ t_{i_k j_k}$\;.
      \end{tabular}
    }
    \right.
  \end{equation}
\end{defn}

\begin{example}[Cayley graph of $\mathrm{Sym}(3)$]
  \label{CayleyGraphOfSym3}
  The Cayley graph of the symmetric group on $N = 3$ elements, with
  edges for arbitrary transpositions, looks as follows:
\begin{equation}
\label{CayleyGraphForSym3}
\adjustbox{scale=.7}{
\begin{tikzcd}[row sep=-6pt, column sep=14pt]
  123
  \ar[rrrr,-]
  \ar[ddddd,-]
  \ar[ddrrr,-]
  &&
  &&
  132
  \ar[ddddd,-]
  \\
  {\phantom{A} \atop {\phantom{A} \atop {\phantom{A} \atop {\phantom{A} \atop {\phantom{A} \atop {\phantom{A} \atop {\phantom{A}}}}}}}}
  \\
  &&&
  \scalebox{.8}{$321$}
  \ar[dddr,-]
  \\
  &
  \scalebox{1.2}{231}
  \ar[ddl,-]
  \ar[uuurrr,-, crossing over]
  \ar[urr,-]
  \\
  {\phantom{A} \atop {\phantom{A} \atop {\phantom{A} \atop {\phantom{A} \atop {\phantom{A} \atop {\phantom{A} \atop {\phantom{A}}}}}}}}
  \\
  213
  \ar[rrrr,-]
  &&&&
  312
\end{tikzcd}
}
\end{equation}
Here, e.g., ``$231$'' is shorthand for the permutation
$\sigma \in \mathrm{Sym}(3)$
with  $\sigma(1) = 2$, $\sigma(2) =3$, $\sigma(3) = 1$.
If we order these 6 permutations to a linear basis for
$\mathbb{C}[\mathrm{Sym}(N)]$ as follows
$$
  \big[
    123
    ,\,
    213
    ,\,
    132
    ,\,
    321
    ,\,
    312
    ,\,
    231
  \big],
$$
then the matrix of Cayley distances \eqref{CayleyDistanceFunction}
between these is
\begin{equation}
  \label{MatrixOfCayleyDistancesOnSym3}
  \big[d_c\big]
  \;\;=\;\;
  \left[
  \begin{array}{cccccc}
    0 & 1 & 1 & 1  & 2 &  2
    \\
    1 & 0 & 2 & 2 & 1 & 1
    \\
    1 & 2 & 0 & 2 & 1 & 1
    \\
    1 & 2 & 2 & 0 & 1 & 1
    \\
    2 & 1 & 1 & 1 & 0 & 2
    \\
    2 & 1 & 1 & 1 & 2 & 0
  \end{array}
  \right].
\end{equation}

\end{example}

For later reference, we record the basic properties of the Cayley distance:

\begin{lemma}
  \label{LeftInvarianceOfCayleyDistance}
  The Cayley distance \eqref{CayleyDistanceFunction} is {\it left invariant}:
  $
    \underset{
      \sigma \in \mathrm{Sym}(N)
    }{\forall}
    \;
    d_C\big( \sigma \circ (-),\, \sigma \circ (-)  \big)
    \;=\;
    d_C\big(- ,\, - \big)
    \,.
  $
\end{lemma}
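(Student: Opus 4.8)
The plan is to exploit the fact that the definition in \eqref{CayleyDistanceFunction} already exhibits the Cayley distance as a function of the single group element $\sigma_1^{-1}\circ\sigma_2$, so that left invariance reduces to a one-line cancellation rather than any genuine computation with minimal transposition words.

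Concretely, fix $\sigma, \tau_1, \tau_2 \in \mathrm{Sym}(N)$. First I would recall from \eqref{CayleyDistanceFunction} that
$$
  d_C(\tau_1, \tau_2)
  \;=\;
  d_C\big(e,\, \tau_1^{-1}\circ\tau_2\big),
$$
so that $d_C$ depends on its two arguments only through the product $\tau_1^{-1}\circ\tau_2$. Next I would compute the corresponding product for the left-translated pair, using associativity and $\sigma^{-1}\circ\sigma = e$:
$$
  (\sigma\circ\tau_1)^{-1}\circ(\sigma\circ\tau_2)
  \;=\;
  \tau_1^{-1}\circ\sigma^{-1}\circ\sigma\circ\tau_2
  \;=\;
  \tau_1^{-1}\circ\tau_2.
$$
Substituting this back into the defining identity yields
$$
  d_C(\sigma\circ\tau_1,\, \sigma\circ\tau_2)
  \;=\;
  d_C\big(e,\, (\sigma\circ\tau_1)^{-1}\circ(\sigma\circ\tau_2)\big)
  \;=\;
  d_C\big(e,\, \tau_1^{-1}\circ\tau_2\big)
  \;=\;
  d_C(\tau_1, \tau_2),
$$
which is exactly the asserted left invariance.

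There is essentially no obstacle here: this is the general phenomenon that any word metric on a group, defined relative to a fixed generating set (here the transpositions $t_{ij}$), is automatically invariant under left translation, precisely because the metric factors through $\sigma_1^{-1}\circ\sigma_2$. I would only remark in passing that the analogous \emph{right} invariance would additionally require the generating set to be closed under conjugation, a property the transpositions do enjoy, but which is not needed for the statement at hand.
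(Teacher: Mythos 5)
Your proof is correct, and it is exactly the reasoning the paper leaves implicit: the paper states this lemma without proof, since the defining equation \eqref{CayleyDistanceFunction} already expresses $d_C(\sigma_1,\sigma_2)$ as a function of $\sigma_1^{-1}\circ\sigma_2$ alone, which your cancellation $(\sigma\circ\tau_1)^{-1}\circ(\sigma\circ\tau_2)=\tau_1^{-1}\circ\tau_2$ makes explicit. Your closing remark about right invariance requiring conjugation-closure of the generating set is accurate but not needed here.
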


\begin{lemma}[Cayley's formula, e.g. {\cite[p. 118]{Diaconis88}}]
  \label{CayleyDistanceFunctionInTermsOfCycles}
  The Cayley distance function \eqref{CayleyDistanceFunction}
  equals $N$ minus the number of cycles in the
  permutation:
  \begin{equation}
    \label{CayleyFormula}
    d_C(\sigma_1, \sigma_2)
    \;=\;
    N
    -
    \#\mathrm{cycles}(\sigma_1^{-1} \circ \sigma_2)\;.
  \end{equation}
\end{lemma}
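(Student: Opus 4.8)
The plan is to first reduce the two-variable statement to a one-variable one using left-invariance. By Lemma~\ref{LeftInvarianceOfCayleyDistance} we have $d_C(\sigma_1,\sigma_2) = d_C(e, \sigma_1^{-1}\circ \sigma_2)$, as already recorded in \eqref{CayleyDistanceFunction}; writing $\sigma := \sigma_1^{-1}\circ\sigma_2$, it therefore suffices to show that the minimal number of transpositions whose composite is $\sigma$ equals $N - \#\mathrm{cycles}(\sigma)$. Since $\#\mathrm{cycles}$ is invariant under conjugation and hence under inversion, it is immaterial on which side we compose transpositions.

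The combinatorial heart of the argument is the following \emph{cycle-count lemma}, which I would establish first: for any $\tau \in \mathrm{Sym}(N)$ and any transposition $t_{ij}$, the number $\#\mathrm{cycles}(\tau \circ t_{ij})$ differs from $\#\mathrm{cycles}(\tau)$ by exactly $1$. This is proved by tracking orbits and splits into two cases. If $i$ and $j$ lie in the same cycle of $\tau$, then composing with $t_{ij}$ cuts that cycle into two, raising the count by one; if they lie in two distinct cycles, composing with $t_{ij}$ splices them into a single cycle, lowering the count by one. In either case $\big|\#\mathrm{cycles}(\tau\circ t_{ij}) - \#\mathrm{cycles}(\tau)\big| = 1$.

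The lower bound then follows immediately. The identity $e$ has exactly $N$ cycles (every element is a fixed point). If $\sigma = t_{i_1 j_1}\circ\cdots\circ t_{i_k j_k}$, consider the partial products $p_0 = e$ and $p_r = p_{r-1}\circ t_{i_r j_r}$, so that $p_k = \sigma$. By the cycle-count lemma each step changes the number of cycles by $\pm 1$, whence $\big|\#\mathrm{cycles}(\sigma) - N\big| \leq k$. As $\#\mathrm{cycles}(\sigma) \leq N$, this gives $N - \#\mathrm{cycles}(\sigma) \leq k$, and since this holds for every such expression it holds for the minimal one, i.e. $N - \#\mathrm{cycles}(\sigma) \leq d_C(e,\sigma)$.

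For the matching upper bound I would exhibit an explicit expression of $\sigma$ as a product of $N - \#\mathrm{cycles}(\sigma)$ transpositions. Each cycle $(a_1\,a_2\,\cdots\,a_m)$ of $\sigma$ is a composite of $m-1$ transpositions (for instance $(a_1\,a_2)\circ(a_2\,a_3)\circ\cdots\circ(a_{m-1}\,a_m)$), while fixed points (length-$1$ cycles) contribute nothing. Concatenating these over all cycles of lengths $m_1, \ldots, m_c$ expresses $\sigma$ using $\sum_r (m_r - 1) = N - c = N - \#\mathrm{cycles}(\sigma)$ transpositions, so $d_C(e,\sigma) \leq N - \#\mathrm{cycles}(\sigma)$. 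Combining the two bounds yields the claimed equality. The only step requiring genuine care is the orbit case-analysis in the cycle-count lemma; everything else is bookkeeping.
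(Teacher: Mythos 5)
Your proof is correct, and it overlaps with the paper's on the upper bound while handling the lower bound by a genuinely different (and more complete) argument. Both you and the paper reduce to $\sigma_1 = e$ by left-invariance and both obtain $d_C(e,\sigma) \leq N - \#\mathrm{cycles}(\sigma)$ by writing each length-$m$ cycle as a product of $m-1$ transpositions. For the reverse inequality the paper simply asserts that a cyclic permutation of $k+1$ elements is the product of \emph{no fewer than} $k$ transpositions and then argues informally that the cycles' minimal counts add up; neither claim is actually justified there (a priori a shorter global factorization need not respect the cycle decomposition). Your cycle-count lemma --- that right-multiplication by a transposition changes $\#\mathrm{cycles}$ by exactly $\pm 1$, splitting a cycle when $i,j$ are in the same orbit and splicing two cycles when they are not --- closes exactly this gap: starting from $e$ with $N$ cycles, any factorization into $k$ transpositions forces $N - \#\mathrm{cycles}(\sigma) \leq k$, uniformly over all factorizations. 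This is the standard textbook route, it buys you a rigorous lower bound where the paper's version is only a sketch, and as a bonus the same lemma immediately yields the parity statement $\mathrm{sgn}(\sigma) = (-1)^{N - \#\mathrm{cycles}(\sigma)}$ that the paper invokes later in the proof of Lemma \ref{CuriousLemmaSigned}.
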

\begin{proof}
  Since both sides of the equation are invariant under left multiplication
  (Lemma \ref{LeftInvarianceOfCayleyDistance}),
  it is sufficient to show the statement for $\sigma_1 = e$, hence
  for any $\sigma = \sigma_2$.
  Here, notice first that any cyclic permutation of $k+1$
  elements is the product of no fewer than $k$ transpositions:
  \begin{equation}
    \label{CyclicPermutationAsCompositeOfTranspositions}
    \left(
    \begin{array}{ccccc}
      1 & 2 & 3 & \cdots & k + 1
      \\
      k + 1 & 1 & 2 & \cdots & k
    \end{array}
    \right)
    \;=\;
    \underset{
      \mbox{\tiny $k$ transpositions}
    }{
    \underbrace{
      t_{k, k-1}
        \,\circ\,
      \cdots
        \,\circ\,
      t_{3,2}
        \,\circ\,
      t_{2,1}
        \,\circ\,
      t_{1,k+1}
         }
    }\;,
  \end{equation}
  where we understand that for $k = 0$ the composite on the right is the
  neutral element.
  But every permutation $\sigma$ is the composite of such
  cyclic permutations,
  one for each of its cycles, with those in different cycles commuting with each other.
  Since \eqref{CyclicPermutationAsCompositeOfTranspositions}
  has one transposition fewer than the number of elements, this implies
  that for every cycle the minimum number of transpositions needed
  is reduced by one.
\end{proof}

\begin{lemma}[Cayley distance is preserved by inclusion of symmetric groups]
  \label{CayleyDistanceIsPreservedByInclusionOfSymmetricGroups}
  The canonical inclusion of symmetric groups
  $$
    \begin{tikzcd}
      \mathrm{Sym}(N)
      \ar[
        r,
        hook,
        "i"
      ]
      &
      \mathrm{Sym}(N + 1)
    \end{tikzcd}
  $$
  preserves Cayley distance (Def. \ref{CayleyDistance}):
  $$
    \underset{
      { \sigma_1, \sigma_2 }
      \atop
      {\in \mathrm{Sym}(N)}
    }{\forall}
    \;\;
    d_C( \sigma_1, \sigma_2  )
    \;=\;
    d_C\big( i(\sigma_1), i(\sigma_2)  \big)
    \,.
  $$
  In other words, the Cayley distance matrix
  $[d_C]$ of $\mathrm{Sym}(N)$
  is the principal submatrix of that of $\mathrm{Sym}(N+1)$ on the
  permutations in the image of the inclusion $i$.
\end{lemma}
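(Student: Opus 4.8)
The plan is to deduce the claim from Cayley's formula (Lemma \ref{CayleyDistanceFunctionInTermsOfCycles}), which expresses the Cayley distance purely through the cycle count and so trades the combinatorial minimization in \eqref{CayleyDistanceFunction} for a straightforward bookkeeping of cycles. The only structural facts I need are that the canonical inclusion $i \colon \mathrm{Sym}(N) \hookrightarrow \mathrm{Sym}(N+1)$ is a group homomorphism and that it acts by leaving the $(N+1)$th element fixed.

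First I would record that, since $i$ is a homomorphism, $i(\sigma_1)^{-1} \circ i(\sigma_2) = i\big(\sigma_1^{-1} \circ \sigma_2\big)$, so it suffices to compare the cycle counts of a permutation $\tau \coloneqq \sigma_1^{-1} \circ \sigma_2 \in \mathrm{Sym}(N)$ and of its image $i(\tau) \in \mathrm{Sym}(N+1)$. Because $i(\tau)$ fixes the element $N+1$, its disjoint-cycle decomposition is exactly that of $\tau$ together with the additional singleton cycle $\{N+1\}$; hence
\[
  \#\mathrm{cycles}\big( i(\tau) \big)
  \;=\;
  \#\mathrm{cycles}(\tau) + 1 .
\]

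Then I would substitute into Cayley's formula on each side. On $\mathrm{Sym}(N+1)$ the ambient size increases by one, giving
\[
  d_C\big( i(\sigma_1),\, i(\sigma_2) \big)
  \;=\;
  (N+1) - \#\mathrm{cycles}\big( i(\tau) \big)
  \;=\;
  (N+1) - \big( \#\mathrm{cycles}(\tau) + 1 \big)
  \;=\;
  N - \#\mathrm{cycles}(\tau)
  \;=\;
  d_C(\sigma_1, \sigma_2) ,
\]
where the last equality is Cayley's formula on $\mathrm{Sym}(N)$. The two $+1$ contributions---one from the larger ambient set, one from the new fixed point---cancel, which is the whole content of the statement. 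The principal-submatrix claim is then immediate: ordering the permutations of $\mathrm{Sym}(N)$ as a segment of a chosen ordering of $\mathrm{Sym}(N+1)$ via $i$, the just-proven identity says that the corresponding entries of the two Cayley distance matrices agree.

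I do not expect a genuine obstacle along this route. The only place where care would be needed arises if one insisted on arguing directly from the minimal-transposition definition \eqref{CayleyDistanceFunction} rather than from Cayley's formula: there one must rule out the a priori possibility that transpositions involving the new strand $N+1$ could produce a shorter factorization of $i(\tau)$ than any factorization of $\tau$ uses. Routing the argument through the cycle count sidesteps this entirely, which is why I would invoke Lemma \ref{CayleyDistanceFunctionInTermsOfCycles} at the outset.
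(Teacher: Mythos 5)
Your proposal is correct and follows essentially the same route as the paper's own proof: both observe that $\#\mathrm{cycles}\big(i(\sigma_1)^{-1}\circ i(\sigma_2)\big) = \#\mathrm{cycles}\big(\sigma_1^{-1}\circ\sigma_2\big) + 1$ (the new fixed point contributing a singleton cycle) and then let the two $+1$'s cancel in Cayley's formula. Your write-up merely makes explicit the homomorphism step and the cancellation that the paper leaves implicit.
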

\begin{proof}
  Observing that
  $
    \#\mathrm{cycles}
    \big(
      i(\sigma_1)^{-1} \circ i(\sigma_2)
    \big)
    =
    \#\mathrm{cycles}
    \big(
      \sigma_1^{-1} \circ \sigma_2
    \big)
    +
    1
    \,,
  $
  the claim follows by Cayley's formula (Lemma \ref{CayleyDistanceFunctionInTermsOfCycles}).
\end{proof}

\noindent
{\bf Cayley distance kernels on symmetric groups.} We now consider the corresponding kernels.

\begin{defn}[Cayley distance kernel]
  \label{CayleyDistanceKernel}
  The
  {\it Cayley distance kernel}
  at
  {\it inverse temperature} $\beta \in \mathbb{R}_{\geq 0}$
  is the function
  on pairs of permutations that is
  given by the exponential of the
  Cayley distance (Def. \ref{CayleyDistance}) weighted by $- \beta$:
  $$
    \begin{tikzcd}[row sep=0pt]
      e^{- \beta \cdot d_C(-,-)}
      \;:\;
      \mathrm{Sym}(N)
      \times
      \mathrm{Sym}(N)
      \ar[r]
      &
      \mathbb{R}
      \,.
    \end{tikzcd}
  $$
  Understood as a matrix, we naturally conflate this with its
  induced sesqui-linear form:
  \begin{equation}
    \label{CayleySesquilinearForm}
    \begin{tikzcd}[row sep=0pt]
      \mathbb{C}[\mathrm{Sym}(N)]
      \otimes
      \mathbb{C}[\mathrm{Sym}(N)]
      \ar[
        rr,
        "{
          \langle -,-\rangle_\beta
        }"
      ]
      &&
      \mathbb{C}
      \\
      \Big(\;
        \underset{
          \sigma_1 \in \mathrm{Sym}(N)
        }{\sum}
        a_{\sigma_1} \cdot \sigma_1
        ,\,
        \underset{
          \sigma_2 \in \mathrm{Sym}(N)
        }{\sum}
        b_{\sigma_2} \cdot \sigma_2
      \Big)
      &\longmapsto&
      {
        \underset{
          {\sigma_1, \sigma_2}
          \atop
          {\in \mathrm{Sym}(N)}
        }{\sum}
        \bar a_{\sigma_1}
          \cdot
             b_{\sigma_2}
          \cdot
        e^{ - \beta \cdot d_C(\sigma_1, \sigma_2) }
        \,.
      }
    \end{tikzcd}
  \end{equation}
\end{defn}

\begin{remark}[Related literature]
\label{ReferencesOnCayleyDistanceKernel}
The Cayley distance kernel (Def. \ref{CayleyDistanceKernel})
is mentioned, for instance, in
\cite[\S 4]{FlignerVerducci86}\cite[\S 4]{DiaconisHanlon92}\cite[p. xx]{FlignerVerducci93},
but has received less attention than
related kernels in geometric group theory.
Notably the closely related {\it Mallows kernel}
(see, e.g., \cite[\S 6B]{Diaconis88}), which is instead formed from the
{\it Kendall distance} $d_K$ given by the minimum number of {\it adjacent}
permutations, is widely studied and has recently been proven \cite{JiaoVert18}
to be positive definite, generally.
In contrast, the Cayley distance kernel may become indefinite for small $\beta$
(Example \ref{CayleyDistanceKernelOnSym3}) and its general dependency on
$\beta$ had previously remained unknown.
\end{remark}

We now relate Cayley distance kernels to the weight systems from \cref{WeightSystemsOnChordDiagrams}:
\begin{prop}[Fundamental $\mathfrak{gl}(n)$-weight system is Cayley distance kernel at
$\beta = \mathrm{ln}(n)$]
  \label{FundamentalWeightSystemExpressedViaCayleyGraphDistance}
  The fundamental $\mathfrak{gl}(n)$-weight system
  $w_{(\mathfrak{gl}(n),\mathbf{n})}$ (Def. \ref{FundamentalWeightSystemOfgl2}),
  regarded as a sesquilinear form \eqref{InducedBilinearFormFromLinearFormOnStarAlgebra}
  on horizontal chord diagrams of $N$ strands,
  equals the Cayley distance kernel (Def. \ref{CayleyDistanceKernel})
  at inverse temperature $\beta = \mathrm{ln}(n)$
  on the corresponding permutations \eqref{MonoidHomomorphismFromHorizontalChordsToPermutations}
  of $N$ elements:

  \begin{equation}
    w_{\mathbf{n}}
    \Big(
      \Big(
        \underset{i}{\sum} a_i [D_i]
      \Big)^\ast
      \cdot
      \Big(
        \underset{j}{\sum} b_j [D_j]
      \Big)
    \Big)
    \;\;=\;\;
    \underset{i,j}{\sum}
    \,
    \bar a_i
         b_j
      \cdot
    e^{
      -
      \mathrm{ln}(n)
        \cdot
      d_C
      \big(
        \mathrm{perm}(D_i),
        \mathrm{perm}(D_j)
      \big)
    }
    \,.
  \end{equation}
\end{prop}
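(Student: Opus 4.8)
The plan is to expand the sesquilinear form by (anti-)linearity and reduce everything to the evaluation of $w_{\mathbf{n}}$ on a single product $D_i^\ast \cdot D_j$, which I would then evaluate by chaining together three results already available above. First I would use the anti-linearity of the star-operation \eqref{StarOperationOnLinearCombinationsOfHorizontalChordDiagrams} together with the complex-linearity of the weight system \eqref{AWeightSystem} to rewrite the left-hand side as $\sum_{i,j} \bar a_i b_j \cdot w_{\mathbf{n}}\big([D_i]^\ast \cdot [D_j]\big)$. This already matches the coefficient structure $\sum_{i,j}\bar a_i b_j$ of the right-hand side, so it remains only to identify the single-diagram value $w_{\mathbf{n}}\big([D_i]^\ast \cdot [D_j]\big)$ with the kernel entry $e^{-\ln(n)\cdot d_C(\mathrm{perm}(D_i),\mathrm{perm}(D_j))}$.

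For that identification, the plan is to apply Cor. \ref{FundamentalWeightSystemInTermsOfCycles} to the product $D_i^\ast \cdot D_j$, giving $w_{\mathbf{n}}([D_i^\ast\cdot D_j]) = e^{\ln(n)\cdot(\#\mathrm{cycles}(\mathrm{perm}(D_i^\ast\cdot D_j)) - N)}$. Next I would invoke Ex. \ref{PermSendsDStarDToPermDInversePermD} (equivalently Prop. \ref{AssigningPermutationsToChordDiagramsIsStarHomomorphism}) to replace $\mathrm{perm}(D_i^\ast \cdot D_j)$ by $\mathrm{perm}(D_i)^{-1}\circ\mathrm{perm}(D_j)$. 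Finally, Cayley's formula (Lemma \ref{CayleyDistanceFunctionInTermsOfCycles}) rewrites $\#\mathrm{cycles}(\mathrm{perm}(D_i)^{-1}\circ\mathrm{perm}(D_j)) - N$ as $-d_C(\mathrm{perm}(D_i),\mathrm{perm}(D_j))$, which turns the exponent into exactly the claimed form. I would assemble these three substitutions into a single aligned display so that the bookkeeping is transparent.

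There is essentially no analytic obstacle here; once the three cited results are in place the argument is a substitution exercise. The one point worth stating explicitly — and in fact the conceptual heart of the proposition — is that the normalization factor $n^{-N}$ built into Def. \ref{FundamentalWeightSystemOfgl2} and recorded in Cor. \ref{FundamentalWeightSystemInTermsOfCycles} is precisely what cancels the additive constant $N$ appearing in Cayley's formula $d_C = N - \#\mathrm{cycles}$. It is this exact alignment of the representation-theoretic normalization with the group-theoretic offset that makes the inverse temperature come out to the clean value $\beta = \ln(n)$, rather than $\ln(n)$ up to an affine shift in the distance; I would flag this cancellation at the end as the reason the statement holds on the nose.
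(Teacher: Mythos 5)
Your proposal is correct and follows exactly the paper's own proof: expand by sesquilinearity, apply Cor.~\ref{FundamentalWeightSystemInTermsOfCycles} to $D_i^\ast \cdot D_j$, pass to permutations via Ex.~\ref{PermSendsDStarDToPermDInversePermD}, and finish with Cayley's formula (Lemma~\ref{CayleyDistanceFunctionInTermsOfCycles}). Your closing observation that the normalization $n^{-N}$ is precisely what absorbs the additive offset $N$ in $d_C = N - \#\mathrm{cycles}$ is a correct and worthwhile gloss, though the paper leaves it implicit.
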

\begin{proof}
We compute as follows:
\begin{equation}
\begin{aligned}
  w_{\mathbf{n}}
  \Big(
    \Big(
      \underset{i}{\sum} a_i [D_i]
    \Big)^\ast
    \cdot
    \Big(
      \underset{j}{\sum} b_j [D_j]
    \Big)
  \Big)
  &
  \;=\;
  \underset{
    i,j
  }{\sum}
  \;
  \bar a_i b_j
  \cdot
  w_{\mathbf{n}}
  \big(
    D_i^\ast \cdot D_j
  \big)
  \\
  &
  \;=\;
  \underset{
    i,j
  }{\sum}
  \;
  \bar a_i b_j
  \cdot
  e^{
    \mathrm{ln}(n)
      \cdot
    \Big(
      \#\mathrm{cycles}
      \big(
        \mathrm{perm}(D_i^\ast \cdot D_j)
      \big)
      -
      N
    \Big)
  }
  \\
  &
  \;=\;
  \underset{
    i,j
  }{\sum}
  \;
  \bar a_i b_j
  \cdot
  e^{
    \mathrm{ln}(n)
      \cdot
    \Big(
      \#\mathrm{cycles}
      \big(
        \mathrm{perm}(D_i^\ast)^{-1}
          \circ
        \mathrm{perm}(D_j)
      \big)
      -
      N
    \Big)
  }
  \\
  &
  \;=\;
  \underset{
    i,j
  }{\sum}
  \;
  \bar a_i b_j
  \cdot
  e^{
    -
    \mathrm{ln}(n)
      \cdot
    d_C
    \big(
      \mathrm{perm}(D_i),
      \mathrm{perm}(D_j)
    \big).
  }
\end{aligned}
\end{equation}
Here the first step is sesqui-linearity,
the second step is Cor. \ref{FundamentalWeightSystemInTermsOfCycles},
the third step is Ex. \ref{PermSendsDStarDToPermDInversePermD},
and the last step is Lemma \ref{CayleyDistanceFunctionInTermsOfCycles}.
\end{proof}

\medskip

It follows immediately
that the fundamental $\mathfrak{gl}(n)$-weight system
on $\mathcal{A}^{{}^{\mathrm{pb}}}_N$
is a quantum state
precisely if the Cayley distance kernel on $\mathrm{Sym}(N)$
is positive (semi-)definite at
$\beta = \mathrm{ln}(n)$:
\begin{equation}
  \label{FundamentalWeightSystemIsQuantumStateIffCayleyDistanceKernelIsPositiveSemiDefinite}
  \mbox{
    $w_{(\mathfrak{gl}(n), \mathbf{n})}$
    is a quantum state
  }
  {\phantom{AAA}}
  \Leftrightarrow
  {\phantom{AAA}}
  \mbox{
    $e^{- \mathrm{ln}(n) \cdot d_C}$
    is positive (semi-)definite
  }
  \,.
\end{equation}
\noindent
Therefore we now turn to analyzing the positivity of the Cayley distance kernel.

\medskip

\section{Positivity of the Cayley distance kernel}

We discuss the (non-/semi-)positivity of the Cayley distance kernel
(hence of its lowest eigenvalue)
in dependence of the inverse temperature parameter $\beta$.

\noindent Throughout, we take {\it semi-definite} to imply that there is {\it at least one}
vanishing eigenvalue.

\medskip

To start with, it is instructive to look at the first non-trivial case:

\begin{example}[Cayley distance kernel on $\mathrm{Sym}(3)$]
  \label{CayleyDistanceKernelOnSym3}
  In the case $N = 3$,
  with the matrix of Cayley distances given by \eqref{MatrixOfCayleyDistancesOnSym3}
  in Example \ref{CayleyGraphOfSym3},
  the eigenvalues of
  the corresponding matrix
  $\big[  e^{- \beta \cdot d_c} \big] $
  representing the
  Cayley distance kernel
  \eqref{CayleySesquilinearForm}
  are readily computed to be
  \vspace{-2mm}
  $$
    \frac{
      e^{2 \beta} \pm 3 e^\beta + 2
    }{
      e^{2 \beta}
    }
    \;\;\;
    \mbox{and}
    \;\;\;
    \frac{
      e^{2 \beta} - 1
    }{
      e^{2 \beta}
    }
    \,,
  $$
  where the first two have multiplicity 1, while the last has multiplicity 4.
  For the given domain of the parameter $\beta \in \mathbb{R}_{\geq 0}$
  all these eigenvalues are always positive, except for one which may
  change sign as $\beta$ varies:
  $$
    e^{2 \beta} - 3 e^\beta + 2
    \;\;\mbox{is}\;\;
    \left\{
    \def\arraystretch{1.1}
    \begin{array}{ccl}
      = 0 &\mbox{for}& \beta = \mathrm{ln}(1),
      \\
      < 0 &\mbox{for}& \beta \in \big(\mathrm{ln}(1), \mathrm{ln}(2)\big),
      \\
      = 0 &\mbox{for}& \beta = \mathrm{ln}(2),
      \\
      > 0 &\mbox{for}& \beta > \mathrm{ln}(2).
    \end{array}
    \right.
      $$
  It follows for the Cayley distance kernel on $\mathrm{Sym}(3)$ that:
  \begin{equation}
    \label{DefinitenessOfCayleyDistanceKernelOnSym3}
    \big[
      e^{- \beta \cdot d_C}
    \big]
    \;\;\mbox{is}\;\;
    \left\{
    \def\arraystretch{1.1}
    \begin{array}{ccl}
      \mbox{positive semi-definite} &\mbox{for}& \beta = \mathrm{ln}(1),
      \\
      \mbox{indefinite} &\mbox{for}& \beta \in \big(\mathrm{ln}(1), \mathrm{ln}(2)\big),
      \\
      \mbox{positive semi-definite} &\mbox{for}& \beta = \mathrm{ln}(2),
      \\
      \mbox{positive definite} &\mbox{for}& \beta > \mathrm{ln}(2).
    \end{array}
    \right.
  \end{equation}
\end{example}

In general, the spectrum of the Cayley distance kernel has an explicit expression
in terms of the representation theory of the symmetric group:

\begin{defn}[Irreducible characters of the symmetric groups]
  For $\lambda$ a partition of $N \in \mathbb{N}$, hence a weakly decreasing sequence of
  positive natural numbers that sum to $N$:
  \begin{equation}
    \label{Partition}
    \lambda
    \;=\;
    (\lambda_1 \geq \lambda_2 \geq  \cdots \geq \lambda_{\mathrm{rows}(\lambda)})
       \,,
       \;\;\;\;\;\;\;\;\;
    \underset{i}{\sum} \lambda_i \;=\; N
    \,,
    {\phantom{AAAAA}}
    \lambda_i \in \mathbb{N}_+
    \,,
  \end{equation}

  \vspace{-2mm}
\noindent  we write
  \begin{equation}
    \label{IrreducibleCharacter}
    \chi^{(\lambda)}(-)
    \;:=\;
    \mathrm{Tr}\big(S^{(\lambda)}(-)\big)
    \;:\;
    \mathrm{Sym}(N) \longrightarrow  \mathbb{C}
  \end{equation}
  for the irreducible character corresponding
  to the irreducible complex linear representation of
  $\mathrm{Sym}(N)$
  \begin{equation}
    \label{IrreducibleRepresentation}
    S^{(\lambda)} \;:\; \mathrm{Sym}(N) \longrightarrow  \mathrm{GL}(N,\mathbb{C})
  \end{equation}
  which is the {\it Specht module}
  labeled by $\lambda$ (e.g. \cite[\S 2.3]{Sagan01}).
\end{defn}

Observing that the exponentiated Cayley distance function from the origin
$$
  \sigma
    \;\longmapsto\;
  e^{- \beta \cdot d_C(e,\sigma)}
    \;=\;
  e^{- \beta \cdot N}
  e^{ \beta \cdot \#\mathrm{cycles}(\sigma) }
$$
is a class function
(manifestly so by Cayley's formula \eqref{CayleyFormula} used on the right),
the following Prop. \ref{CharacterFormulaForSpectrumOfCayleyDistanceKernel}
is the special case of a general character formula
for kernels on finite groups \cite[Thm. 1.1]{RKHS02}\cite[Cor. 5.4]{Kaski02}
(for just the eigenvalues this is due to \cite[Cor. 3]{DiaconisShahshahani81},
with a streamlined derivation given in \cite[Thm. 4.3]{FosterKriloff15},
generalizing a classical result for abelian groups \cite[Cor. 3.2]{Babai79}
following \cite{Lovasz75}):

\begin{prop}[Character formula for spectrum of Cayley distance kernel]
  \label{CharacterFormulaForSpectrumOfCayleyDistanceKernel}
  For all $N \in \mathbb{N}_+$ and $\beta \in \mathbb{R}_{\geq 0}$,
  the eigenvalues of the Cayley distance kernel $\big[ e^{- \beta \cdot d_C} \big]$
  are

\vspace{-2mm}
  \begin{enumerate}[{\bf (i)}]
  \setlength\itemsep{-2pt}
    \item
      indexed by the partitions $\lambda$ of $N$ \eqref{Partition};
    \item
      given by the formula
      \vspace{-3mm}
      \begin{equation}
        \label{CharacterFormulaForEigenvalues}
        \mathrm{EigVals}[e^{-\beta \cdot d_C}]_{\lambda}
        \;\;
          =
        \;\;
        \frac{
          e^{- \beta \cdot N}
        }{\chi^{(\lambda)}(e)}
        \underset{
          \sigma \in \mathrm{Sym}(N)
        }{\sum}
        e^{\beta \cdot \#\mathrm{cycles}(\sigma) }
        \cdot
        \chi^{(\lambda)}(\sigma)
        \,,
      \end{equation}

      \vspace{-3mm}
     \noindent
      where $\chi^{(\lambda)}$ is the corresponding irreducible character \eqref{IrreducibleCharacter};
    \item
     appearing with multiplicity $\big( \chi^{(\lambda)}(e)\big)^2$
     (the square of the dimension of the $\lambda$th irreducible representation);
    \item
      whose corresponding eigenvectors are the
      (complex conjugated) component functions
      $$
        \mathrm{EigVects}[e^{- \beta \cdot d_C}]_{\lambda, i, j}
        \;\;
          =
        \;\;
        \big(
          \bar S^{(\lambda)}_{i j}(\sigma)
        \big)_{\sigma \in \mathrm{Sym}(N)}
        \;\;
          \in
        \;\;
        \mathbb{C}
        \big(
          \mathrm{Sym}(N)
        \big)
      $$
      of the irreducible representations $S^{(\lambda)}$ \eqref{IrreducibleRepresentation}
      for all $1 \leq i,j \leq \chi^{(\lambda)}(e)$.
  \end{enumerate}
\end{prop}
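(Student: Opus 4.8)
The plan is to identify the kernel matrix with an element of the group algebra acting on itself and then diagonalize it via the Peter--Weyl decomposition of $\mathbb{C}[\mathrm{Sym}(N)]$. By Cayley's formula (Lemma \ref{CayleyDistanceFunctionInTermsOfCycles}) together with left-invariance (Lemma \ref{LeftInvarianceOfCayleyDistance}), the kernel depends on its arguments only through $\sigma_1^{-1}\sigma_2$:
$$
e^{-\beta \cdot d_C(\sigma_1,\sigma_2)}
\;=\;
f(\sigma_1^{-1}\sigma_2),
\qquad
f(\sigma) \;:=\; e^{-\beta N}\,e^{\beta\cdot \#\mathrm{cycles}(\sigma)}.
$$
Since $\#\mathrm{cycles}$ is constant on conjugacy classes, $f$ is a real, symmetric class function, and the kernel matrix $M_{\sigma_1,\sigma_2}=f(\sigma_1^{-1}\sigma_2)$ is exactly the matrix of the convolution operator $\sum_{s} f(s)\,R(s)$ for $R$ the right regular representation.

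Rather than invoke the abstract isotypic decomposition, I would verify the claimed eigenvectors directly, which establishes items (ii)--(iv) at once. Fix a unitary irreducible $\rho^{(\lambda)}$ with $d_\lambda := \chi^{(\lambda)}(e)$ and indices $1\le i,j\le d_\lambda$, and set $v_\sigma := \overline{\rho^{(\lambda)}_{ij}(\sigma)}$. Using that $f$ is real, substituting $\sigma_2=\sigma_1\tau$, and expanding $\rho^{(\lambda)}(\sigma_1\tau)=\rho^{(\lambda)}(\sigma_1)\rho^{(\lambda)}(\tau)$ entrywise, one computes
$$
\overline{(Mv)_{\sigma_1}}
\;=\;
\sum_{k}\rho^{(\lambda)}_{ik}(\sigma_1)\Big(\sum_{\tau} f(\tau)\,\rho^{(\lambda)}_{kj}(\tau)\Big)
\;=\;
\sum_{k}\rho^{(\lambda)}_{ik}(\sigma_1)\,\widehat{f}(\lambda)_{kj}.
$$
By Schur's lemma the Fourier coefficient $\widehat{f}(\lambda):=\sum_{\tau} f(\tau)\,\rho^{(\lambda)}(\tau)$ of a class function is a scalar matrix $c_\lambda\,\mathbf{1}$ with $c_\lambda=\tfrac{1}{d_\lambda}\sum_{\tau}f(\tau)\,\chi^{(\lambda)}(\tau)$; hence $\overline{(Mv)_{\sigma_1}}=c_\lambda\,\rho^{(\lambda)}_{ij}(\sigma_1)$ and, conjugating back, $(Mv)_{\sigma_1}=c_\lambda v_{\sigma_1}$ (with $c_\lambda$ real because $\mathrm{Sym}(N)$ has integer-valued characters). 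Substituting the explicit $f$ and $d_\lambda=\chi^{(\lambda)}(e)$ reproduces formula \eqref{CharacterFormulaForEigenvalues} verbatim.

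It then remains to confirm that these eigenvectors exhaust the spectrum. For each partition $\lambda$ of $N$ the $d_\lambda^2$ vectors $\big(\overline{\rho^{(\lambda)}_{ij}}\big)_{i,j}$ are mutually orthogonal by the Schur orthogonality relations, and across all $\lambda$ and index pairs $(i,j)$ they number $\sum_\lambda d_\lambda^2 = N! = \dim\mathbb{C}[\mathrm{Sym}(N)]$, so they form an orthogonal eigenbasis. This gives items (i), (iii), (iv): the eigenvalues are indexed by the partitions of $N$, the $\lambda$-indexed eigenvalue $c_\lambda$ carries multiplicity $d_\lambda^2=(\chi^{(\lambda)}(e))^2$, and the component functions of $\rho^{(\lambda)}$ are its eigenvectors. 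The one point demanding care is the bookkeeping of conventions---left versus right regular action, and the complex conjugate in $v_\sigma$---which the explicit computation above is designed to settle unambiguously rather than leaving it to be read off from $R\cong\bigoplus_\lambda d_\lambda\,\rho^{(\lambda)}$.
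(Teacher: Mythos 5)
Your proof is correct. Note that the paper does not actually prove this proposition itself: it observes that $e^{-\beta d_C(e,-)}$ is a class function and then cites the statement as a special case of the general character formula for (class-function) kernels on finite groups (Rockmore et al., Kaski, Diaconis--Shahshahani, Foster-Greenwood--Kriloff). Your argument --- realizing the kernel as the convolution operator $\sum_\tau f(\tau)R(\tau)$ for the right regular representation, applying Schur's lemma to the Fourier coefficient of the class function $f$, and counting $\sum_\lambda d_\lambda^2 = N!$ to see that the matrix coefficients exhaust the spectrum --- is precisely the standard proof of that cited general formula, so you have simply supplied in full the argument the paper outsources to its references, with the convention-checking (right action, complex conjugation) done correctly.
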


\begin{example}[Eigenvalues of unit multiplicity]
  \label{EigenvaluesOfMultiplicity1}
  The Cayley distance kernel on $\mathrm{Sym}(N)$ has
  exactly two eigenvalues of multiplicity 1, namely
  the homogeneous distribution (corresponding to the trivial irrep of $\mathrm{Sym}(N)$)
  and the signature-distribution (corresponding to the sign irrep)
  whose eigenvalues are the sum
  (necessarily positive) and the signed sum
  (for $\mathrm{sgn}(\sigma)$ the signature), respectively, over any
  one row of the Cayley distance kernel matrix:
  \begin{equation}
  \label{EigenvectorsOfMultiplicityOne}
  \hspace{-1cm}
  \def\arraystretch{1.5}
  \begin{tabular}{|c||c|c|}
    \hline
    $\lambda$
      &
    $\mathrm{EigVects}[e^{- \beta \cdot d_C}]_{\lambda}$
      &
    $\mathrm{EigVals}[e^{- \beta \cdot d_C}]_{\lambda}$
    \\
    \hline
    \hline
    $(N)$
    &
    $
    \big(
      1
    \big)_{\sigma \in \mathrm{Sym}(N)}
    $
    &
    $
      \underset{\sigma \in \mathrm{Sym}(N)}{\sum}
      e^{- \beta d_c(e,\sigma)}
      \;>\;
      0
    $
    \\
    \hline
    $(1 \geq \cdots \geq 1)$
    &
    $
    \big(
      \mathrm{sgn}(\sigma)
    \big)_{\sigma \in \mathrm{Sym}(N)}
    $
    &
    $
      \underset{\sigma \in \mathrm{Sym}(N)}{\sum}
      \mathrm{sgn}(\sigma)
      \cdot
      e^{- \beta d_c(e,\sigma)}
    $
    \\
    \hline
  \end{tabular}
  \end{equation}
  This follows from Prop. \ref{CharacterFormulaForSpectrumOfCayleyDistanceKernel}
  since
  symmetric groups have exactly these two 1-dimensional irreps
  (e.g. \cite[Ex. 2.3.6, 2.3.7]{Sagan01}\cite[\S 7.B(2)]{Diaconis88});
  but in these simple cases the eigenvalues are also readily seen
  using the left-invariance property
  of the Cayley distance kernel (Lemma \ref{LeftInvarianceOfCayleyDistance}).
\end{example}

\subsection{Indefinite phases}

\begin{lemma}[Cauchy interlace]
  \label{CauchyInterlace}
  Let $N_1 < N_2 \,\in \mathbb{N}$ and $\beta \in \mathbb{R}_{\geq 0}$.

  {\bf (i)} If $\big[e^{- \beta \cdot d_C}\big]$ is indefinite on $\mathrm{Sym}(N_1)$
  then it is indefinite on $\mathrm{Sym}(N_2)$.

  {\bf (ii)} If $\big[e^{- \beta \cdot d_C}\big]$ is positive definite on $\mathrm{Sym}(N_2)$
   then it is positive definite on $\mathrm{Sym}(N_1)$.
\end{lemma}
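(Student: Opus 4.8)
The plan is to reduce both statements to the classical Cauchy interlacing theorem for Hermitian matrices, using the fact (already established in Lemma \ref{CayleyDistanceIsPreservedByInclusionOfSymmetricGroups}) that the Cayley distance matrix of a smaller symmetric group sits inside that of a larger one as a principal submatrix.

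First I would observe that it suffices to treat the case $N_2 = N_1 + 1$: for a general gap $N_1 < N_2$ the inclusion $\mathrm{Sym}(N_1) \hookrightarrow \mathrm{Sym}(N_2)$ factors as a composite of the one-step inclusions, each of which preserves Cayley distance by Lemma \ref{CayleyDistanceIsPreservedByInclusionOfSymmetricGroups}, so the composite does too; equivalently, one applies the one-step result inductively. In either reading, the key consequence is that, after ordering the elements of $\mathrm{Sym}(N_2)$ so that those in the image of $\mathrm{Sym}(N_1)$ come first, the Cayley distance matrix $[d_C]$ of $\mathrm{Sym}(N_2)$ has the matrix $[d_C]$ of $\mathrm{Sym}(N_1)$ as its top-left principal block.

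Next I would note that the Cayley distance kernel (Def. \ref{CayleyDistanceKernel}) is obtained from $[d_C]$ by applying the entrywise function $x \mapsto e^{-\beta x}$, and that such an entrywise transformation respects the principal-block structure; hence the kernel matrix $\big[e^{-\beta \cdot d_C}\big]$ on $\mathrm{Sym}(N_1)$ is itself a principal submatrix of $\big[e^{-\beta \cdot d_C}\big]$ on $\mathrm{Sym}(N_2)$. Both matrices are real and symmetric, hence Hermitian, so Cauchy's interlacing theorem applies: writing $\alpha_1 \leq \cdots \leq \alpha_n$ for the eigenvalues of the larger ($\mathrm{Sym}(N_2)$) kernel and $\gamma_1 \leq \cdots \leq \gamma_m$ (with $m < n$) for those of the smaller ($\mathrm{Sym}(N_1)$) kernel, one has $\alpha_k \leq \gamma_k \leq \alpha_{k + n - m}$ for all $1 \leq k \leq m$; in particular $\alpha_1 \leq \gamma_1$ and $\gamma_m \leq \alpha_n$.

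Finally I would read off both claims from these two boundary inequalities. For {\bf (i)}: if the smaller kernel is indefinite then $\gamma_1 < 0 < \gamma_m$, whence $\alpha_1 \leq \gamma_1 < 0$ and $\alpha_n \geq \gamma_m > 0$, so the larger kernel has eigenvalues of both signs and is indefinite. For {\bf (ii)}: if the larger kernel is positive definite then $\alpha_1 > 0$, whence $\gamma_1 \geq \alpha_1 > 0$, so every eigenvalue of the smaller kernel is positive and it is positive definite. There is essentially no obstacle here beyond correctly invoking interlacing; the only points needing a moment's care are the bookkeeping that an entrywise function preserves the principal-submatrix relation, and the iteration used to pass from the one-step inclusion of Lemma \ref{CayleyDistanceIsPreservedByInclusionOfSymmetricGroups} to an arbitrary gap $N_2 - N_1$.
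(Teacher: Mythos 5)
Your proposal is correct and follows essentially the same route as the paper: establish that $\big[e^{-\beta\cdot d_C}\big]$ on $\mathrm{Sym}(N_1)$ is a principal submatrix of that on $\mathrm{Sym}(N_2)$ via Lemma \ref{CayleyDistanceIsPreservedByInclusionOfSymmetricGroups}, then apply Cauchy interlacing. The only cosmetic difference is that the paper uses just the special case $\mathrm{min}\big(\mathrm{EigVals}(A_2)\big)\leq\mathrm{min}\big(\mathrm{EigVals}(A_1)\big)$ obtained directly from the Rayleigh-quotient characterization, whereas you invoke the full interlacing inequalities---which incidentally makes your handling of indefiniteness in {\bf (i)} marginally more complete, since you exhibit the required positive eigenvalue of the larger kernel explicitly rather than relying on the (true but unstated) fact that the kernel, having unit diagonal, always has one.
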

\begin{proof}
  Since the Cayley distance kernel on $\mathrm{Sym}(N_1)$
  is a principal submatrix of that on $\mathrm{Sym}(N_2)$,
  by Lemma \ref{CayleyDistanceIsPreservedByInclusionOfSymmetricGroups},
  this follows from the general fact that for
  $A_1$ a principal submatrix of a symmetric hermitian matrix $A_2$,
  their lowest eigenvalues satisfy
  $$
    \mathrm{min}\big( \mathrm{EigVals}(A_2) \big)
    \;\leq\;
    \mathrm{min}\big( \mathrm{EigVals}(A_1) \big)
  $$
  (a direct consequence of
  \scalebox{.9}{$\mathrm{min}\big( \mathrm{EigVals}(A) \big)
    \!=\!
  \underset{\left\vert v \right\vert = 1 }{\mathrm{min}} \langle \bar v, A v \rangle$},
  and a simple case of Cauchy's interlace theorem, e.g. \cite{Hwang04}).
\end{proof}

\begin{prop}[Cayley distance kernel indefinite for $0 < \beta < \mathrm{ln}(2)$]
  \label{CayleyDistanceKernelNotPositiveForBetaSmallerLn2}
  For all $N$ (Nota. \ref{TheParameters}), the Cayley distance
  kernel ceases to be positive semi-definite as soon as $\beta < \mathrm{ln}(2)$:
  $$
    0 < \beta < \mathrm{ln}(2)
    \;\;\;\;\;\;\;
    \Rightarrow
    \;\;\;\;\;\;\;
    \underset{N \geq 2}{\forall}
    \mbox{
      \rm
      $e^{- \beta \cdot d_C}$
      is indefinite
      on $\mathrm{Sym}(N)$
    }
    .
  $$
\end{prop}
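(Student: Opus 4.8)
The plan is to establish the claim at the smallest relevant symmetric group and then propagate it upward by interlacing. Since the trivial representation always contributes the strictly positive row-sum eigenvalue (Example~\ref{EigenvaluesOfMultiplicity1}), to prove indefiniteness it suffices to exhibit a \emph{single} strictly negative eigenvalue of $[e^{-\beta\cdot d_C}]$.

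First I would treat the base case $N=3$. From the explicit spectrum computed in Example~\ref{CayleyDistanceKernelOnSym3}, the only eigenvalue that can change sign is
\[
  \frac{e^{2\beta}-3e^{\beta}+2}{e^{2\beta}}
  \;=\;
  (1-e^{-\beta})\,(1-2e^{-\beta}),
\]
and this is strictly negative exactly when $e^{-\beta}\in(\tfrac12,1)$, i.e.\ when $0<\beta<\ln 2$. Thus $[e^{-\beta\cdot d_C}]$ is indefinite on $\mathrm{Sym}(3)$ throughout the open interval.

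Next I would bootstrap to arbitrary $N>3$. By Lemma~\ref{CayleyDistanceIsPreservedByInclusionOfSymmetricGroups} the Cayley distance matrix of $\mathrm{Sym}(3)$ is a principal submatrix of that of $\mathrm{Sym}(N)$, so the Cauchy interlacing statement Lemma~\ref{CauchyInterlace}(i) transports indefiniteness from $\mathrm{Sym}(3)$ to $\mathrm{Sym}(N)$; this delivers the proposition for every $N\ge 3$. I would also record that the genuine base is $N=3$ rather than $N=2$: on $\mathrm{Sym}(2)$ the two eigenvalues are $1\pm e^{-\beta}$, both strictly positive for $\beta>0$, so $\mathrm{Sym}(2)$ is in fact positive definite --- consistent with the indefinite window $e^{\beta}\in(1,N-1)$ of the main theorem being empty until $N\ge 3$.

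As a more conceptual variant, and as a sanity check on the base case, I would note that the sign change can be read directly off the character formula of Prop.~\ref{CharacterFormulaForSpectrumOfCayleyDistanceKernel}: for the sign representation one has $\mathrm{sgn}(\sigma)=(-1)^{d_C(e,\sigma)}$ (Lemma~\ref{CayleyDistanceFunctionInTermsOfCycles}), so the associated eigenvalue collapses, through the cycle generating function $\sum_{\sigma}x^{\#\mathrm{cycles}(\sigma)}=x(x+1)\cdots(x+N-1)$, to the product $\prod_{j=1}^{N-1}\bigl(1-j\,e^{-\beta}\bigr)$. For $e^{-\beta}\in(\tfrac12,1)$ this has exactly one positive factor and is negative whenever $N$ is odd, recovering indefiniteness without interlacing in that case (the even-$N$ case, where the product is positive, is precisely where interlacing from $N=3$ is indispensable). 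The main obstacle is essentially absent: both ingredients --- the explicit $\mathrm{Sym}(3)$ spectrum and the monotonicity of the least eigenvalue under restriction to principal submatrices --- are already established, and the only point needing care is the harmless off-by-one at $N=2$ identified above.
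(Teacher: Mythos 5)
Your proof is essentially identical to the paper's, which likewise just feeds Example~\ref{CayleyDistanceKernelOnSym3} into Lemma~\ref{CauchyInterlace}; the character-formula variant via the sign representation is a nice cross-check but not needed. Your observation about $N=2$ is correct and worth recording: on $\mathrm{Sym}(2)$ the eigenvalues $1\pm e^{-\beta}$ are both positive for $\beta>0$, so the quantifier in the statement should really start at $N\geq 3$ (consistent with the interval $e^{\beta}\in(1,N-1)$ being empty for $N=2$).
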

\begin{proof}
  Use Example \ref{CayleyDistanceKernelOnSym3}
  in Lemma \ref{CauchyInterlace}.
\end{proof}

To proceed further, we need the following two results from enumerative combinatorics:
\begin{lemma}[First polynomial relation, {e.g. \cite[Prop. 1.3.7]{Stanley86}}]
\label{CuriousLemma}
The following holds as an equation of polynomials in $e^\beta$:
\begin{equation}
  \label{FirstPolynomialRelation}
  \underset{
    \sigma \in \mathrm{Sym}(N)
  }{
    \sum
  }
  e^{\beta \cdot \#\mathrm{cycles}(\sigma)}
  \;\;=\;\;
  \underoverset
    {k = 0}
    {N - 1}
    {\prod}
  \big(
    e^\beta
    +
    k
  \big)
\,.
\end{equation}
\end{lemma}
\begin{proof}
  Identify permutations with the marked lists  underlying their
  unique representatives in cycle notation, for which heads of cycles
  are the smallest elements in their cycle and cycles are ordered by their heads.
  Then observe that, in this guise, permutations are manifestly enumerated,
  starting from the empty such list, by iteratively, over $k = 1,2,3, \cdots, N$,
  including the element $k+1$ into the list,
  either adjoined to the right of the list if it is the head of a cycle
  (in which case it contributes a factor $e^\beta$ to $e^{\beta \cdot \# \mathrm{cycles}}$),
  or else inserted after one of the $k$ elements already in the list
  (in which case it contributes a factor of 1).
\end{proof}
\begin{lemma}[Second polynomial relation]
\label{CuriousLemmaSigned}
The following holds as an equation of polynomials in $e^\beta$:
\begin{equation}
  \label{SecondPolynomialEquation}
  \underset{
    \sigma \in \mathrm{Sym}(N)
  }{
    \sum
  }
  \mathrm{sgn}(\sigma)
    \cdot
  e^{\beta \cdot \#\mathrm{cycles}(\sigma)}
  \;\;=\;\;
  \underoverset
    {k = 0}
    {N - 1}
    {\prod}
  \big(
    e^\beta
    -
    k
  \big)
\,,
\end{equation}
where $\mathrm{sgn}(\sigma)$ denotes the signature of a permutation.
\end{lemma}
\begin{proof}
  We compute as follows:
$$
  \begin{aligned}
    \underoverset
      {k = 0}
      {N - 1}
      {\prod}
    \big(
      e^\beta - k
    \big)
    &
    \;=\;
    (-1)^N
    \underoverset
      {k = 0}
      {N - 1}
      {\prod}
      \big(
        -e^\beta + k
      \big)
    \\
    &
    \;=\;
    (-1)^N
    \underset{
      \sigma \in \mathrm{Sym}(N)
    }{\sum}
    (-e^\beta)^{ \#\mathrm{cycles}(\sigma) }
    \\
    &
    \;=\;
    \underset{
      \sigma \in \mathrm{Sym}(N)
    }{\sum}
    (-1)^{ N \,+\, \#\mathrm{cycles}(\sigma) }
    \cdot
    e^{\beta \cdot \#\mathrm{cycles}(\sigma) }
    \\
    &
    \;=\;
    \underset{
      \sigma \in \mathrm{Sym}(N)
    }{\sum}
    (-1)^{ \#\mathrm{cycles}_{ \scalebox{.4}{ev.length}  }(\sigma) }
    \cdot
    e^{\beta \cdot \#\mathrm{cycles}(\sigma) }
    \\
    &
    \;=\;
    \underset{
      \sigma \in \mathrm{Sym}(N)
    }{\sum}
    \mathrm{sgn}(\sigma)
    \cdot
    e^{\beta \cdot \#\mathrm{cycles}(\sigma) }
    \,.
  \end{aligned}
$$
Here the second step is Lemma \ref{CuriousLemma}.
The fourth step observes that if $N$ is even/odd, then there
must be an even/odd number of permutations of odd length,
so that the sign that remains is given by the number of permutations of
even length. But this is the signature, since
the number of transpositions making a cycle is one less than its length
\eqref{CyclicPermutationAsCompositeOfTranspositions}.
\end{proof}

Using this we may improve the characterization of indefiniteness in
Proposition \ref{CayleyDistanceKernelNotPositiveForBetaSmallerLn2}:
\begin{prop}
  \label{CayleyDistanceKernelIndefiniteAtNonIntegerExpInverseTemperatureBelowNMinusOne}
  The Cayley distance kernel
  on $\mathrm{Sym}(N)$ is indefinite for $e^\beta$ below $N-1$ and non-integer:
  \begin{equation}
    \label{IntervalsOfIndefiniteness}
    e^{\beta}
    \;\in\;
    (0,1) \cup (1,2) \cup \cdots (N-2,N-1)
    {\phantom{AAAA}}
      \Rightarrow
    {\phantom{AAAA}}
    \mbox{
      $e^{- \beta \cdot d_C}$
      is indefinite.
    }
  \end{equation}
\end{prop}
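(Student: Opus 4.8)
The plan is to exhibit, for each $\beta$ in the stated range, a single strictly negative eigenvalue of the kernel; since the eigenvalue attached to the trivial representation is always strictly positive (it is a sum of positive reals, by Example \ref{EigenvaluesOfMultiplicity1}), one negative eigenvalue already forces indefiniteness. The natural candidate is the sign-representation eigenvalue. By Example \ref{EigenvaluesOfMultiplicity1}, combined with Cayley's formula (Lemma \ref{CayleyDistanceFunctionInTermsOfCycles}) and the Second polynomial relation (Lemma \ref{CuriousLemmaSigned}), this eigenvalue on $\mathrm{Sym}(M)$ equals $e^{-\beta M}\prod_{k=0}^{M-1}(e^\beta - k)$, whose sign I can read off directly from the factored form.

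First I would observe that evaluating this on $\mathrm{Sym}(N)$ itself is inconclusive: writing $e^\beta \in (m, m+1)$ with $m \in \{0,1,\dots,N-2\}$, the factors $(e^\beta - k)$ are positive for $k \le m$ and negative for $k \ge m+1$, so the product has $N-1-m$ negative factors and its overall sign is $(-1)^{N-1-m}$, which is positive for half the values of $N$. The key step is therefore to descend to the smallest symmetric group in which exactly one factor is negative, namely $\mathrm{Sym}(m+2)$. There the product runs $\prod_{k=0}^{m+1}(e^\beta - k)$: the factors with $k \le m$ are positive while the single factor $k = m+1$ is negative (as $e^\beta < m+1$), so the sign eigenvalue on $\mathrm{Sym}(m+2)$ is strictly negative.

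Together with the positive trivial-representation eigenvalue, this shows $e^{-\beta \cdot d_C}$ is indefinite on $\mathrm{Sym}(m+2)$. The final step is to propagate this upward: since $m \le N-2$ gives $m+2 \le N$, the Cauchy interlace principle (Lemma \ref{CauchyInterlace}(i)) transfers indefiniteness from $\mathrm{Sym}(m+2)$ to $\mathrm{Sym}(N)$. Ranging over $m \in \{0,1,\dots,N-2\}$ then covers exactly the union $(0,1)\cup(1,2)\cup\cdots\cup(N-2,N-1)$ of \eqref{IntervalsOfIndefiniteness}, which completes the argument.

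I expect the main obstacle to be precisely the parity phenomenon flagged in the second paragraph: the sign eigenvalue oscillates with $N$, so a naive evaluation on $\mathrm{Sym}(N)$ does not settle the question. The resolution — locating the minimal $\mathrm{Sym}(m+2)$ in which the factored polynomial of Lemma \ref{CuriousLemmaSigned} has a single negative factor, and then invoking monotonicity of the least eigenvalue under passage to principal submatrices — is the conceptual heart of the proof; everything else is routine sign-counting in that product.
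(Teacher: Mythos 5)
Your proof is correct and rests on the same three ingredients as the paper's own argument: the sign-representation eigenvalue read off from Example \ref{EigenvaluesOfMultiplicity1} together with Lemma \ref{CuriousLemmaSigned}, the sign count of negative factors in $\prod_{k=0}^{M-1}\big(e^\beta-k\big)$, and the interlacing Lemma \ref{CauchyInterlace} to pass to larger symmetric groups. The only difference is organizational: the paper resolves the parity obstruction by noting that $\mathrm{Sym}(N)$ and $\mathrm{Sym}(N-1)$ give a negative sign-eigenvalue on complementary alternating intervals, whereas you descend for each interval $(m,m+1)$ to the minimal group $\mathrm{Sym}(m+2)$ in which exactly one factor is negative and then interlace upward --- a slightly more uniform packaging of the same argument.
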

\begin{proof}
  Observe that the Cayley distance kernel has the following eigenvalue
  \begin{equation}
    \label{EigenvalueOfSignatureDistribution}
    \begin{aligned}
      \mathrm{EigVals}[e^{-\beta \cdot d_C}]_{(1 \geq  \cdots \geq 1)}
      &
      \;=\;
      e^{- \beta \cdot N}
      \underset{
        \sigma \in \mathrm{Sym}(N)
      }{\sum}
      \mathrm{sgn}(\sigma)
      \cdot
      e^{ \beta \cdot \#\mathrm{cycles}(\sigma) }
      \\
      &
      \;=\;
      e^{- \beta \cdot N}
      \;\;
      \underoverset
        {k = 0}
        {N - 1}
        {\prod}
      \big(
        e^\beta - k
      \big)
      \,.
    \end{aligned}
  \end{equation}
  Here the first line
  is Example \ref{EigenvaluesOfMultiplicity1}
  expressed using Cayley's formula \eqref{CayleyFormula},
  and the second step is Lemma \ref{CuriousLemmaSigned}.
  Hence we find that the Cayley distance kernel always has an eigenvalue of
  the following sign:
  \begin{equation}
    \label{SignOfEigenvalueOfSignatureDistribution}
    \underoverset
      {k = 0}
      {N - 1}
      {\prod}
    \big(
      e^\beta - k
    \big)
    \;\;
    \mbox{is}
    \;
    \left\{
    \begin{array}{ccl}
      > 0 &\mbox{for}& e^\beta > N - 1,
      \\
      = 0 &\mbox{for}& e^\beta \in \{0,1, \cdots, N-1\},
      \\
      < 0 &\mbox{for}& e^{\beta} \in \cdots (N-4, N-3) \cup (N-2,N-1).
    \end{array}
    \right.
  \end{equation}
  Here the first two lines are immediate from the form of the polynomial;
  and with this the third line
  follows by observing that all roots of the polynomial have unit multiplicity, so that
  its sign must change whenever $e^{\beta}$ crosses one of its zeros.

  This shows that the Cayley distance kernel
  on $\mathrm{Sym}(N)$ has a negative eigenvalue
  at least on every
  {\it second} of the
  open intervals claimed. But the same argument applies to the
  kernel on $\mathrm{Sym}(N-1)$, to show that this has a negative eigenvalue on
  every {\it other}, remaining, open interval.
  Since
  the latter kernel is
  (by Lemma \ref{CayleyDistanceIsPreservedByInclusionOfSymmetricGroups})
  a principal submatrix of the former,
  Lemma \ref{CauchyInterlace}
  implies that the kernel
  on $\mathrm{Sym}(N)$
  has a negative eigenvalue on all the open intervals \eqref{IntervalsOfIndefiniteness},
  as claimed.
\end{proof}

\subsection{Semi-definite phases}


For our proof of the exceptional positive semi-definite phases of
the Cayley distance kernel in Prop. \ref{CayleyDistanceKernelPositiveSemiDefiniteAtLowIntegers} below, we need Frobenius' character formula for Schur polynomials,
recalled as Prop. \ref{FrobeniusFormula} below, and we need to know that
Schur polynomials count semistandard Young tableaux:

\begin{defn}[Schur polynomials (e.g. {\cite[Def. 4.4.1]{Sagan01}})]
  \label{SchurPolynomial}
  For $\lambda = (\lambda_1 \geq \cdots \geq \lambda_{\mathrm{rows}(\lambda)})$ a partition \eqref{Partition}
  \vspace{-2mm}
  \begin{enumerate}[{\bf (i)}]
  \setlength\itemsep{-2pt}
  \item
  a {\it semistandard Young tableau} $T$
  of {\it shape} $\left\vert T \right\vert = \lambda$
  is an array
  $
    \big(
    T_{i, j}
    \,
    \in
    \,
    \mathbb{N}_+
    \big)_{
      {1 \leq i \leq \mathrm{rows}(\lambda)}
      \atop
      { 1 \leq j \leq \lambda_i}
    }
  $
  of positive natural numbers
  such that $j_1 < j_2 \Rightarrow T_{i j_1} \leq T_{i j_2}$
  and $i_1 < i_2 \Rightarrow T_{i_1 j} < T_{i_2 j}$.
  We write
  \begin{equation}
    \label{SetsOfSemistandardYoungTableaux}
    \mathrm{ssYT}_\lambda
    \;\supset\;
    \mathrm{ssYT}_\lambda\!(n)
    \;\subset\;
    \mathrm{ssYT}_N(n)
  \end{equation}
  for, respectively,
  the sets of all ssYT of shape $\lambda$,
  with labels $T_{i,j} \leq n$,
  among all those with $N = \sum_i \lambda_i$ boxes.

 \item
   The {\it monomial} corresponding to an ssYT in the polynomial ring
   on a countable number of generators
  is
  \vspace{-1mm}
  $$
    x^T \;\coloneqq\; x^{\#1s(T)} x^{\# 2s(T)} \cdots
  $$

    \vspace{-4mm}
\noindent  with $\#1s(T)$ denoting the number of entries of $T$ labeled with the value 1, etc.

\item The {\it Schur polynomial} $s_\lambda$ in $n$ variables,
  indexed by the partition $\lambda$, is the
  sum of these monomials over all semistandard Young tableaux $T$ whose shape
  $\left\vert T \right \vert$ is $\lambda$ and whose labels are bounded as
  $T_{i,j} \leq n$:
  \vspace{-3mm}
  \begin{equation}
    \label{SchurPolynomialViaSumOverSemistandardYoungTableau}
    s_\lambda
    \big(
      x_1, x_2, \cdots, x_n
    \big)
    \;\;
    =
    \;\;
    \underset{
        T \in \mathrm{ssYT}_\lambda\!(n)`
    }{\sum}
    x^T
    \,.
  \end{equation}
  In particular, the value
  \begin{equation}
    \label{SchurPolynomialAtxiEqual1}
    s_\lambda
    \big(
      x_1 \!=\! 1,
      \cdots,
      x_n \!=\! 1
    \big)
    \;\;
    =
    \;\;
    \underset{
      T \in \mathrm{ssYT}_\lambda\!(n)
    }{\sum}
    \!\!\!\!
    1
    \;\;=\;\;
    \# \mathrm{ssYT}_\lambda\!(n)
  \end{equation}
  is the number of semistandard Young tableaux of shape $\lambda$ with labels
  $\leq n$.
  \end{enumerate}
\end{defn}
\begin{example}
  \label{PositivityOfNumberOfSemiStandardYoungTableaux}
  If $\lambda = (\lambda_1 \geq \cdots \geq_{\mathrm{rows}(\lambda)})$
  is a partition of $\underset{i}{\sum} \lambda_i = N$,
  then Def. \ref{SchurPolynomial} yields:
  $$
    s_\lambda
    \big(
      x_1 \!=\! 1,
      \cdots,
      x_{\mathrm{rows}(\lambda)} \!=\! 1
    \big)
    \;
      =
    \;
    \# \mathrm{ssYT}_\lambda\!(n)
    \;\;\mbox{is}\;\;
    \left\{
    \begin{array}{lcl}
      = 0 &\mbox{if}& n < \mathrm{rows}(\lambda)
      \\
      > 0 &\mbox{if}& n \geq \mathrm{rows}(\lambda)
      \,,
    \end{array}
    \right.
  $$
  because:

  \noindent
  1) by vertical strict monotonicity, an
  ssY tableau of shape $\lambda$ needs
  $n \geq \mathrm{rows}(\lambda)$ labels to fill its first column,

  \noindent
  2) while the weak horizontal monotonicity allows any labelling of the
   first column to be completed to all columns.

\end{example}

\begin{prop}[Character formula for Schur polynomials {\cite[Thm. 4.6.4]{Sagan01}}]
  \label{FrobeniusFormula}
  For $N \in \mathbb{N}_+$ and $\lambda$ a partition of $N$ \eqref{Partition},
  the Schur polynomial $s_\lambda$ \eqref{SchurPolynomialViaSumOverSemistandardYoungTableau}
  may be expressed as follows:
      \vspace{-2mm}
  \begin{equation}
    \label{SchurPolynomialViaFrobeniusFormula}
    \begin{aligned}
    &
    s_\lambda
    \big(
      x_1, x_2, \cdots, x_n
    \big)
    \\
    &
    \;=\;
    \tfrac{1}{N!}
    \underset{
      \sigma \in \mathrm{Sym}(N)
    }{\sum}
    \chi^{(\lambda)}(\sigma)
    \cdot
    \big(
      x_1^{\ell_1(\sigma)}
      +
      \cdots
      +
      x_n^{\ell_1(\sigma)}
    \big)
    \big(
      x_1^{\ell_2(\sigma)}
      +
      \cdots
      +
      x_n^{\ell_2(\sigma)}
    \big)
    \cdots
    \big(
      x_1^{\ell_{\#\mathrm{cycles}(\sigma)}(\sigma)}
      +
      \cdots
      +
      x_n^{\ell_{\#\mathrm{cycles}(\sigma)}(\sigma)}
    \big)
    \,,
    \end{aligned}
  \end{equation}

  \vspace{-2mm}
  \noindent
  where $\chi^{(\lambda)}$ denotes the $\lambda$th irreducible character \eqref{IrreducibleCharacter} and
  $\ell_k(\sigma)$ denotes the length of the $k$th longest cycle of $\sigma$.
\end{prop}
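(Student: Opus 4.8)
The plan is to recognize the claimed right-hand side as the image of the irreducible character $\chi^{(\lambda)}$ under the Frobenius characteristic map and to identify this image with the Schur polynomial $s_\lambda$. First I would rewrite the sum over $\mathrm{Sym}(N)$ by grouping permutations according to their cycle type. Writing $\mu$ for the cycle type (a partition of $N$) and $z_\mu$ for the order of the centralizer of any permutation of that type (so that exactly $N!/z_\mu$ permutations have cycle type $\mu$), the product $\prod_k \big(x_1^{\ell_k(\sigma)} + \cdots + x_N^{\ell_k(\sigma)}\big)$ depends only on $\mu$ and equals the power-sum symmetric polynomial $p_\mu := \prod_i p_{\mu_i}$ with $p_m := x_1^m + \cdots + x_N^m$. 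Hence the asserted identity is equivalent to
$$
  s_\lambda \;=\; \sum_{\mu} \frac{1}{z_\mu}\, \chi^{(\lambda)}_\mu \, p_\mu,
$$
where $\chi^{(\lambda)}_\mu$ is the value of $\chi^{(\lambda)}$ on the class $\mu$; this is the classical Frobenius formula.

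To prove it, I would equip the ring of symmetric functions with the Hall inner product determined by $\langle p_\mu, p_\nu\rangle = z_\mu\, \delta_{\mu\nu}$, and introduce the characteristic map $\mathrm{ch}$ sending a class function $f$ on $\mathrm{Sym}(N)$ to $\sum_\mu z_\mu^{-1} f_\mu\, p_\mu$; by construction the right-hand side above is exactly $\mathrm{ch}(\chi^{(\lambda)})$, so the goal becomes $\mathrm{ch}(\chi^{(\lambda)}) = s_\lambda$. The key analytic input is the Cauchy identity $\prod_{i,j}(1 - x_i y_j)^{-1} = \sum_\lambda s_\lambda(x)\, s_\lambda(y)$, which I would obtain directly from the semistandard-tableau definition \eqref{SchurPolynomialViaSumOverSemistandardYoungTableau} via the Robinson--Schensted--Knuth correspondence, together with its power-sum form $\prod_{i,j}(1 - x_i y_j)^{-1} = \sum_\mu z_\mu^{-1} p_\mu(x)\, p_\mu(y)$, which follows by taking the logarithm and expanding the exponential. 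Comparing the two expansions of the same kernel shows that the Schur polynomials form an orthonormal basis for the Hall inner product.

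With orthonormality in hand, the first orthogonality relation for the (real, integer-valued) irreducible characters of $\mathrm{Sym}(N)$ gives $\langle \mathrm{ch}(\chi^{(\lambda)}), \mathrm{ch}(\chi^{(\nu)})\rangle = \sum_\mu z_\mu^{-1}\chi^{(\lambda)}_\mu \chi^{(\nu)}_\mu = \delta_{\lambda\nu}$, so $\{\mathrm{ch}(\chi^{(\lambda)})\}_\lambda$ is itself an orthonormal set. Orthonormality alone does \emph{not} pin down $\mathrm{ch}(\chi^{(\lambda)})$ as $s_\lambda$ (it leaves the freedom of an orthogonal change of basis and of signs), and resolving this ambiguity is where I expect the real work to lie. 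The cleanest route I know is to compute $\mathrm{ch}$ on the permutation characters $\eta^\mu$ of the Young subgroups $\mathrm{Sym}(\mu_1)\times \cdots \subset \mathrm{Sym}(N)$, obtaining $\mathrm{ch}(\eta^\mu) = h_\mu$ (the complete homogeneous symmetric functions), and then to invoke Young's rule $\eta^\mu = \sum_\lambda K_{\lambda\mu}\,\chi^{(\lambda)}$ alongside the Kostka expansion $h_\mu = \sum_\lambda K_{\lambda\mu}\, s_\lambda$. Since the Kostka matrix $(K_{\lambda\mu})$ is unitriangular with respect to the dominance order, these two triangular systems have the same unique solution, forcing $\mathrm{ch}(\chi^{(\lambda)}) = s_\lambda$ and completing the proof. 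The non-negativity of all monomial coefficients of $s_\lambda$ that is actually used downstream is then immediate from the manifestly non-negative definition \eqref{SchurPolynomialViaSumOverSemistandardYoungTableau}.
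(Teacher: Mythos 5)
The paper does not prove this proposition at all: it is quoted verbatim from Sagan's book (Thm.~4.6.4 there) and used as a black box, so there is no in-paper argument to compare against. Your proposal is a correct and essentially complete sketch of the standard proof, and in fact it follows the same route as the cited source: regroup the sum by cycle type to recognize the right-hand side as $\mathrm{ch}(\chi^{(\lambda)}) = \sum_\mu z_\mu^{-1}\chi^{(\lambda)}_\mu p_\mu$, show $\mathrm{ch}$ is an isometry for the Hall inner product via the two expansions of the Cauchy kernel and character orthogonality, and then break the residual orthogonal ambiguity by computing $\mathrm{ch}$ on the Young-subgroup permutation characters and matching the two unitriangular Kostka expansions. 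You correctly flag that orthonormality alone is insufficient and that the last step is where the content lies; the only point worth making explicit is that Young's rule must be taken with a proof independent of the Frobenius formula (as in Sagan's Chapter~2, where it is established combinatorially before symmetric functions appear), so that no circularity enters. The closing remark that non-negativity of the monomial coefficients of $s_\lambda$ is immediate from the tableau definition \eqref{SchurPolynomialViaSumOverSemistandardYoungTableau} is exactly the property the paper actually exploits in Lemma~\ref{NonNegativeEigenvaluesOfCayleyDistanceKernelAtLowIntegers}.
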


\begin{lemma}[Eigenvalues of Cayley distance kernel at $e^{\beta} \in \mathbb{N}_+$ count
 semi-stable Young tableaux]
\label{EigenvaluesOfCayleyDistanceKernelCountSemiStandardYoungTableau}
  $\,$

  \noindent
  For $e^\beta = n \in \mathbb{N}_+$, the eigenvalues
  \eqref{CharacterFormulaForEigenvalues} of the
  Cayley distance kernel
  count semi-standard Young tableaux (Def. \ref{SchurPolynomial}), in that
  for any partition \eqref{Partition} $\lambda$ of $N$ we have:
  \begin{equation}
    \mathrm{EigVals}[e^{- \mathrm{ln}(n) \cdot d_C}]_\lambda
    \;\;=\;\;
    \frac{1}{n^N}
    \frac{
      N !
    }{
      \chi^{(\lambda)}(e)
    }
    \cdot
    \# \mathrm{ssYT}_\lambda\!(n)
    \,.
  \end{equation}
\end{lemma}
\begin{proof}
  We compute as follows:
  $$
    \begin{aligned}
      \mathrm{EigVals}[e^{-\mathrm{ln}(n) \cdot d_C}]_\lambda
      &
      \;=\;
      \frac{
        e^{- \mathrm{ln}(n)\cdot N}
      }{
        \chi^{(\lambda)}(e)
      }
      \underset{\sigma \in \mathrm{Sym}(N)}{\sum}
      \chi^{(\lambda)}(\sigma)
        \cdot
      n^{\# \mathrm{cycles}(\sigma)}
      \\
      &
      \;=\;
      \frac{
        e^{- \mathrm{ln}(n)\cdot N}
      }{
        \chi^{(\lambda)}(e)
      }
      \underset{\sigma \in \mathrm{Sym}(N)}{\sum}
      \chi^{(\lambda)}(\sigma)
        \cdot
      \big(
        n \cdot 1^{\ell_1(\sigma)}
      \big)
      \big(
        n \cdot 1^{\ell_2(\sigma)}
      \big)
      \cdots
      \big(
        n \cdot 1^{\ell_{\#\mathrm{cycles}(\sigma)}(\sigma)}
      \big)
      \\
      &
      \;=\;
      N!
      \frac{
        e^{- \mathrm{ln}(n)\cdot N}
      }{
        \chi^{(\lambda)}(e)
      }
      \cdot
      s_\lambda
      \big(
        x_1 \!=\! 1,
        \cdots,
        x_n \!=\! 1
      \big)
      \\
      &
      \;=\;
      N!
      \frac{
        e^{- \mathrm{ln}(n)\cdot N}
      }{
        \chi^{(\lambda)}(e)
      }
      \,
      \cdot
      \,
      \# \mathrm{ssYT}_\lambda\!(n)
      \,.
    \end{aligned}
  $$

  \vspace{-1mm}
  \noindent
  Here the first line is the
  character formula for kernel spectra \eqref{CharacterFormulaForEigenvalues},
  the third step is the character formula for Schur polynomials \eqref{SchurPolynomialViaFrobeniusFormula}
  and the last step is their counting property \eqref{SchurPolynomialAtxiEqual1}.
\end{proof}
\begin{remark}[Alternative proof]
  Alternatively, with the closely related
  formula \cite[Prop. 2.4]{GnedinGorinKerov11}
  \begin{equation}
    \label{GnedinGorinKerovFormula}
    n^{\#\mathrm{cycles}(-)}
    \;:=\;
    \underset{
      T \in \mathrm{ssYT}_N(n)
    }{\sum}
    \chi^{ (\left\vert T \right\vert)}(-)
    \,,
  \end{equation}
  Lemma \ref{EigenvaluesOfCayleyDistanceKernelCountSemiStandardYoungTableau}
  follows by the following computation:
  $$
  \begin{aligned}
    \mathrm{EigVals}[e^{- \mathrm{ln}(n) \cdot d_C}]_\lambda
    &
    \;=\;
    \tfrac{e^{-\mathrm{ln}(n) \cdot N}}{\chi^{(\lambda)}(e)}
    \underset{\sigma \in \mathrm{Sym}(N)}{\sum}
    n^{ \# \mathrm{Cycles}(\sigma)   }
    \cdot
    \bar \chi^{(\lambda)}(\sigma)
    \\
    &
    \;=\;
    \tfrac{e^{-\mathrm{ln}(n) \cdot N}}{\chi^{(\lambda)}(e)}
    \underset{
      {T \in}
      \atop
      {\mathrm{ssYT}_N(n)}
    }{\sum}
    \;
    \underset{
      {\sigma \in}
      \atop
      {\mathrm{Sym}(n)}
    }{\sum}
    \chi^{ \left\vert T \right\vert }(\sigma)
    \cdot
    \bar \chi^{(\lambda)}(\sigma)
    \\
    &
    \;=\;
    \tfrac{n!/n^N}{\chi^{(\lambda)}(e)}
    \underset{
      {T \in}
      \atop
      {  \mathrm{ssYT}_N(n)  }
    }{\sum}
    \delta^{ \left\vert T\right\vert, \lambda }
    \,.
  \end{aligned}
$$
Here the first line is again the character formula \eqref{CharacterFormulaForEigenvalues}
from Prop. \ref{CharacterFormulaForSpectrumOfCayleyDistanceKernel},
shown under complex conjugation (which does not change the real eigenvalue).
The second step inserts \eqref{GnedinGorinKerovFormula}
and the last step applies Schur orthogonality (e.g. \cite[Thm. 2.12]{FultonHarris91}).
\end{remark}

\begin{prop}[Positivity of Cayley distance kernel at log-integral inverse temperature]
  \label{CayleyDistanceKernelPositiveSemiDefiniteAtLowIntegers}
  $\,$

  \noindent
  For $e^\beta := n \in \{1, 2, \cdots, N-1\}$
  the Cayley distance kernel $e^{- \beta \cdot d_C}$ on $\mathrm{Sym}(N)$
  is positive semi-definite,
  while for $e^\beta \in \{N, N+1, \cdots \}$ it is positive definite.
  Hence \eqref{FundamentalWeightSystemIsQuantumStateIffCayleyDistanceKernelIsPositiveSemiDefinite}
  all fundamental weight systems are quantum states.
\end{prop}
\begin{proof}
  By Lemma \ref{EigenvaluesOfCayleyDistanceKernelCountSemiStandardYoungTableau},
  all eigenvalues at these temperatures are non-negative,
  and with Example \ref{PositivityOfNumberOfSemiStandardYoungTableaux}
  all are positive for $n \geq N$, while for $n < N$
  at least the eigenvalue labeled by the
  sign representation $\lambda = (1 \geq \cdots \geq 1)$ \eqref{EigenvaluesOfMultiplicity1}
  takes the value 0
  (as seen already in \eqref{SignOfEigenvalueOfSignatureDistribution}).
\end{proof}

\subsection{Definite phase}

We establish a sharp lower bound for the inverse temperature $\beta$ above which the
Cayley distance kernel is always positive definite
(Prop. \ref{CayleyDistanceKernelPositiveDefiniteForSufficientlyLargeBeta} below;
for log-integral inverse tempteratures this is already the statement of Prop. \ref{CayleyDistanceKernelPositiveSemiDefiniteAtLowIntegers}).
The argument
via Stanley's combinatorial {\it hook-content formula}
in the following Lemma \ref{CayleyDistanceKernelEigenvaluesViaHookContent}
was kindly pointed out to us by A. Abdesselam;
it improves on an earlier proof of ours of a loose lower bound via
the Gershgorin circle theorem.

\begin{prop}[Hook-content formula {\cite[Thm. 15.3]{Stanley71}\cite[Thm. 7.21.2]{Stanley99}}]
  \label{HookContentFormula}
  The number of semi-standard Young tableau \eqref{SetsOfSemistandardYoungTableaux}
  of shape $\lambda$ and
  with labels $\leq n$ is given by
  $$
    \# \mathrm{ssYT}_\lambda(n)
    \;\;=\;\;
    \underset{
      { 1 \leq i \leq \mathrm{rows}(\lambda) }
      \atop
      { 1 \leq j \leq \lambda_j }
    }{\prod}
    \frac{
      n + j - i
    }{
      \ell \mathrm{hook}_\lambda\!(i,j)
    }
    \,,
  $$
\end{prop}
\noindent
where the product is over all boxes $(i,j)$ of the underlying Young diagram,
and $\ell \mathrm{hook}_\lambda\!(i,j) \in \mathbb{N}_+$ denotes the ``hook length''
at position $(i,j)$, hence the sum of the number of boxes to the right
and below the box, plus one for the box itself.

\begin{lemma}[Eigenvalues of Cayley distance kenrel in terms of hook-content]
  \label{CayleyDistanceKernelEigenvaluesViaHookContent}
  For all $\beta \in \mathbb{R}_+$, the $\lambda$th eigenvalue
  \eqref{CharacterFormulaForEigenvalues}
  of the Cayley distance kernel on $\mathrm{Sym}(N)$ is given by
  \begin{equation}
    \label{CayleyDistanceKernelEigenvaluesExpressedViaHookContent}
    \mathrm{EigVals}[e^{- \beta \cdot d_C}]_\lambda
    \;=\;
    \frac
      { N! }
      {\chi^{(\lambda)}(e) }
    e^{ - \beta \cdot N }
    \underset{
      { 1 \leq i \leq \mathrm{rows}(\lambda) }
      \atop
      { 1 \leq j \leq \lambda_j }
    }{\prod}
    \frac
      { e^\beta + j - i }
      { \ell\mathrm{hook}_\lambda\!(i,j) }
    \,.
  \end{equation}
\end{lemma}
\begin{proof}
  Observe that for $e^\beta = n \in \mathbb{N}_+$
  we have
  \begin{equation}
    \label{EqualityOfEigenvaluesWithHookContentForLogIntegralInverseTemperature}
    \frac
      {\chi^{(\lambda)}(e)}
      {N!}
    \cdot
    n^N
    \cdot
    \mathrm{EigVals}[e^{-\mathrm{ln}(n)\cdot d_C}]_\lambda
    \;=\;
    \#\mathrm{ssYT}_\lambda(n)
    \;=\;
    \underset{
      { 1 \leq i \leq \mathrm{rows}(\lambda) }
      \atop
      { 1 \leq j \leq \lambda_j }
    }{\prod}
    \frac{
      n + j - i
    }{
      \ell \mathrm{hook}_\lambda\!(i,j)
    }
    \,,
  \end{equation}
  where the first equation is from Lemma \ref{EigenvaluesOfCayleyDistanceKernelCountSemiStandardYoungTableau}
  and the second equation from Prop. \ref{HookContentFormula}.
  But this equation \eqref{EqualityOfEigenvaluesWithHookContentForLogIntegralInverseTemperature}
  is the specialization to integral values
  $e^\beta = n$ of the following more general equation,
  which is equivalent to the equation \eqref{CayleyDistanceKernelEigenvaluesExpressedViaHookContent}
  that we need to prove:
  \begin{equation}
    \label{GeneralEqualityOfEigenvaluesWithHookContent}
    \frac
      {\chi^{(\lambda)}}
      {N!}
    \cdot
    (e^\beta)^N
    \cdot
    \mathrm{EigVals}[e^{-\mathrm{ln}(n)\cdot d_C}]_\lambda
    \;=\;
    \underset{
      { 1 \leq i \leq \mathrm{rows}(\lambda) }
      \atop
      { 1 \leq j \leq \lambda_j }
    }{\prod}
    \frac{
      e^{\beta} + j - i
    }{
      \ell \mathrm{hook}_\lambda\!(i,j)
    }
    \,.
  \end{equation}
  Now observing that both sides of \eqref{GeneralEqualityOfEigenvaluesWithHookContent}
  are polynomials in $e^\beta$ -- the right side manifestly so and the
  left hand side by \eqref{CharacterFormulaForEigenvalues} --
  the general equality follows already by the fact that it holds at infinitely many
  distinct values,
  by \eqref{EqualityOfEigenvaluesWithHookContentForLogIntegralInverseTemperature}.
\end{proof}
\begin{prop}\label{CayleyDistanceKernelPositiveDefiniteForSufficientlyLargeBeta}
  The Cayley distance kernel $e^{- \beta \cdot d_C}$
  on $\mathrm{Sym}(N)$ is positive definite for all $e^\beta > N - 1$.
\end{prop}
\begin{proof}
  From Lemma \ref{CayleyDistanceKernelEigenvaluesViaHookContent}
  it is manifest that all eigenvalues are positive as soon as
  $
    e^\beta
    >
    \!\!\!\!
    \underset{
      {\lambda \in \mathrm{Part}(N)}
      \atop
      { 1 \leq i \leq \mathrm{rows}(\lambda) }
    }{\mathrm{max}}
    \!\!\!
    (i - 1)
    \;=\;
    N - 1
    \,.
  $
\end{proof}

\medskip

\subsection{Explicit eigenvalues}

As a byproduct, we have now obtained 
fully explicit polynomial expressions for all the eigenvectors of the
Cayley distance kernel,
using the following classical fact from
combinatorial representation theory:
\begin{prop}[Hook formulas for symmetric and linear groups -- e.g. {\cite[Thm. 20.1]{James78} and \cite[(C.27)]{Sternberg94}}]
  \label{HookFormulasForDimensionsOfIrrepsOfSymmetricAndLinearGroups}
  $\,$

  \noindent
  For $\lambda$ a partition \eqref{Partition} of $N \in \mathbb{N}_+$,
  with corresponding complex irrep $S^{(\lambda)}$  of $\mathrm{Sym}(N)$
  \eqref{IrreducibleRepresentation}
  and corresponding complex irrep $V^{(\lambda)}$ of
  $\mathrm{SL}(N, \mathbb{C})$,
  hence polynomial complex irrep of $\mathrm{GL}(n,\mathbb{C})$
  (e.g. \cite[p. 114]{Fulton97} with \cite[\S 5.8]{Sternberg94}),
  their dimensions are given by the
  {\it hook length formula} and the
  {\it hook-content formula}
  (Prop. \ref{HookContentFormula}), as:

  \begin{equation}
  \mbox{
  \def\arraystretch{2}
  \begin{tabular}{|c|c|}
    \hline
    \rm
    hook length formula for $\mathrm{Sym}(N)$
    &
    \rm
    hook-content formula for $\mathrm{SL}(N,\mathbb{C})$
    \\
    \hline
    \hline
    $
    \mathrm{dim}_{\mathbb{C}}
    \big(
      S^{(\lambda)}
    \big)
    \;=\;
    N!
    \underset{
      { 1 \leq i \leq \mathrm{rows}(\lambda) }
      \atop
      { 1 \leq j \leq \lambda_i }
    }{\prod}
    \frac{1}{ \ell \mathrm{hook}_\lambda(i,j) }
  $
  &
  $
    \mathrm{dim}_{\mathbb{C}}
    \big(
      V^{(\lambda)}
    \big)
    \;=\;
    \underset{
      { 1 \leq i \leq \mathrm{rows}(\lambda) }
      \atop
      { 1 \leq j \leq \lambda_i }
    }{\prod}
    \frac{
      N + j - i
    }{ \ell \mathrm{hook}_\lambda(i,j) }
  $
  \\
  \hline
  \end{tabular}
  }
  \end{equation}
\end{prop}
\noindent
\begin{prop}[Explicit eigenvalues of the Cayley distance kernel]
  \label{ExplicitEigenvaluesOfCayleyDistanceKernel}
  The eigenvalues \eqref{CharacterFormulaForEigenvalues} of the Cayley distance kernel
  $e^{- \beta \cdot d_C}$ on $\mathrm{Sym}(N)$
  have the following equivalent expressions:
  \begin{equation}
   \label{ExplicitFormulaForEigenvectorsOfCayleyDistanceKernel}
   \begin{aligned}
     \mathrm{EigVals}[e^{- \beta \cdot d_C}]_\lambda
     \;\;\;=\;\;\;
     e^{- \beta \cdot N}
    \underset{
      { 1 \leq i \leq \mathrm{rows}(\lambda) }
      \atop
      { 1 \leq j \leq \lambda_i }
    }{\prod}
    \big(
      e^\beta + j - i
    \big)
     \;\;\;\;\;\;\;\;
       \underset{
         \mathclap{
         e^\beta = N
         }
       }{=}
     \;\;\;\;\;\;\;\;
     \frac{N!}{ e^{\beta \cdot N} }
     \cdot
     \frac{
       \mathrm{dim}_{\mathbb{C}}\big( V^{(\lambda)} \big)
     }{
       \mathrm{dim}_{\mathbb{C}}\big( S^{(\lambda)} \big)
     }
    \,.
   \end{aligned}
  \end{equation}
\end{prop}
\begin{proof}
Use Prop. \ref{HookFormulasForDimensionsOfIrrepsOfSymmetricAndLinearGroups}
with
Lemma \ref{CayleyDistanceKernelEigenvaluesViaHookContent}.
\end{proof}
\begin{remark}[Alternative proof via Jucys-Murphy theory]
  The first statement in Prop. \ref{ExplicitEigenvaluesOfCayleyDistanceKernel}
  may be obtained alternatively as follows (we thank D. Speyer for pointing this out):
  In terms of the {\it Jucys-Murphy elements} in the group algebra
  $$
    J_k
    \;:=\;
    \underset{
      1 \leq i < n
    }{\sum}
    (i,k)
    \;\;
    \in
    \;
    \mathbb{C}[\mathrm{Sym}(n)]
  $$
  the Cayley distance kernel, regarded as a linear endomorphism on
  $\mathbb{C}[\mathrm{Sym}(n)]$, may be factored as
  \begin{equation}
    \label{CayleyDistanceKernelFactoredIntoJucysMurphyElements}
    e^{ \beta N }
    [e^{- \beta \cdot d_C}]
    \;\;
    =
    \big(
      e^{\beta} + J_1
    \big)
      \cdot
    \big(
      e^{\beta} + J_2
    \big)
      \cdots
    \big(
      e^{\beta} + J_n
    \big)
      \cdot
    \;\;\;\;
    \in
    \;
    \mathrm{End}
    \big(
      \mathbb{C}[\mathrm{Sym}(n)]
    \big)
  \end{equation}
  (as one sees inductively by representing permutations as
  products over sequences of transpositions making cycles as in \eqref{CyclicPermutationAsCompositeOfTranspositions}).
  But by \cite{Jucys71} (recalled in \cite[(12)]{Jucys74})
  and \cite[(3.18)]{Murphy81},
  the $J_k$ have a joint basis of eigenvectors
  $v_{T, \mu}$ labeled, in particular, by standard Young tableau $T \in \mathrm{sYT}_N$,
  with joint eigenvalues equal to
  $$
    \mathrm{EigVals}[J_k]_{T, \mu}
    \;=\;
    (j - i)
    \;\;\;\;\;
    \mbox{for $T_{i,j} = k$}
    \,.
  $$
  Plugging this into \eqref{CayleyDistanceKernelFactoredIntoJucysMurphyElements},
  using that in a standard Young tableaux every element in
  $\{1, \cdots, N\}$ appears exactly once as a label,
  yields the first form of \eqref{ExplicitFormulaForEigenvectorsOfCayleyDistanceKernel}.
\end{remark}
\begin{example}
  For $\lambda = (N)$ and $\lambda = (1 \geq \cdots \geq 1)$
  (Example \ref{EigenvaluesOfMultiplicity1}),
  equation \eqref{ExplicitFormulaForEigenvectorsOfCayleyDistanceKernel}
  reproduces the expressions \eqref{FirstPolynomialRelation}
  and \eqref{SecondPolynomialEquation}, respectively.
\end{example}

\medskip

\section{Weight systems as quantum states}
\label{PhysicsInterpretation}

We close by briefly explaining the impact of
Thm. \ref{FundamentalglNWeightSystemsAreQuantumStates} on
current questions in string/M-theory theory,
following our discussion in \cite{SS19b} to which we refer for
full details and further pointers.

\medskip

\noindent {\bf Chord diagram observables from Hypothesis H.}
While informal considerations of quantum physics of branes in string theory
has proven to be a rich source for mathematical insights in quantum topology,
the underlying mathematical formulation of non-perturbative brane physics itself
(``M-theory'') has remained wide open (see \cite[p. 3 \& 6]{SS19b} for pointers).
Recently we have explored the {\it Hypothesis H}
\cite[\S 2.5]{Sati13}\cite{FSS19b}\cite{FSS19c}\cite{SS19a}\cite{SS20M5BraneAnomalyCancellation}
\cite{SS20CharacterInTwistorialCohomotopy}\cite{SS21}
that the proper mathematical formulation of the {\it C-field}
-- which is the only field expected in M-theory, besides the
field of (super-)gravity -- is as a cocycle in
(twisted) {\it Cohomotopy theory}. We have shown (\cite[\S 2]{SS19b})
how such a hypothesis
implies that (the topological sector of) the {\it phase space }
of $N$ probe $D6 \!\!\perp\!\! D8$-brane intersections (in an ambient flat spacetime)
is homotopy-equivalent to the based loop space of the configuration space
of $N$ ordered points in $\mathbb{R}^3$:
\vspace{-1mm}
\begin{equation}
  \label{LoopSpaceOfConfigurationSpace}
  \Omega
  \Big(
    \underset{
      \mathclap{\scalebox{.5}{$\{1,\!\cdots\!\!N\}$}}
    }{
      \mathrm{Conf}
    }
    (\mathbb{R}^3)
  \Big)
  \,.
\end{equation}

\vspace{-1mm}
\noindent
This implies (\cite[\S 2.5]{SS19b})
that the higher homotopical {\it observables} on such brane systems, conceptualized
as the homology of the phase space, is (by \cite[Thm. 4.1]{Kohno02})
nothing but the algebra of horizontal chord diagrams $\mathcal{A}^{{}^{\mathrm{pb}}}_{N}$
from Def. \ref{AlgebraOfAndWeightSystemsOnHorizontalChordDiagrams},
as shown in the following diagram:

\vspace{-4.5mm}
\begin{equation}
\label{FromCohomotopyToChordDiagramObservables}
\hspace{-5mm}
  \hspace{-3cm}
  \begin{tikzcd}[row sep=1pt, column sep=23pt]
    \mathclap{
    \mbox{
      \tiny
      \color{darkblue}
      \bf
      \begin{tabular}{c}
        weight
        \\
        systems
      \end{tabular}
    }
    }
    &
    \mathclap{
    \mbox{
      \tiny
      \color{darkblue}
      \bf
      \begin{tabular}{c}
        horizontal
        \\
        chord diagrams
      \end{tabular}
    }
    }
    &
    \mathclap{
    \mbox{
      \tiny
      \color{darkblue}
      \bf
      \begin{tabular}{c}
        homology of
        \\
        loop space of
        \\
        configuration space of
        \\
        ordered points in $\mathbb{R}^3$
      \end{tabular}
    }
    }
    &
    \mathclap{
    \mbox{
      \tiny
      \color{darkblue}
      \bf
      \begin{tabular}{c}
        loop space of
        \\
        configuration space of
        \\
        ordered points in $\mathbb{R}^3$
      \end{tabular}
    }
    }
    &
    \mathclap{
    \mbox{
      \tiny
      \color{darkblue}
      \bf
      \begin{tabular}{c}
        loop space of
        \\
        diff. Cohomotopy cocycle space of
        \\
        spacetime transversal to
        \\
        intersecting codim 3/codim 1-branes
      \end{tabular}
    }
    }
    \\
    \underset{\mathclap{\scalebox{.6}{$N \!\in\! \mathbb{N}$}}}{\bigoplus}
    \;
    \big(
      \mathcal{A}^{{}^{\mathrm{pb}}}_N
    \big)^\ast
    \ar[
      r,
      <->
    ]
    &
    \underset{\mathclap{\scalebox{.6}{$N \!\in\! \mathbb{N}$}}}{\bigoplus}
    \;
    \mathcal{A}^{{}^{\mathrm{pb}}}_N
    \ar[
      r,
      phantom,
      "\simeq"{description}
    ]
    \!\!\!\!\!\!
    &
    \!\!\!\!\!\!
    H_\bullet
    \left(
      \;
      \underset{\mathclap{\scalebox{.6}{$N \!\!\in\!\! \mathbb{N}$}}}{\sqcup}
      \;
      \Omega \underset{\mathclap{\scalebox{.5}{$\{1,\!\cdots\!\!, N\}$}}}{\mathrm{Conf}}(\mathbb{R}^3)
    \right)
    &
    \;\;
    \underset{\mathclap{\scalebox{.6}{$N \in \mathbb{N}$}}}{\sqcup}
    \;
    \Omega \underset{\mathclap{\scalebox{.5}{$\{1,\!\cdots\!\!, N\}$}}}{\mathrm{Conf}}(\mathbb{R}^3)
    \ar[l,|->]
    \ar[
      r,
      phantom,
      "\simeq"{description}
    ]
    \!\!\!\!\!\!
    &
    \!\!\!\!\!\!
    \Omega
    \,
    \boldpi^4_{\mathrm{diff}}
    \left(
      \begin{aligned}
        &
        (\mathbb{R}^3)^{\mathrm{cpt}}
          \wedge
        (\mathbb{R}^1)_{+}
        \\
        \cup
        \!\!\!\!\!\!
        &
        (\mathbb{R}^3)_{+\;}
         \! \wedge
        (\mathbb{R}^1)^{\mathrm{cpt}}
      \end{aligned}
    \right)
    \\
    &
    {}
    \ar[
      r,
      phantom,
      "{
        \mbox{
          \tiny
          \color{purple}
          \bf
          quantum observables
        }
      }"
    ]
    &
    {}
    &
    {}
    \ar[
      r,
      phantom,
      "{
        \mbox{
          \tiny
          \color{purple}
          \bf
          phase space (topological sector)
        }
      }"{near start}
    ]
    &
    {}
    \\
    \mathllap{
        \Big(
    \;
    }
    \underset{\mathclap{\scalebox{.6}{$N \!\in\! \mathbb{N}$}}}{\bigoplus}
    \;
    \big(
      \mathcal{A}^{{}^{\mathrm{pb}}}_N
    \big)^\ast
    \mathrlap{
    \Big)_{
        \mathrm{normalized}
        \atop
        {\& \; \mathrm{positive}}
    }}
    \ar[uu,hook]
    \\
    \vspace{-1cm}
    \mathclap{
    \mbox{
      \tiny
      \color{purple}
      \bf
        quantum states
    }
    }
    \\
    {\phantom{A}}
    \\
    {\phantom{A}}
    \\
    {\phantom{A}}
  \end{tikzcd}
  \hspace{-3cm}
\end{equation}
\vspace{-2cm}

\noindent
(Here $H_\bullet(-)$ is ordinary homology with complex coefficients,
$\boldpi^4_{\mathrm{diff}}(-)$ is a presheaf of pointed mapping spaces into the 4-sphere
\cite[\S 2.3]{SS19b};
$(-)^{\mathrm{cpt}}$ is one-point compactification and $(-)_+$ is disjoint union with a base point.)

\medskip

\noindent {\bf Chord diagrams in stringy quantum physics.}
While it was well-known that chord diagrams organize the quantum observables
of perturbative Chern-Simons theory
(Vassiliev knot invariants,
\cite{BarNatan91}\cite{Kontsevich93}\cite{BarNatan95}\cite{AF96} \cite{BarNatanStoimenow96}),
we observed in
\cite[\S 4]{SS19b} that chord diagrams moreover govern several
more recent
proposals for aspects of intersecting brane physics, including:

\vspace{-.1cm}
\begin{itemize}

\vspace{-.2cm}
\item[{\bf (i)}]
the fuzzy/non-commutative geometry
of D-brane intersections seen via the non-abelian Dirac-Born-Infeld (DBI)
action functional
(\cite[\S 3.2]{RST04}, review in \cite[\S A]{MPRS06}\cite[\S 4]{McNamara06});

\vspace{-.2cm}
\item[{\bf (ii)}]
several quantum many-body models for brane/bulk holography:

\vspace{-.1cm}
\begin{itemize}

\vspace{-.2cm}
\item[{\bf (a)}]
dimer/bit-thread models for quantum error correction codes
(\cite{JGPE19}\cite{Yan20}, review in \cite[\S 4.2]{JahnEisert21});

\vspace{-.1cm}
\item[{\bf (b)}]
scattering amplitudes in bulk duals of the SYK model
(\cite{BerkoozNarayanSimon18}\cite{BINT18}, review in \cite{Narovlansky19}).

\end{itemize}

\end{itemize}
\vspace{-3mm}

\noindent This confluence of occurrences of chord diagrams in
quantum brane physics
(which seems to previously have gone unnoticed; e.g.
the authors of \cite{JGPE19}\cite{Yan20} refer to
chord diagrams as ``dimer'' or ``bit-thread'' networks)
finds, assuming Hypothesis H, a natural explanation and unification from the
result \eqref{FromCohomotopyToChordDiagramObservables}
that chord diagrams indeed constitute the
fundamental (topological) quantum observables on
intersecting quantum brane systems.

\medskip

\noindent
{\bf Weight systems as quantum states of branes.}
This allows us to proceed further and
next ask for a rigorous characterization, assuming Hypothesis H, of
possible
{\it quantum states of intersecting brane systems},
by asking for weight systems which are quantum states in the precise sense of
Def. \ref{NotionOfWeightSystemsThatAreQuantumStates}
--
and this is our Question \ref{TheQuestion}.

\medskip

\noindent {\bf Bound state of 2 M5-branes.}
In particular, we may now rigorously ask, assuming Hypothesis H,
whether two M5-branes may form a {\it bound state} of coincident branes
-- a statement
that is widely expected to be true and which is at the heart of some of the
deepest conjectures in contemporary string/M-theory,
but for which no actual theory existed.
\vspace{-4mm}
\begin{itemize}

\vspace{2mm}
\item[{\bf (i)}]
In \cite[\S 4.9]{SS19b} we explained how
the would-be bound state of $N^{(\mathrm{M5})}$ coincident
M5-branes (specifically: transversal M5-branes in a pp-wave background)
should correspond, under \eqref{FromCohomotopyToChordDiagramObservables}, to the
Lie algebra weight system $w_{(\mathfrak{gl}(2), \mathbf{N}^{(\mathrm{M5})} )}$
\eqref{LieAlgebraWeightSystems}
for the $N^{(\mathrm{M5})}$-dimensional irrep of
$\mathfrak{gl}(2)$,
whence the would-be bound state of 2 M5-branes corresponds to the
{\it fundamental} weight system  $w_{ (\mathfrak{gl}(2), \mathbf{2}) }$
from Def. \ref{FundamentalWeightSystemOfgl2}.

\vspace{-2mm}
\item[{\bf (ii)}]
That this be a {\it bound state} of M5-branes
-- as opposed to an unstable tachyonic ``ghost'' state --
means to ask whether it is positive as a linear functional on observables
(Def. \ref{States})
and hence whether it is
a quantum state in the precise sense of Def. \ref{NotionOfWeightSystemsThatAreQuantumStates}.
That this is the case is the result of our
Thm. \ref{FundamentalglNWeightSystemsAreQuantumStates}!
-- which we thus may think of as a {\it no-ghost theorem} for
bound M5-brane states.

This establishes the result announced in \cite[\S 3.5]{SS19b}.
\end{itemize}

\noindent {\bf Bound state of $N^{(\mathrm{M5})}$ M5-branes.}
The natural next question to ask is whether $N^{(\mathrm{M5})}$ coincident
M5-branes form bound states, in this same sense, for any $N^{(\mathrm{M5})} \geq 2$,
hence whether the non-fundamental
Lie algebra weight systems $w_{(\mathfrak{gl}(2), \mathbf{N}^{(\mathrm{M5})} )}$
are quantum states on chord diagrams for $N^{(\mathrm{M5})} > 2$.
We hope to discuss this question elsewhere.

\medskip

\medskip

\vspace{.5cm}
\noindent David Corfield, {\it Centre for Reasoning, University of Kent, UK.}
\\
 {\tt d.corfield@kent.ac.uk}
\\
\\
\noindent  Hisham Sati, {\it Mathematics, Division of Science, New York University Abu Dhabi, UAE.}
\\
{\tt hsati@nyu.edu}
\\
\\
\noindent  Urs Schreiber, {\it Mathematics, Division of Science, New York University Abu Dhabi, UAE; on leave from Czech Academy of Science, Prague.}
\\
{\tt us13@nyu.edu}

\end{document}